\documentclass[a4paper,11pt]{article}

\usepackage{amsmath,cases}
\usepackage{amssymb,amsfonts,bm}
\usepackage{amsthm}
\usepackage{dsfont}
\usepackage{color,graphicx,caption,subcaption}
\usepackage[svgnames]{xcolor}
\usepackage{hyperref}
\usepackage{xspace}
\usepackage{fullpage}
\usepackage{placeins}
\usepackage{enumitem}
\usepackage{epstopdf,caption,graphicx,subcaption,mathtools}
\usepackage{url}
\usepackage{verbatim}
\usepackage{amsthm}
\usepackage{commath}
\usepackage{multirow}
\usepackage[ruled]{algorithm2e}
\usepackage{ulem}
\usepackage{booktabs}
\usepackage{float}
\usepackage[makeroom]{cancel}
\usepackage{xpatch}
\xpatchcmd{\paragraph}{\normalfont}{{\normalfont\bfseries}}{}{}
\usepackage{tabularx}

\usepackage{hyperref}

\hypersetup{
  pdffitwindow=false,
  pdfhighlight=/O,
  pdfnewwindow,
  colorlinks=true,
  citecolor=red,             
  linkcolor=blue,            
  menucolor=blue,            
  urlcolor=blue,             
  pdfpagemode=UseOutlines,
  bookmarksnumbered=true,
  linktocpage,
  pdfkeywords={},
  pdfcreator={pdflatex},
  pdfproducer={LaTeX with hyperref}
}

\usepackage{geometry}
\newcommand{\ci}{\mathrm{i}} 

\definecolor{new-blue}{rgb}{0.0,0.0,0.8}

\newcommand{\be}{\begin{equation}}
\newcommand{\ee}{\end{equation}}
\newcommand{\ba}{\begin{array}}
\newcommand{\ea}{\end{array}}
\def\bea{\begin{eqnarray}}
\def\eea{\end{eqnarray}}
\def \beas{\begin{eqnarray*}}
\def \eeas{\end{eqnarray*}}
\newtheorem{definition}{Definition}[section]

\newtheorem{remark}{Remark}[section]

\newtheorem{thm}{Theorem}[section]
\newtheorem{lemma}{Lemma}[section]

\newtheorem{property}{Property}[section]

\DeclareMathOperator*{\argmin}{arg\,min}

\def\<{\langle}
\def\>{\rangle}

\def\ba{{\bf a}}

\usepackage{epsfig,epstopdf,bm}
\input psfig.sty

\begin{document}

\begin{center}
{\Large Structure and symmetry of the Gross--Pitaevskii ground-state manifold\renewcommand{\thefootnote}{\fnsymbol{footnote}}\setcounter{footnote}{0}
 \hspace{-3pt}\footnote{Zixu Feng and Qinglin Tang were partially supported by the National Key R\&D Program of China (Grant No.~2024YFA1012803) and the Natural Science Foundation of Sichuan Province (Grant No.~2024NSFSC0438). Patrick Henning was partially supported by the Deutsche Forschungsgemeinschaft (DFG, German Research Foundation; Grant No.~551527112).}}
\end{center}

\begin{center}
{\large Zixu Feng\footnote[1]{
School of Mathematical Science, Chengdu University of Technology, Chengdu 610059, P. R. China. (zixu\_feng123@163.com).}},
{\large Patrick Henning\footnote[2]{Department of Mathematics, Ruhr-University Bochum, DE-44801 Bochum, Germany. 
		  (patrick.henning@rub.de).}}, and
{\large Qinglin Tang\footnote[3]{School of Mathematics, Sichuan University, Chengdu 6100064, P. R. China. (qinglin\_tang@scu.edu.cn)}}\\[2em]
\end{center}

	\begin{abstract}
		The structure and degeneracy of ground states of the Gross--Pitaevskii energy functional play a central role in both analysis and computation, yet a precise characterization of the ground-state manifold in the presence of symmetries remains a fundamental challenge. In this paper, we establish sharp theoretical results describing the geometric structure of local minimizers and its implications for optimization algorithms. We show that when local minimizers are non-unique, the Morse--Bott condition provides a natural and sufficient criterion under which the ground-state set partitions into finitely many embedded submanifolds, each coinciding with an orbit generated by the intrinsic symmetries of the energy functional, namely phase shifts and spatial rotations. This yields a structural characterization of the ground-state manifold purely in terms of these natural symmetries.
		Building on this geometric insight, we characterize the local convergence behavior of general preconditioned Riemannian gradient methods (P-RG). Under the Morse--Bott condition, we derive sharp local $Q$-linear convergence estimates and prove that the condition holds if and only if the energy sequence generated by P-RG converges locally $Q$-linearly. In particular, on the ground-state set, the Morse--Bott condition is satisfied if and only if the minimizers decompose into finitely many symmetry orbits and the P-RG exhibits local linear convergence in a neighborhood of this set. When the condition fails, we establish a local sublinear convergence rate.
		Taken together, these results provide a complete and precise picture: for the Gross--Pitaevskii minimization problem, the Morse--Bott condition acts as the exact threshold separating linear from sublinear convergence, while simultaneously determining the symmetry-induced structure of the ground-state manifold. Our analysis thus connects geometric structure, symmetry, and convergence behavior in a unified framework.
	\end{abstract}
	
\vspace{6pt}

{\bf Keywords:} 
		Geometric structure, Gross--Pitaevskii energy functional, ground states, Bose--Einstein condensates, Riemannian optimization, Morse--Bott condition

\vspace{6pt}
{\bf MSC codes.} 
		35Q55, 47A75, 49J40, 49R05, 81Q05 
\normalsize
	
	\section{Introduction}
The Gross--Pitaevskii energy functional and the underlying equations form a central mathematical model in quantum physics. They were originally introduced to describe Bose--Einstein condensates (BECs), where a large number of bosonic particles occupy the same quantum state at extremely low temperatures. Due to their ability to capture collective quantum behavior, these models have found applications in several areas, including cold atom physics, nonlinear optics, astrophysical modeling, and the study of quantum fluids and turbulence \cite{1995Observation,2014Intro,2013Quan, K1995Bose,2000Fuzzy,Klaers2010Bose}. For example, related equations appear in nonlinear optics to describe light propagation in nonlinear media, while in astrophysics they are used in models where macroscopic quantum coherence is expected, such as ultra-light dark matter or superfluid phases inside neutron stars. Moreover, the Gross--Pitaevskii equation provides an important framework for investigating vortex dynamics and energy transfer processes in quantum turbulence.

Accordingly, the structure and characterization of minimizers of the Gross–Pitaevskii energy functional are of central importance, both for the mathematical analysis of Bose–Einstein condensates and related quantum systems, and for their reliable numerical computation. From a mathematical standpoint, these minimizers arise from a constrained variational problem under an $L^2$ normalization condition. Following the presentation in the survey by Bao et al.~\cite{2013Mathematical}, the dimensionless Gross--Pitaevskii energy functional in a rotating frame is defined by
\begin{align}\label{GP-Energy}
	E(\phi):=\frac{1}{2}\int_{\mathbb{R}^d}\left(\frac{1}{2}|\nabla\phi|^2+V(\bm{x})|\phi|^2
	-\Omega\overline{\phi} \mathcal{L}_z\phi+F(\rho_{\phi})\right)\text{d}\bm{x},
\end{align}
where $\phi$ denotes the macroscopic wave function describing the quantum state of the condensate. Furthermore, $\bm{x}\in\mathbb{R}^d\ (d=2,3)$ denotes the spatial variable, with $\bm{x}=(x,y)^{\top}$ in two dimensions and $\bm{x}=(x,y,z)^{\top}$ in three dimensions. The trapping potential $V(\bm{x})$ (that confines the particles) is real-valued and satisfies $\lim_{|\bm{x}|\to\infty}V(\bm{x})=\infty$. The rotation component is described by the angular momentum operator $\mathcal{L}_z=-\ci(x\partial_y-y\partial_x)$ together with the rotation frequency $\Omega\ge0$. The notation $\overline{\phi}$ denotes the complex conjugate of $\phi$. The nonlinear particle interaction term is given by
\begin{align}
	\label{F-density-notation}
	F(\rho_{\phi})=\int_{0}^{\rho_{\phi}}f(s)\;\text{d}s,\quad\ \rho_{\phi}:=|\phi|^2,
\end{align}
i.e., a function acting on the particle density $\rho_{\phi}$. The function $f(s)$ frequently appears in forms such as $f(s)=\eta s$, $\eta s\log s$, or $\eta s+\eta_{\mbox{\tiny LHY}} s^{3/2}$, depending on the physical applications, cf. \cite{2021Rota,2006Deri,2020Supp,2019Rota}. The normalization constraint is defined by
\begin{align*}
	N(\phi):=\|\phi\|_{L^2(\mathbb{R}^d)}^2=\int_{\mathbb{R}^d}|\phi|^2\;\text{d}\bm{x}=1
\end{align*}
and represents the normalization of the total particle number (mass). The corresponding ground state wave function $\phi_g$, in a given physical configuration, is therefore characterized by the constrained minimization problem
\begin{align}\label{O-P}
	\phi_g:=\argmin_{\phi\in\mathcal{M}} E(\phi)\quad
	\mbox{with} \quad  \mathcal{M}:=\left\{\phi\in H^1(\mathbb{R}^d) \,\,\big|\,\|\phi\|_{L^2(\mathbb{R}^d)}^2=1\right\}.
\end{align}
The main analytical challenges associated with problem \eqref{O-P} arise from the non-convex constraint and the inherent symmetry properties of the Gross--Pitaevskii functional.
A first symmetry originates from global phase invariance of the energy: If $\phi_g$ is a local minimizer, then $e^{\ci\alpha}\phi_g$ is also a local minimizer for every $\alpha \in [-\pi,\pi)$.
A second symmetry is induced by rotational invariance of the trapping potential: If $V(\bm{x})$ is radially symmetric with respect to the $z$-axis, i.e., $V(\bm{x})=V(A_{\beta}\bm{x})$ for all $\beta \in [-\pi,\pi)$, where
\begin{align*}
	A_{\beta}=\left(\begin{matrix}
		\cos\beta &-\sin\beta\\
		\sin\beta &\cos\beta
	\end{matrix}\right)\ \text{for}\; d=2,\quad A_{\beta}=\left(\begin{matrix}
		\cos\beta &-\sin\beta &0\\
		\sin\beta & \cos\beta &0\\
		0      & 0         &1
	\end{matrix}\right)\ \text{for}\; d=3,
\end{align*}
then $\phi_g(A_{\beta}\bm{x})$ is likewise a local minimizer. These continuous symmetry transformations imply that local minimizers are generally not isolated but instead form families of symmetry-related states, which significantly complicates both theoretical analysis and numerical computation.

Over the past two decades, various approaches have been proposed for computing minimizers of the Gross–Pitaevskii energy functional, including gradient-flow-based methods \cite{2026JCAM,2014Robust, 2017Efficient, 2003Computing, 2006Efficient, 2023Second, 2024On, 2000Ground, 2010A, 2017Computation, 2025OnBest, 2001Optimi, 2020Sobolev, 2023The, 2010Tackling, 2021Normalized, 2025WuLiuCai, 2017A, 2024Anew, 2022Exp, 2019Efficient} and nonlinear eigenvalue solvers \cite{2021The, 2007Dion, 2023The, 2014An}. From an analytical perspective, however, the presence of symmetry-induced degeneracy raises fundamental questions about the structure of the set of minimizers and its impact on optimization methods. In this work, we focus on preconditioned Riemannian gradient methods \cite{2024Riem, 2024On, 2010A, 2017Computation, 2025OnBest, 2020Sobolev, 2023The, 2024Convergence, 2010Tackling,2022Exp,2025Zhang} as a representative class and study how their local behavior is governed by the geometry of the critical set.

From a geometric viewpoint, several rigorous convergence results are available, but they rely on structural assumptions on the set of minimizers that reflect the symmetry-induced degeneracy described above. In particular, local linear convergence of Riemannian gradient methods toward the manifold of ground states generated by phase invariance was recently established in \cite{2024Convergence}, thereby exploiting the fact that the non-uniqueness of minimizers is locally induced by a single continuous symmetry. This perspective was further developed in \cite{2025Zhang}, where several Sobolev gradient variants were analyzed on an appropriate quotient space, again under the assumption that the ground-state manifold is locally described by symmetry orbits arising from phase invariance. More recently, \cite{2025OnBest} treated general preconditioners and obtained explicit 
 convergence estimates 
 by establishing a Polyak--\L ojasiewicz inequality under a structural assumption on the critical set which, in the Gross--Pitaevskii setting, corresponds to the ground-state manifold being locally generated by the natural symmetries of the model, namely phase shifts and possibly spatial rotations. This condition can be interpreted as a Morse--Bott-type assumption tied to symmetry-induced degeneracy.

Despite these advances, the current theory does not provide a complete geometric characterization of when linear convergence should be expected. Existing results show that linear convergence holds under symmetry-driven structural assumptions on the ground-state manifold, but they do not address the more general situation in which the critical set satisfies a Morse--Bott condition in an intrinsic sense, independent of a priori identification with specific symmetry orbits. In particular, it remains unclear whether linear convergence is fundamentally equivalent to such a geometric condition, how the structure of the set of minimizers is related to it, and what convergence behavior should occur when this structure fails. A unified picture linking the geometry of the critical set, the symmetry-induced classification of minimizers, and the precise convergence regime of preconditioned Riemannian gradient methods is therefore still missing.

A recent survey \cite{2025SIAMRev} highlights several open problems related to these phenomena, including whether symmetry-related families of nearly degenerate states can lead to arbitrarily slow convergence, how the entire set of ground states can be systematically classified, and which parameters govern or accelerate convergence in concrete applications. The results developed in this paper provide new insight into these questions by linking the geometric structure of the critical set to the local convergence behavior of Riemannian gradient methods. In particular, the results identify the Morse--Bott condition as the key geometric mechanism that governs both the structure of the ground-state set and the transition between linear and sublinear local convergence rates.
	
The remainder of the paper is organized as follows. Section \ref{Sec2} introduces notation, assumptions, and fundamental properties of the minimization problem together with the relevant aspects of preconditioned Riemannian optimization. In Section \ref{Sec3}, we analyze the geometric structure of the energy landscape and establish its connection with the convergence behavior of the P-RG iteration. 
Concluding remarks are given in Section \ref{Sec6}.
	
	\section{Preliminaries}
	\label{Sec2}
	In this section, we introduce the problem setting, basic notation, and some important properties of the problem and the general class of the Riemannian gradient methods.
	
	\subsection{Problem settings and notations}
	In our analytical framework, the physical domain is truncated from the full space $\mathbb{R}^d$ to the bounded domain $\mathcal{D}$ and the homogeneous Dirichlet boundary condition is imposed on $\partial\mathcal{D}$ due to the trapping potential. On $\mathcal{D}$, we adopt the standard notations for the Lebesgue spaces $L^p(\mathcal{D})=L^p(\mathcal{D},\mathbb{C})$ and the Sobolev space $H^1(\mathcal{D})=H^1(\mathcal{D},\mathbb{C})$ as well as the corresponding norms $\|\cdot\|_{L^p}$ and $\|\cdot\|_{H^1}$. For notational simplicity, we omit the explicit dependence on $\mathcal{D}$ in these norms. Recalling the notation $F(\rho_{\phi})$ from \eqref{F-density-notation}, we then consider the Gross--Pitaevskii energy functional \eqref{GP-Energy} and the constrained optimization problem \eqref{O-P} on $\mathcal{D}$:
	\begin{align}\label{Riem-Opt-Problem}
		\nonumber E(\phi)&:=\frac{1}{2}\int_{\mathcal{D}}\left(\frac{1}{2}|\nabla\phi|^2+V(\bm{x})|\phi|^2
		-\Omega\overline{\phi} \mathcal{L}_z\phi+F(\rho_{\phi})\right)\text{d}\bm{x}\\
		\text{and}\qquad\phi_g&:=\argmin_{\phi\in\mathcal{M}} E(\phi)\quad
		\mbox{with} \quad  \mathcal{M}:=\left\{\phi\in H_0^1(\mathcal{D})\big|\|\phi\|_{L^2}^2=1\right\}.
	\end{align}
	The set $\mathcal{M}$ forms a Riemannian manifold, whose tangent space at $\phi \in \mathcal{M}$ is given by
	\begin{align}\label{Tangent-space}
		T_{\phi}\mathcal{M}:=\left\{v\in H^1_0(\mathcal{D})\;\Big|\;\text{Re}\int_{\mathcal{D}}\phi\overline{v}\;\text{d}\bm{x}=0\right\}.
	\end{align}
	
	Since the Gross--Pitaevskii energy functional \(E\) is real-valued while the wave function \(\phi\) is complex-valued, \(E\) is not complex Fr\'echet differentiable in the usual sense. To address this, we work within a real-linear space consisting of complex-valued functions, as done in \cite{2021The, 2003Semi}. In this setting, the function space is viewed as a real Hilbert space, meaning that all variations are taken with respect to real parameters. To this end, we equip the Lebesgue space \(L^2(\mathcal{D})\) and the Sobolev space \(H^1_0(\mathcal{D})\) with the following real inner products:
	\begin{align*}
		(u,v)_{L^2}:=\text{Re}\int_{\mathcal{D}}u \overline{v}\;\text{d}\bm{x}\qquad \text{and}\qquad (u,v)_{H^1}:=\text{Re}\left(\int_{\mathcal{D}}u \overline{v}\;\text{d}\bm{x}+\int_{\mathcal{D}}\nabla u \cdot \overline{\nabla v}\;\text{d}\bm{x}\right).
	\end{align*}
	The corresponding real dual space is denoted
	by $H^{-1}(\mathcal{D}):=\big(H^1_0(\mathcal{D})\big)^*$. 
	For any $\phi \in H_0^1(\mathcal{D})$, let $\mathcal{P}_{\phi}: H_0^1(\mathcal{D}) \to H^{-1}(\mathcal{D})$ be a symmetric, coercive real-linear preconditioner. It induces a bilinear form $(\cdot,\cdot)_{\mathcal{P}_{\phi}} := \big\langle \mathcal{P}_{\phi}\cdot, \cdot \big\rangle$ where $\langle\cdot,\cdot\rangle$ represents the canonical duality pairing between $H^{-1}(\mathcal{D})$ and $H_0^1(\mathcal{D})$. This bilinear form induces an inner product on $H_0^1(\mathcal{D})$, with the associated norm given by $\|v\|_{\mathcal{P}_{\phi}} := \sqrt{\langle\mathcal{P}_{\phi}v, v\rangle}$. Furthermore, for any closed subset $W \subset T_{\phi}\mathcal{M}$, its orthogonal complement with respect to this inner product is
	\begin{equation}\label{Orth-Comp}
		W^{\bot}_{\mathcal{P}_{\phi}} := \left\{ u \in T_{\phi}\mathcal{M} \,\,\big|\, (u,v)_{\mathcal{P}_{\phi}} = 0 \,\ \forall v \in W \right\}.
	\end{equation}
	Given a subset $\mathcal{U}\subset\mathcal{M}$, its $\sigma$-neighborhood is defined as
	\begin{align}\label{sigma-neighborhood}
		\mathcal{B}_{\sigma}(\mathcal{U}):=\left\{\varphi\in\mathcal{M}\,\big|\,\exists\;\phi\in\mathcal{U}:\,\|\varphi-\phi\|_{H^1}<\sigma\right\}.
	\end{align}
	Throughout the paper, we use two types of constants: $(i)$ Generic constants denoted by $C$, which depend only on $\mathcal{D}$, $d$, $K$, and $V_\infty := \|V\|_{L^\infty}$; $(ii)$ Parameter-dependent constants written as $C_{v_1,\dots,v_k}$, which increase monotonically with the $H^1$-norms of the functions $v_1,\dots,v_k$. In particular, if $\|v_j\|_{H^1} \leq M$, then $C_{v_1,\dots,v_j,\dots,v_k} \leq C_{v_1,\dots,M,\dots,v_k}$.

	Throughout the remainder of the paper, we work under the following standing assumptions.
	\begin{itemize}
		\item[\bf (A1)] $\mathcal{D} \subset \mathbb{R}^d$ is a bounded domain with $C^{1,1}$ boundary, and is rotationally symmetric about the $z$-axis for $d=2,3$, such as a disk for $d=2$ and a ball for $d=3$.
		\item[\bf (A2)] $V\in L^{\infty}(\mathcal{D})$ is rotationally symmetric about the $z$-axis, i.e., $V(\bm{x})=V(A_{\beta}\bm{x})$ for all $\beta\in[-\pi,\pi)$. Moreover, the trapping potential dominates the centrifugal contribution, i.e., there exists a constant $K>0$ such that
		\begin{align*}
			V(\bm{x})-\frac{1+K}{2}\Omega^2(x^2+y^2)\ge0
			\quad \text{for a.e. }\bm{x}\in\mathcal{D}.
		\end{align*}
		\item[\bf (A3)] The nonlinearity $ f : [0,\infty) \to [0,\infty) $ satisfies
		$
		f \in C([0,\infty)) \cap C^1((0,\infty)), f(0) = 0,
		$
		and the limit $ \lim_{s \to 0^+} f'(s^2) s^2 = 0 $ exists. Furthermore, there exists $\theta\in\left[0,3\right)$ such that $f'(s^2)s^2$ is Lipschitz continuous with
		polynomial growth, i.e.,
		\begin{align*}
			\left|f'(s_1^2)s_1^2-f'(s_2^2)s_2^2\right|\le C\left(s_1+s_2\right)^{\theta}|s_1-s_2|,\quad \forall\; s_1, s_2\ge0.
		\end{align*}
		\item[\bf (A4)] Given $\phi\in H_0^1(\mathcal{D})$ and for all $u,v\in H_0^1(\mathcal{D})$,\, $\mathcal{P}_{\phi} : H^1_0(\mathcal{D}) \rightarrow H^{-1}(\mathcal{D})$ satisfies:
		\begin{itemize}
			\item[$(i)$] $\mathcal{P}_{\phi}$ is symmetric, coercive, and continuous on $H_0^1(\mathcal{D})$, i.e.,
			\begin{align*}
				\left\langle \mathcal{P}_{\phi} v, v \right\rangle \ge C\|v\|_{H^1}^2 \;\text{and}\;
				\left\langle \mathcal{P}_{\phi} u, v \right\rangle=	\left\langle \mathcal{P}_{\phi} v, u \right\rangle \le C_{\phi} \|u\|_{H^1} \|v\|_{H^1}.
			\end{align*}
			
			\item[$(ii)$] Given $\psi\in H_0^1(\mathcal{D})$, the following inequality holds
			\begin{align*}
				\left|\left\langle \big(\mathcal{P}_{\phi}-\mathcal{P}_{\psi}\big)u,v\right\rangle\right|\le C_{\phi,\psi}\|u\|_{H^1}\|v\|_{H^1}\|\phi-\psi\|_{H^1}.
			\end{align*}
		\end{itemize}
	\end{itemize}
	We briefly discuss the role and interpretation of the assumptions.
	
	Assumptions {\bf (A1)} and {\bf (A2)} guarantee rotational invariance of the Gross--Pitaevskii energy functional under rotations about the $z$-axis. 
	Due to the normalization constraint, the energy functional already possesses a basic invariance with respect to multiplication by complex phase factors, so that minimizers are never isolated in a strict sense. The additional rotational invariance introduced by {\bf (A1)}--{\bf (A2)} has important analytical consequences: rotating a minimizer produces another minimizer with the same energy, leading to higher-dimensional families of minimizers and additional degeneracies in the second variation of the energy.
	
	Under the assumed $C^{1,1}$ regularity and rotational symmetry of $\mathcal{D}$, these degeneracies admit a precise mathematical characterization. In particular, for a local minimizer $\phi_g$ one has $\mathcal{L}_z \phi_g \in H_0^1(\mathcal{D})$, and the function $\ci \mathcal{L}_z \phi_g$ represents an infinitesimal generator of the rotational symmetry. Consequently, $\ci \mathcal{L}_z \phi_g$ belongs to the tangent space $T_{\phi_g}\mathcal{M}$ and gives rise to a zero eigenfunction of the Riemannian Hessian of $E$ on $\mathcal{M}$ at $\phi_g$. A detailed justification of the inclusion $\mathcal{L}_z \phi_g \in H_0^1(\mathcal{D})$ under these geometric assumptions is provided in Appendix \ref{Appendix_Lz_phi_g}.
	
	We emphasize that the presence of rotational symmetry represents the most delicate setting for the analysis, since it enlarges the kernel of the second variation and introduces further degeneracies. When rotational invariance is absent, for instance due to the geometry of the domain or the external potential, these symmetry-induced degeneracies disappear, and the structure of the minimizer set simplifies accordingly.

	Even when $\mathcal{D}$ lacks rotational symmetry or $C^{1,1}$ regularity (e.g., rectangular domains), numerical evidence suggests that the symmetry-induced structure described above is still effectively observed, provided the computational domain is sufficiently large. 	
	This behavior can be explained by the exponential decay of the ground state $\phi_g$, which renders the numerical solution effectively insensitive to the boundary geometry. In this sense, there exists a rotationally symmetric subdomain $\widetilde{\mathcal{D}}\subset\mathcal{D}$ with $C^{1,1}$-boundary containing the essential support of $\phi_g$, such that $\phi_g$ coincides with a rotationally symmetric ground state on $\widetilde{\mathcal{D}}$ up to exponentially small errors. On this interior subdomain, the inclusion $\ci \mathcal{L}_z \phi_g \in H_0^1(\widetilde{\mathcal{D}})$ holds rigorously, and the corresponding zero-mode structure is therefore accurately captured in numerical computations on the full domain $\mathcal{D}$.
	
	Regarding {\bf (A3)}, we note that the condition $f\ge 0$ 
	can be relaxed to a lower boundedness condition; for clarity, we assume non-negativity. The growth and regularity condition on $f'$ is adapted from the classical work~\cite{2003Semi} and ensures that the energy functional $E$ is twice continuously Fr\'echet differentiable, i.e., $E\in C^2(H_0^1(\mathcal{D}),\mathbb{R})$.
	
	Finally, assumption {\bf (A4)} is not a structural property of the Gross--Pitaevskii model itself, but rather a condition linked to the numerical framework, namely to the preconditioner $\mathcal{P}_\phi$ employed in the Riemannian gradient method. 
	It ensures that $\mathcal{P}_\phi$ induces a stable local metric on $\mathcal{M}$. 
	Following~\cite{2025Other}, we omit the compactness assumption on $\mathcal{P}_\phi$ previously imposed in~\cite[Assumption~(A6)-(iii)]{2025OnBest}, as it is not essential for the analysis of local convergence rates. Condition {\bf (A4)} is satisfied by a wide class of preconditioned Riemannian gradient and projected Sobolev gradient methods in the literature. 
	Moreover, the Lipschitz continuity of the preconditioner required in {\bf (A4)}-(ii) is natural, since the energy functional $E$ is of class $C^2$ and the associated Riemannian gradient is therefore locally Lipschitz continuous. 
	Altogether, assumptions {\bf (A1)}--{\bf (A4)} are standard in numerical simulations and physical experiments, and under {\bf (A1)}--{\bf (A3)} the existence of a minimizer for~\eqref{Riem-Opt-Problem} follows from classical variational arguments (see~\cite{2013Mathematical}).

	\subsection{Properties of the problem}
	Given $\phi\in H_0^1(\mathcal{D})$, we introduce a bounded real-linear operator $\mathcal{H}_{\phi}:H_0^1(\mathcal{D})\to H^{-1}(\mathcal{D})$, for all  $u,v \in H_0^1(\mathcal{D})$
	\begin{align}
		\left\langle\mathcal{H}_{\phi}u,v\right\rangle:=\frac{1}{2}\left(\nabla u,\nabla v\right)_{L^2}+\left(\left(V -\Omega\mathcal{L}_z\right)u,v\right)_{L^2}+\big\langle f(\rho_{\phi})u,v\big\rangle,
	\end{align}
	where 
	$
	\big\langle f(\rho_{\phi})u,v\big\rangle:=\text{Re}\int_{\mathcal{D}}f(\rho_{\phi})u\overline{v}\;\text{d}\bm{x}.
	$
	Then the first and second Fréchet derivatives of the energy functional $E$ can be expressed as
	\begin{align*}
		E'(\phi)=\mathcal{H}_{\phi}\phi\qquad\text{and}\qquad E''(\phi)=\mathcal{H}_{\phi}+f'(\rho_{\phi})\big(|\phi|^2+\phi^2\,\overline{\,\cdot\,}\big).
	\end{align*}
	From a variational perspective, the local minimizer $\phi_g$ satisfies the first-order and second-order necessary conditions:
	\begin{align}
		E'(\phi_g)=\lambda_{\phi_g}\mathcal{I}\phi_g\quad\text{and}\quad 
		\left\langle\big(E''(\phi_g)-\lambda_{\phi_g}\mathcal{I}\big)v,v\right\rangle\ge 0\quad \text{for all}\ v\in T_{\phi_g}\mathcal{M},
	\end{align}
	with $\lambda_{\phi_g}$ being the Lagrange multiplier associated with the $L^2$-normalization constraint and $\mathcal{I}: L^2(\mathcal{D})\to L^2(\mathcal{D})\subset H^{-1}(\mathcal{D})$ the canonical identification $\mathcal{I}v:=(v,\cdot)_{L^2}$. Equivalently,  $\lambda_{\phi_g}=\left\langle\mathcal{H}_{\phi_g}\phi_g,\phi_g\right\rangle$ can be seen as an eigenvalue of the nonlinear eigenproblem $E^{\prime}(\phi)=\lambda \mathcal{I} \phi$ with eigenfunction $\phi_g$.
	In the special case $\Omega=0$ and $f(s)=\eta s,\ \eta\ge0$, and when restricting to real-valued functions, the local minimizer is nondegenerate in the classical sense: the second-order sufficient optimality condition holds, i.e.,
	\begin{align*}
		\left\langle\big(E''(\phi_g)-\lambda_{\phi_g}\mathcal{I}\big)v,v\right\rangle\ge C\|v\|^2_{H^1}\quad \text{for all}\ v\in T_{\phi_g}\mathcal{M}.
	\end{align*}. 
	This condition implies that the local minimizer is isolated. For $\Omega>0$, however, this is no longer true due to symmetry, but we will see that a corresponding coercivity property still holds on a closed subspace of $T_{\phi_g}\mathcal{M}$. 
	Let us next introduce the set of local minimizers at the same energy level as a given local minimizer $\phi_g$: 
	\begin{align}\label{mathcal_S}
		\mathcal{S}:=\Big\{\phi\in \mathcal{M}\;\big|\;\phi\; \text{is a local minimizer and } E(\phi)= E_{\mathcal{S}}:=E(\phi_g)\Big\}.
	\end{align}
	To address symmetry-induced degeneracy, Bott, in his seminal work~\cite{1954Bott}, introduced the notion of nondegenerate critical manifolds, a condition now known as the Morse--Bott condition \cite{2020L-S,2025Fast}. 
	We now recall the precise formulation. 
	\begin{definition}[\textbf{Morse--Bott Condition}]
		Let $ E : \mathcal{M} \to \mathbb{R} $   be a   $C^2$ functional defined on a smooth Riemannian submanifold  $\mathcal{M} \subset X$, where $X$ is a real Hilbert space.
		We say that  $E$  satisfies the local Morse--Bott condition near a local minimizer $\phi_g$ if there exists a sufficiently small $\sigma > 0$ such that the set
		$
		\mathcal{S}_{\sigma}(\phi_g):=\mathcal{S}\cap \mathcal{B}_{\sigma}(\phi_g)
		$
		is a finite-dimensional $C^1$ embedded submanifold of $\mathcal{M}$, and for every $\phi_g' \in \mathcal{S}_{\sigma}(\phi_g)$, we have the identity 
		\begin{align*}
		K_{\phi_g'}\,&:=\,\ker\left( \big(\nabla^{\mathcal{R}}_{X}\big)^2 E(\phi_g') \right)\\
		&:=\,\left\{v\in T_{\phi_g'}\mathcal{M}\;\big|\;\big(\big(\nabla^{\mathcal{R}}_{X}\big)^2 E(\phi_g')v,u\big)_X=0,\;\forall\; u\in T_{\phi_g'}\mathcal{M}\right\} \,=\, T_{\phi_g'} \mathcal{S}_{\sigma}(\phi_g).
		\end{align*}
		Here, $\nabla^{\mathcal{R}}_{X} E(\phi)$ and $\big(\nabla^{\mathcal{R}}_{X}\big)^2 E(\phi)  $ denote respectively the Riemannian gradient and the Riemannian Hessian of $E$ at $\phi$, computed with respect to the Riemannian metric on  $\mathcal{M}$ induced by the ambient Hilbert space $X$.
	\end{definition}
	
	The local Morse--Bott condition stated above is formulated in full generality. In particular, the formulations used in  \cite{2025OnBest,2024Convergence} (also known as quasi-isolated ground states) correspond to specific symmetry-induced scenarios. 
	In those works, the set $\mathcal{S}$ is assumed to be the orbit of $\phi_g$ under symmetry transformations, specifically phase rotations and, in the rotationally invariant setting, spatial rotations. 
	In such cases, the local manifold  $ \mathcal{S}_{\sigma}(\phi_g) $  is precisely the group orbit through $ \phi_g $, which is automatically an embedded submanifold. Moreover, the tangent space at any  $ \phi \in \mathcal{S}_{\sigma}(\phi_g) $ contains the infinitesimal generators of these symmetries; concretely,
	$\operatorname{span}\{\, \ci \phi,\, \ci\mathcal{L}_z \phi \,\} \subseteq T_{\phi} \mathcal{S}_{\sigma}(\phi_g).$
	
	In this work, we go beyond symmetry-induced  manifolds and consider the general setting described by the local Morse--Bott condition above. Crucially, whether this condition holds turns out to be decisive for the convergence behavior and overall efficiency of numerical optimization algorithms. 
	
	We now introduce the  $ \mathcal{P}_{\phi} $-orthogonal complement of  $ K_{\phi} $, denoted by $ R_{\phi} $, i.e.,
	\[
	R_{\phi}:=(K_{\phi})^{\bot}_{\mathcal{P}_{\phi}}=\left\{u\in T_{\phi}\mathcal{M}\;\big|\;(u,v)_{\mathcal{P}_{\phi}}=0,\ \forall\;v\in K_{\phi}\right\}.
	\] 
	Let $ \mathcal{J}_{\phi} : H_0^1(\mathcal{D}) \to R_{\phi} $ denote the  $ \mathcal{P}_{\phi} $-orthogonal projection operator onto  $ R_{\phi} $.
	Under the Morse--Bott condition, for any $\phi_g'\in\mathcal{B}_{\sigma}(\phi_g)$, $E''(\phi_g')-\lambda_{\phi_g'}\mathcal{I}$ is non-degenerate on $ R_{\phi_g'} $. 
	\begin{property}\label{Prop2}
		Let $E$ satisfy the Morse--Bott condition around $\phi_g$. Then, there exists a sufficiently small $\sigma>0$ such that for every $\phi_g' \in \mathcal{S}_{\sigma}(\phi_g)$, the operator $E''(\phi_g')-\lambda_{\phi_g'}\mathcal{I}$ is coercive on $R_{\phi_g'}$, i.e., 
		\begin{align*}
			\big\langle(E''(\phi_g')-\lambda_{\phi_g'}\mathcal{I})v,v\big\rangle\ge C\|v\|^2_{H^1}\quad\ for\ all\ v\in R_{\phi_g'}.
		\end{align*}
	\end{property} 
	\begin{proof}
		The proof is given in Appendix \ref{Appendix_Pro}.
	\end{proof}
	Finally, for any $\phi\in H_0^1(\mathcal{D})$, the important properties of $E(\phi)$ and $E''(\phi)$ are summarized below (cf. \cite[Prop.~2.3]{2025OnBest}).
	\begin{property}\label{Property-of-E''}
		Given $\phi\in H_0^1(\mathcal{D})$ and for all $u, v\in H_0^1(\mathcal{D})$, the following conclusions hold:
		\begin{itemize}[label={}, labelsep=0pt, leftmargin=*]
			\item $(i)$ $E''(\phi)$ is a continuous operator on $H_0^1(\mathcal{D})$, i.e.,
			\begin{align*}
				\left|\big\langle E''(\phi)u,v\big\rangle\right|\le C_{\phi}\|u\|_{H^1}\|v\|_{H^1}.
			\end{align*}
			\item $(ii)$ Given $\psi\in H_0^1(\mathcal{D})$, the following inequality holds
			\begin{align*}
				\left|\left\langle \big(E''(\phi)-E''(\psi)\big)u,v\right\rangle\right|\le C_{\phi,\psi}\|u\|_{H^1}\|v\|_{H^1}\|\phi-\psi\|_{H^1}.
			\end{align*}
			\item $(iii)$ The following Lipschitz-type inequality holds
			\begin{align*}
				E(\phi+v)-E(\phi)\le \big\langle E'(\phi),v\big\rangle+\frac{1}{2}\big\langle E''(\phi)v,v\big\rangle+C_{\phi,v}\|v\|^3_{H^1}.
			\end{align*}
		\end{itemize}
	\end{property}
	
	\subsection{Properties of preconditioned Riemannian gradient iterations} 
    
	For a nondegenerate sequence of step size parameters $\tau_n>0$, 
	a preconditioned Riemannian gradient method takes the form
	\begin{align}\label{Riem-Opt-menthod}
		\phi^{n+1}\,=\,\mathcal{R}_{\phi^n}(\tau_n d_n):=\frac{\phi^n+\tau_n d_n}{\quad\big\|\phi^n+\tau_n d_n\big\|_{L^2}}
	\end{align}
	with the descent direction given by the negative Riemannian gradient in the $\mathcal{P}_{\phi}$-metric:
	\begin{align*}
		d_n:=-\nabla_{\mathcal{P}}^{\mathcal{R}}E(\phi^n)
		=-\mathcal{P}_{\phi^n}^{-1}\mathcal{H}_{\phi^n}\phi^n+\lambda_{\phi^n}\mathcal{P}_{\phi^n}^{-1}\mathcal{I}\phi^n,\quad
		\lambda_{\phi}:=\frac{\big(\phi,\mathcal{P}_{\phi}^{-1}\mathcal{H}_{\phi}\phi\big)_{L^2}}{\big(\phi,\mathcal{P}_{\phi}^{-1}\mathcal{I}\phi\big)_{L^2}}.
	\end{align*}
	Note that the formula exploits that the Riemannian gradient $\nabla_{\mathcal{P}}^{\mathcal{R}}E(\phi)$ is given by 
	\begin{align*}
		\nabla_{\mathcal{P}}^{\mathcal{R}}E(\phi)
		=\text{\normalfont Proj}_{\phi}^{\mathcal{P}_{\phi}} \mathcal{P}^{-1}_{\phi} E^{\prime}(\phi),
		\qquad \mbox{where}
		\quad
		\text{\normalfont Proj}_{\phi}^{\mathcal{P}_{\phi}}(v)
		= v - \frac{ ( \phi , v )_{L^2} }{(\phi, \mathcal{P}_{\phi}^{-1} \mathcal{I} \phi )_{L^2} } \mathcal{P}_{\phi}^{-1} \mathcal{I} \phi.
	\end{align*}
	Although the assumptions on the preconditioner differ from those in \cite{2025OnBest}, the proof of the following basic properties is largely analogous (cf.~\cite[Proposition 3.1]{2025OnBest}); see also Remark~\ref{Diff-Preconditioner}. We briefly recall them here. 
	\begin{property}\label{Property-P}
		Given $\phi\in H_0^1(\mathcal{D})$ and for all $u, v\in H_0^1(\mathcal{D})$ and $w\in H^{-1}(\mathcal{D})$, the following conclusions hold:
		\begin{itemize}[label={}, labelsep=0pt, leftmargin=*]
			\item $(i)$ If $E$ satisfies the Morse--Bott condition around $\phi_g$, then there exists $\sigma>0$ such that for all $\phi \in \mathcal{S}_\sigma(\phi_g)$, the operators $\mathcal{P}_\phi$ and $E''(\phi) - \lambda_\phi \mathcal{I}$ are spectrally equivalent on $R_\phi$, i.e.,
			\begin{align}\label{Mu-L}
				\inf_{v\in R_{\phi} \setminus \{0\}}\frac{\big\langle \big(E''(\phi)-\lambda_{\phi}\mathcal{I}\big)v,v\big\rangle}{\big\langle\mathcal{P}_{\phi}v,v\big\rangle}=\mu>0,\;\,\,\,\,\sup_{v\in R_{\phi} \setminus \{0\}}\frac{\big\langle \big(E''(\phi)-\lambda_{\phi}\mathcal{I}\big)v,v\big\rangle}{\big\langle\mathcal{P}_{\phi}v,v\big\rangle}=L<\infty.
			\end{align}        
			\item $(ii)$ For any $\phi\in\mathcal{M}$, there exists $\sigma>0$ such that for all $\psi\in\mathcal{B}_{\sigma}(\phi)$, the operator $\nabla^{\mathcal{R}}_{\mathcal{P}} E(\cdot):H_0^1(\mathcal{D})\to H_0^1(\mathcal{D})$ and the functional $\lambda_{(\cdot)}:H^1_0(\mathcal{D})\to \mathbb{R}$ are Lipschitz continuous at $\phi$, i.e., 
			\begin{align*}
				\big\|\nabla^{\mathcal{R}}_{\mathcal{P}} E(\phi)-\nabla^{\mathcal{R}}_{\mathcal{P}} E(\psi)\big\|_{H^1}\le C_{\phi}\|\phi-\psi\|_{H^1}\quad \mbox{and}\quad \big|\lambda_{\phi}-\lambda_{\psi}\big|\le C_{\phi}\|\phi-\psi\|_{H^1}.
			\end{align*}
			\item $(iii)$ Let $\phi\in\mathcal{M}$, then for all $v\in T_{\phi}\mathcal{M}$, it holds 
			\begin{align*}
				\big|\mathcal{R}_{\phi}(tv)-(\phi+tv)\big|\le\frac{1}{2}t^2\|v\|^2_{L^2}|\phi+tv|\quad \text{pointwise a.e. in}\ \mathcal{D}.
			\end{align*}
		\end{itemize}
	\end{property}
	\begin{remark}\label{Diff-Preconditioner}
		In contrast to previous work (e.g.,~\cite{2025OnBest,2024Convergence}), we do not assume invariance of the preconditioner under the underlying symmetry group (such as rotations or phase shifts), nor do we impose additional symmetry-based structural assumptions on the critical set. As a consequence, the local constants $L$ and $\mu$ in~\eqref{Mu-L} may depend on the base point $\phi$ along the embedded submanifold. However, our analysis is local in nature. Since the energy functional $E$ is of class $C^2$ and the projection operator $\mathcal{J}_{\phi}$ depends continuously on $\phi \in \mathcal{S}_{\sigma}(\phi_g)$, this variation is well controlled. The continuity of $\mathcal{J}_{\phi}$ follows from the fact that $\mathcal{S}_{\sigma}(\phi_g)$ is a finite-dimensional $C^1$ embedded submanifold, so that the orthogonal projection onto the tangent space $T_{\phi}\mathcal{S}_{\sigma}(\phi_g)$ varies continuously with $\phi$. Combined with the continuity of the preconditioner $\mathcal{P}_{\phi}$, this implies that the projection onto the $\mathcal{P}_{\phi}$-orthogonal complement also depends continuously on $\phi$. Consequently, the constants appearing in our estimates depend only on the size of the neighborhood around the submanifold. Any such dependence can be absorbed into the small parameter $\varepsilon$, which governs both the stepsize restriction and the asymptotic convergence rate. For notational simplicity, we therefore denote all such local bounds uniformly by $L$ and $\mu$.
	\end{remark}
	
	\section{Sharp results on convergence and structure}\label{Sec3} 
	In this section, we establish a set of sharp theoretical characterizations describing the local behavior and geometric structure of minimizers for the Gross--Pitaevskii energy functional.  First, in the case of non-isolated minimizers, we show that the Morse--Bott condition provides a sufficient condition for the set of ground states to be partitioned into finitely many embedded submanifolds, on each of which the energy functional is constant. 
	Second, for the P-RG iterations, we derive its optimal local convergence rate and prove that local $Q$-linear convergence of the energy sequence occurs if and only if the Morse--Bott condition holds, thereby characterizing precisely when such fast rates are attainable. 
	Furthermore, when restricted to the ground state set, the Morse--Bott condition holds if and only if this set decomposes into finitely many symmetry orbits generated by the phase and rotational invariances. In this case, the P-RG iterations exhibit local linear convergence in a neighborhood of the ground state set, thereby connecting the geometric regularity of the critical manifold, the topological classification of minimizers, and the local convergence behavior of the iteration.  
	Finally, when $E$ is real analytic and the Morse--Bott condition fails, the P-RG iterates converge locally at a sublinear rate.
	\subsection{Main results}
	
	\subsubsection{Morse--Bott condition for the finite classification of global minimizers}
	In the presence of continuous symmetries, such as phase invariance and spatial rotations, the Gross--Pitaevskii energy functional typically admits non-unique global minimizers that are related by symmetry transformations. These states organize into continuous families, or orbits, each forming a compact $C^1$ embedded submanifold. A fundamental question arises: under what conditions can all physically distinct global minimizers be cleanly separated into such orbits, so that no two orbits intersect or accumulate arbitrarily close to one another? 
	
	Such a classification is not only essential for a rigorous understanding of the solution landscape, but also crucial for the design of optimization algorithms that aim to recover all relevant physical configurations. In what follows, we introduce a notion of well-defined classification, and show that the Morse--Bott condition provides a sufficient geometric criterion for this property.

	\begin{definition}[\textbf{Well-defined classification}]
		We say that the set of global minimizers $\mathcal{S}_g$ admits a well-defined classification if it can be written as a finite disjoint union of distinct connected symmetry orbits:
		\begin{align*}
			\mathcal{S}_g
			= \bigsqcup_{\ell=1}^N \mathcal{S}_{\phi_{g,\ell}}
			= \bigsqcup_{\ell=1}^N 
			\left\{ \psi \in \mathcal{M} \,\big|\, 
			\psi = e^{\ci \alpha} \phi_{g,\ell}(A_{\beta} \bm{x}),\ 
			\alpha,\beta\in[-\pi,\pi)
			\right\},
		\end{align*}
		for some finite collection of ground state representatives $\phi_{g,\ell}\in\mathcal{S}_g$, with $1 \le \ell \le N$.
	\end{definition}
	This notion of classification captures the idea that global minimizers can be grouped into finitely many geometrically distinct families, each closed under the inherent symmetries of the system. From a computational perspective, it guarantees that the solution landscape is enumerable and resolvable: with a finite number of appropriately chosen initial guesses, one can in principle recover every inequivalent global minimizer using standard optimization algorithms. The following theorem provides a geometric criterion for when such a classification holds. 
	
	\begin{thm}\label{finite-classified-thm}
		Suppose the energy functional $E$ satisfies the Morse--Bott condition in a neighborhood of each symmetry orbit
		\[
		\mathcal{S}_{\phi_g}:=\left\{\psi\in\mathcal{M}\mid
		\psi=e^{\ci\alpha}\phi_g(A_{\beta}(\bm{x})),\;
		\alpha,\beta\in[-\pi,\pi)
		\right\},\quad \phi_g\in\mathcal{S}_g.
		\]
		Then the set of global minimizers $\mathcal{S}_g$ admits a well-defined classification. 
	\end{thm}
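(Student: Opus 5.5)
The strategy is compactness of $\mathcal{S}_g$ combined with local isolation of the orbits, which we get from the Morse--Bott condition. There are four steps: (1) each orbit $\mathcal{S}_{\phi_g}$ is a compact connected set; (2) under the Morse--Bott hypothesis, each orbit is relatively open in $\mathcal{S}_g$; (3) $\mathcal{S}_g$ is compact in $H^1_0(\mathcal{D})$; (4) a compact set covered by disjoint relatively open sets must be covered by finitely many of them.

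\emph{Step 1.} By (A1)--(A2), the map $(\alpha,\beta)\mapsto e^{\ci\alpha}\phi_g(A_\beta\bm{x})$ is continuous from the torus $[-\pi,\pi)^2\cong\mathbb{T}^2$ into $H^1_0(\mathcal{D})$. This follows from the continuity of rotations in $H^1$, via density of smooth functions. The energy $E$ is invariant under this action, so every element of the orbit is again a global minimizer. Hence $\mathcal{S}_{\phi_g}\subset\mathcal{S}_g$, and it is a compact connected set. Two orbits are either equal or disjoint, since the action is by a group. Therefore $\mathcal{S}_g$ is the disjoint union of its orbits.

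\emph{Step 2.} This is the main obstacle. Fix $\phi_g$. By the Morse--Bott condition, $\mathcal{S}_\sigma(\phi_g)$ is a finite-dimensional $C^1$ embedded submanifold. It contains the orbit piece near $\phi_g$, and its tangent space contains $\operatorname{span}\{\ci\phi_g,\ci\mathcal{L}_z\phi_g\}$. We have to show that $\mathcal{S}_\sigma(\phi_g)$ is contained in the orbit, i.e.\ that $\mathcal{S}_g$ coincides with the orbit near $\phi_g$.

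The first idea is a dimension count. Compare $\dim K_{\phi_g}$ with the orbit dimension (1 or 2, depending on whether $\ci\mathcal{L}_z\phi_g$ and $\ci\phi_g$ are independent). If the orbit is an open subset of $\mathcal{S}_\sigma(\phi_g)$, invariance of domain gives local equality. This, however, needs $K_{\phi_g}=\operatorname{span}\{\ci\phi_g,\ci\mathcal{L}_z\phi_g\}$, which is not automatic. I therefore plan to interpret the hypothesis ``Morse--Bott in a neighborhood of each symmetry orbit'' as saying that the critical submanifold near the orbit is the orbit itself, with $T_{\phi}\mathcal{S}_\sigma=K_\phi$ spanned by the symmetry generators.

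The substantive alternative is a Łojasiewicz/Polyak-type argument using Property~\ref{Prop2}. Take $\psi\in\mathcal{S}_g$ close to the orbit and write $\psi=\mathcal{R}_{\phi}(v)$ with $\phi$ the nearest orbit point and $v\in R_\phi$; this parametrization is available via a tubular neighborhood and the implicit function theorem applied to the $C^1$ orbit. A Taylor expansion then gives
\[
E(\psi)-E(\phi)\ge \tfrac{C}{2}\|v\|^2_{H^1}-C_{\phi,v}\|v\|^3_{H^1},
\]
using Property~\ref{Property-of-E''}(iii), Property~\ref{Prop2}, and Property~\ref{Property-P}(iii) to control the retraction error. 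Since $E(\psi)=E(\phi)$, this forces $v=0$, so $\psi$ lies on the orbit. By compactness of the orbit, the constants can be taken uniform along it, which yields a $\delta>0$ with $\mathcal{B}_\delta(\mathcal{S}_{\phi_g})\cap\mathcal{S}_g=\mathcal{S}_{\phi_g}$.

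\emph{Step 3.} Let $(\psi_n)\subset\mathcal{S}_g$. The sequence is bounded in $H^1$ by coercivity of $E$, which follows from (A2) and $F\ge0$. Rellich compactness then gives a subsequence converging strongly in $L^2$ and weakly in $H^1$ to some $\psi$ with $\|\psi\|_{L^2}=1$. Weak lower semicontinuity of $E$ shows that $\psi$ is a minimizer, so $E(\psi_n)\to E(\psi)$. Together with convergence of the lower-order and nonlinear terms, this gives convergence of the quadratic kinetic-rotation form, which is an equivalent norm by (A2) with $K>0$. Hence the convergence is strong in $H^1$, and $\mathcal{S}_g$ is compact.

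\emph{Step 4.} The sets $\mathcal{B}_\delta(\mathcal{S}_{\phi_g})$, $\phi_g\in\mathcal{S}_g$, form an open cover of the compact set $\mathcal{S}_g$, so finitely many of them suffice. By Step 2, each of these neighborhoods meets $\mathcal{S}_g$ only in its own orbit. Hence $\mathcal{S}_g=\bigsqcup_{\ell=1}^N\mathcal{S}_{\phi_{g,\ell}}$, and these orbits are pairwise disjoint by Step 1, which is the well-defined classification.
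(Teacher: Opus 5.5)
Your proposal is correct and follows essentially the same route as the paper. The paper's steps are separability of each orbit, compactness of $\mathcal{S}_g$ via Rellich--Kondrachov and convergence of the energy, and a finite subcover. Separability comes from a Taylor expansion at a nearby orbit point combined with coercivity of $E''-\lambda\mathcal{I}$ on $R_{\phi}$ (Property~\ref{Prop2}). Your caveat that Step~2 needs $K_{\phi}=\operatorname{span}\{\ci\phi,\ci\mathcal{L}_z\phi\}$ is exactly what the paper uses implicitly. It projects $L^2$-orthogonally to $\ci\phi_g'$ and $\ci\mathcal{L}_z\phi_g'$, then invokes coercivity on $R_{\phi_g'}$.

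One small fix: Property~\ref{Property-of-E''}(iii) is only a one-sided (upper) bound. The lower Taylor estimate in Step~2 should instead come from the Lipschitz bound (ii) on $E''$, or simply from $E\in C^2$ with an $o(\|v\|_{H^1}^2)$ remainder, as the paper does.
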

	In other words, the Morse--Bott condition prevents the appearance of infinitely many distinct families of minimizers or accumulation of symmetry orbits, and enforces a finite decomposition of the ground state set into well-separated components. The proof of \textbf{Theorem \ref{finite-classified-thm}} is given in Section \ref{proofs-main-results}.
	
	\subsubsection{Morse--Bott condition and optimal local linear convergence}
	The following theorem establishes sharp local convergence rates for  preconditioned Riemannian gradient (P-RG) iterations. The proof is given in Section \ref{proofs-main-results}.
	
	\begin{thm}\label{Opt-convergence}
		Let $E$ satisfy the Morse--Bott condition around a local minimizer $\phi_g\in\mathcal{S}$. Then, for every sufficiently small $\varepsilon > 0$, there exists $\sigma>0$ such that for all $\phi^0 \in \mathcal{B}_{\sigma}(\phi_g)$, the sequence $\{\phi^n\}_{n\in\mathbb{N}}$ generated by the P-RG converges $Q$-linearly to a local minimizer $\phi_g^*$ (depending on $\phi^0$) and satisfies $\|\phi^*_g-\phi_g\|_{H^1}\le C\sigma$, i.e.,
		\begin{align*}
			\|\phi^n-\phi^*_g\|_{\mathcal{P}_{\phi^*_g}} \,\le\, \|\phi^{n-1}-\phi^*_g\|_{\mathcal{P}_{\phi^*_g}}\left(\max\left\{|1-\tau\mu|,|1-\tau L |\right\}+\varepsilon\right),
		\end{align*}
		for all $\tau\in(0,2/(L+\varepsilon)),\; n\ge1.$
		In particular, with the asymptotically optimal choice $\tau = 2/(L + \mu)$, the P-RG yields the optimal Q-linear convergence rate
		{\rm\begin{align}
				\|\phi^n-\phi^*_g\|_{\mathcal{P}_{\phi^*_g}} \,\le\, \|\phi^{n-1}-\phi^*_g\|_{\mathcal{P}_{\phi^*_g}}\left(\frac{L-\mu}{L+\mu}+\varepsilon\right),\quad\forall\; n\ge1.
		\end{align}}
	\end{thm}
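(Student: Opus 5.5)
The plan is to linearize the P-RG map around the critical manifold and exploit the fact that under the Morse--Bott condition the only degenerate directions are tangent to $\mathcal{S}_\sigma(\phi_g)$.

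\textbf{Step 1: linearization at a point of the manifold.} For $\psi \in \mathcal{S}_\sigma(\phi_g)$, write a nearby iterate as $\phi = \psi + e$ with $e$ small in $H^1$. Decompose $e = k + r$ with $k \in K_\psi = T_\psi\mathcal{S}_\sigma(\phi_g)$ and $r \in R_\psi$, up to a second-order correction coming from the curvature of $\mathcal{M}$. Using the $C^2$ regularity in Property~\ref{Property-of-E''}, the Lipschitz bounds in Property~\ref{Property-P}(ii) and (A4)-(ii), one obtains
\[
\nabla^{\mathcal{R}}_{\mathcal{P}}E(\phi)=\mathcal{P}_\psi^{-1}\big(E''(\psi)-\lambda_\psi\mathcal{I}\big)e+O(\|e\|_{H^1}^2).
\]
Since $K_\psi$ is the kernel, this equals $\mathcal{P}_\psi^{-1}(E''(\psi)-\lambda_\psi\mathcal{I})r + O(\|e\|^2)$, and the right-hand side lies in $R_\psi$ up to the quadratic error. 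The retraction error is quadratic by Property~\ref{Property-P}(iii). Hence
\[
\phi^{n+1}-\psi = k + (I-\tau \mathcal{P}_\psi^{-1}A_\psi) r + O(\|e\|^2), \qquad A_\psi := E''(\psi)-\lambda_\psi\mathcal{I}.
\]
Restricted to $R_\psi$, $\mathcal{P}_\psi^{-1}A_\psi$ is $\mathcal{P}_\psi$-selfadjoint with spectrum in $[\mu,L]$ by Property~\ref{Property-P}(i) and Property~\ref{Prop2}. So $\|(I-\tau\mathcal{P}_\psi^{-1}A_\psi)r\|_{\mathcal{P}_\psi}\le q\|r\|_{\mathcal{P}_\psi}$ with $q=\max\{|1-\tau\mu|,|1-\tau L|\}<1$ for $\tau<2/L$.

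\textbf{Step 2: the normal component contracts.} Choose $\psi_n$ to be the nearest point (in $\mathcal{P}$-metric) of $\mathcal{S}_\sigma(\phi_g)$ to $\phi^n$. This is well defined by a tubular neighbourhood argument, since $\mathcal{S}_\sigma(\phi_g)$ is a finite-dimensional $C^1$ embedded submanifold, and it makes $\phi^n-\psi_n$ essentially normal, i.e. $k=O(\|e\|^2)$. Step 1 then gives
\[
\operatorname{dist}(\phi^{n+1},\mathcal{S}) \le (q+C\operatorname{dist}(\phi^n,\mathcal{S}))\operatorname{dist}(\phi^n,\mathcal{S}).
\]
Remark~\ref{Diff-Preconditioner} controls the variation of $\mu$, $L$ and $\mathcal{P}_\psi$ in $\psi$ and absorbs it into $\varepsilon$. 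Moreover, the step length is $\|\phi^{n+1}-\phi^n\|\le C\operatorname{dist}(\phi^n,\mathcal{S})$.

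\textbf{Step 3: convergence to a limit on the manifold.} The step lengths are therefore geometrically summable, so $\{\phi^n\}$ is Cauchy and converges to some $\phi_g^*$. We have $\operatorname{dist}(\phi_g^*,\mathcal{S})=0$, and $\phi_g^*\in\mathcal{S}$ by closedness of $\mathcal{S}_\sigma$ locally. Summing the series gives $\|\phi_g^*-\phi_g\|_{H^1}\le\|\phi^0-\phi_g\|+C\sigma\le C\sigma$, and induction keeps all iterates inside the neighbourhood where the estimates hold.

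\textbf{Step 4: the stated rate.} Redo Step 1 with base point $\psi=\phi_g^*$ itself. The tangential component must now be handled, and this is the main obstacle. Since $\phi^m\to\phi_g^*$, the tangential part of $\phi^n-\phi_g^*$ equals minus the sum of future tangential increments. Each increment is $O(\|e_m\|^2)$ because the gradient direction is in $R$ up to quadratic error, and $\|e_m\|\lesssim q^{m-n}\|e_n\|$. Hence the tangential part is $O(\|\phi^n-\phi_g^*\|^2)$.

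Consequently $\|\phi^{n+1}-\phi_g^*\|_{\mathcal{P}_{\phi_g^*}}\le(q+C\|\phi^n-\phi_g^*\|)\|\phi^n-\phi_g^*\|_{\mathcal{P}_{\phi_g^*}}$. Choosing $\sigma$ small enough that $C\cdot C\sigma\le\varepsilon$ yields the claimed estimate for all $\tau\in(0,2/(L+\varepsilon))$. Minimizing $q$ over $\tau$ gives $\tau=2/(L+\mu)$ with $q=(L-\mu)/(L+\mu)$.
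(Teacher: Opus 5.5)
Your Step~4 is essentially the paper's proof. The paper also linearizes the P-RG map at the limit $\phi_g^*$ using the Riemannian Hessian $\mathrm{Proj}^{\mathcal{P}_{\phi_g^*}}_{\phi_g^*}\mathcal{P}_{\phi_g^*}^{-1}(E''(\phi_g^*)-\lambda_{\phi_g^*}\mathcal{I})$. It removes the $K_{\phi_g^*}$-component through the telescoping identity \eqref{telescoping-identity} over future increments, combined with linear convergence. It then contracts the $R_{\phi_g^*}$-component with $\mathcal{G}_\tau(\phi_g^*)$, whose norm is $\max\{|1-\tau\mu|,|1-\tau L|\}$ by Lemma~\ref{G-spectrum}.

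The genuine difference is in your Steps~2--3. The paper does not prove linear convergence to a point of $\mathcal{S}$ by contracting the distance to $\mathcal{S}$. It imports it from the Polyak--\L ojasiewicz inequality of Lemma~\ref{P-L-In} together with the descent-based result \cite[Theorem~4.2]{2025OnBest}. Your route is more self-contained. Because you use the Lipschitz bounds of Property~\ref{Property-of-E''}(ii) and \textbf{(A4)}-(ii) to get $O(\|e\|_{H^1}^2)$ rather than $o(\|e\|_{H^1})$ remainders, it yields the factor $q+C\|\phi^n-\phi_g^*\|_{H^1}$ at every step. That justifies ``for all $n\ge 1$'' more cleanly than the paper's asymptotic bookkeeping. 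The paper's route, in turn, reuses the Morse--Bott/P\L{} equivalence it needs anyway for Theorems~\ref{MB-QE} and~\ref{MB-CL}. Its energy argument also covers the whole step-size range with $\sigma$ depending only on $\varepsilon$.

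Two repairs are needed in your version.

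\textbf{The Step~1 expansion is wrong at first order.} The Riemannian gradient contains $-(\lambda_\phi-\lambda_\psi)\mathcal{P}_\phi^{-1}\mathcal{I}\phi$, which is $O(\|e\|_{H^1})$. The correct statement is $\nabla^{\mathcal{R}}_{\mathcal{P}}E(\psi+e)=\mathrm{Proj}^{\mathcal{P}_\psi}_\psi\mathcal{P}_\psi^{-1}A_\psi e+O(\|e\|_{H^1}^2)$. Without the projection, $\mathcal{P}_\psi^{-1}A_\psi$ in general does not map $R_\psi$ into $T_\psi\mathcal{M}$. Also, for $k\in K_\psi$ one only knows $A_\psi k\in\mathrm{span}\{\mathcal{I}\psi\}$. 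So your claims that the linear term equals $\mathcal{P}_\psi^{-1}A_\psi r$, lies in $R_\psi$, and has spectrum in $[\mu,L]$ are not correct as written. With the projection inserted, the operator is $\mathcal{P}_\psi$-self-adjoint on $T_\psi\mathcal{M}$, annihilates $K_\psi$, maps into $R_\psi$, and has Rayleigh quotients in $[\mu,L]$ there. Everything downstream then survives.

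\textbf{Your $\sigma$ depends on $\tau$.} The Step~2 bound $\operatorname{dist}(\phi^{n+1},\mathcal{S})\le(q+C\operatorname{dist}(\phi^n,\mathcal{S}))\operatorname{dist}(\phi^n,\mathcal{S})$ is a contraction only if $C\sigma<1-q$. Since $1-q=\tau\mu$ for small $\tau$, your $\sigma$ shrinks with $\tau$, whereas the theorem fixes $\sigma$ before $\tau$. To fix this, expand $\|e-\tau\nabla^{\mathcal{R}}_{\mathcal{P}}E(\phi^n)\|^2$ in one fixed metric instead of using the triangle inequality. Then, apart from $\|e\|^2$ itself, every term carries a factor $\tau$, and you get a uniform factor $1-c\tau$. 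Alternatively, use the P\L{} route for this part.

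Separately, for a merely $C^1$ submanifold the ``tubular neighbourhood'' does not give uniqueness of nearest points. You only get existence, by local compactness as in Lemma~\ref{P-L-In}, but existence plus the first-order condition is all you actually use.
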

	This result provides a natural generalization of the classical sharp linear convergence theory for gradient descent on strongly convex quadratic problems to a nonconvex setting governed by the Morse--Bott condition. In particular, the contraction factor $(L-\mu)/(L+\mu)$ coincides with the best possible rate known for gradient descent on strongly convex quadratics, which was shown to be optimal in \cite[Theorem~3 in Chapter~1, Section~4]{1987Polyak}. 
	
	The theorem therefore shows that, despite the presence of continuous symmetries and the resulting degeneracy of the Hessian along symmetry directions, the P-RG iterations with general preconditioners retain the same optimal local $Q$-linear convergence behavior as in the classical strongly convex setting where second-order sufficient conditions hold.
	
	The following theorem, again proved in Section \ref{proofs-main-results}, provides a sharp characterization of the Morse--Bott condition in terms of $Q$-linear convergence of the energy for the P-RG iterations.
	\begin{thm}\label{MB-QE}
		The energy functional $E$ satisfies the Morse--Bott condition around a local minimizer $\phi_g\in\mathcal{S}$ \emph{if and only if} for every sufficiently small $\varepsilon>0$, there exist constants $ \rho \in (0,1) $ and $ \sigma > 0 $ such that for all  $ \phi^0 \in \mathcal{B}_\sigma(\phi_g) $, the energy sequence $\{E(\phi^n)\}_{n\in\mathbb{N}}$ generated by the P-RG converges Q-linearly to $E(\phi_g)$, i.e.,
		\[
		E(\phi^{n+1})-E(\phi_g) \,\le\, (\rho+\varepsilon)\big(E(\phi^{n})-E(\phi_g)\big) , \quad \forall\, \tau\in(0,2/(L+\varepsilon)),\; n \ge 0.
		\]
	\end{thm}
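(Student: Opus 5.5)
The equivalence splits into two directions. Both pass through a Polyak--\L ojasiewicz (PL) inequality near $\phi_g$, namely
\begin{align*}
\|\nabla^{\mathcal{R}}_{\mathcal{P}}E(\phi)\|^2_{\mathcal{P}_{\phi}}\ \ge\ 2\mu'\big(E(\phi)-E(\phi_g)\big)\qquad\text{for all }\phi\in\mathcal{B}_\sigma(\phi_g),
\end{align*}
which I intend to show is equivalent both to the Morse--Bott condition and to local $Q$-linear decay of the energy.

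\textbf{Morse--Bott $\Rightarrow$ $Q$-linear energy decay.}
\begin{enumerate}
\item Fix $\phi$ near $\phi_g$ and let $\phi_g'$ be its nearest point on $\mathcal{S}_\sigma(\phi_g)$. Since this set is a finite-dimensional $C^1$ embedded submanifold, such a point exists for $\sigma$ small. Moreover, $\phi-\phi_g'$ is $H^1$-orthogonal to $T_{\phi_g'}\mathcal{S}_\sigma=K_{\phi_g'}$ up to a second-order error.
\item Change this to the $\mathcal{P}_{\phi_g'}$-orthogonal decomposition, so the tangent-space component of $\phi-\phi_g'$ lies in $R_{\phi_g'}$ up to a term $o(\|\phi-\phi_g'\|)$. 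The cost of switching metrics is controlled by the continuity of $\mathcal{J}_\phi$ in Remark~\ref{Diff-Preconditioner}.
\item Expand $E$ and the gradient about $\phi_g'$ using Property~\ref{Property-of-E''}(ii),(iii). This gives
\begin{align*}
E(\phi)-E_{\mathcal{S}}\le \tfrac{L}{2}\|\phi-\phi_g'\|^2_{\mathcal{P}}+o(\cdot),\qquad \|\nabla^{\mathcal{R}}_{\mathcal{P}}E(\phi)\|_{\mathcal{P}}\ge \mu\|\phi-\phi_g'\|_{\mathcal{P}}-o(\cdot),
\end{align*}
using the coercivity of Property~\ref{Prop2} and the spectral equivalence of Property~\ref{Property-P}(i). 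Together these yield the PL inequality with $\mu'\approx\mu^2/L$.
\item Apply the descent lemma for the retraction, using Property~\ref{Property-P}(iii) and the Lipschitz bounds of Property~\ref{Property-P}(ii):
\begin{align*}
E(\phi^{n+1})\le E(\phi^n)-\tau\big(1-\tfrac{\tau(L+\varepsilon)}{2}\big)\|\nabla^{\mathcal{R}}_{\mathcal{P}}E(\phi^n)\|^2_{\mathcal{P}}.
\end{align*}
Combined with PL, this gives a contraction factor $\rho<1$ for every $\tau\in(0,2/(L+\varepsilon))$.
\item Keep the iterates inside $\mathcal{B}_\sigma(\phi_g)$. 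Theorem~\ref{Opt-convergence} does this directly; alternatively, sum the step lengths, which are bounded by $\sqrt{E-E_{\mathcal{S}}}$ and decay geometrically.
\end{enumerate}

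\textbf{$Q$-linear energy decay $\Rightarrow$ Morse--Bott.} This is the harder direction.
\begin{enumerate}
\item \emph{Linear decay gives PL.} Apply the rate at $n=0$ with an arbitrary $\phi^0\in\mathcal{B}_\sigma(\phi_g)$. Because $E(\phi^1)-E(\phi^0)\ge-C\tau\|\nabla^{\mathcal{R}}_{\mathcal{P}} E(\phi^0)\|^2$ by Taylor expansion, we get
\begin{align*}
(1-\rho-\varepsilon)\big(E(\phi^0)-E_{\mathcal{S}}\big)\le C\tau\|\nabla^{\mathcal{R}}_{\mathcal{P}}E(\phi^0)\|^2,
\end{align*}
which is a PL inequality on the whole ball.
\item \emph{PL gives the manifold structure.} This is the main obstacle. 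The plan follows the known equivalence ``PL $\Leftrightarrow$ Morse--Bott'' for $C^2$ functions (Rebjock--Boumal). From PL, critical points in the ball have energy $E_{\mathcal{S}}$, so near $\phi_g$ the set $\mathcal{S}$ coincides with the critical set. Next, take the gradient-flow limit map $\phi\mapsto\phi_\infty$; it is well defined by the finite-length estimate implied by PL (\L ojasiewicz exponent $1/2$) and continuous. Then show that the PL inequality forces the Riemannian Hessian at $\phi_g'$ to satisfy
\begin{align*}
\langle (E''-\lambda\mathcal{I})v,v\rangle\ge 2\mu'\|v\|^2\quad\text{on }(K_{\phi_g'})^\perp.
\end{align*}
To see this, expand PL along $\phi_g'+tv$: it gives $\|Hv\|^2\ge\mu'\langle Hv,v\rangle$, so $H$ restricted to $\ker H^\perp$ has spectrum bounded below by $\mu'$. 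In particular $H$ has closed range and the kernel of $H$ is finite-dimensional, which needs a Fredholm argument using that $E''-\lambda\mathcal{I}$ is a compact perturbation of a coercive operator.
\item \emph{Implicit function theorem.} Apply it to the map $\phi\mapsto \mathcal{J}\nabla^{\mathcal{R}}E(\phi)$ on the normal slice. Its derivative restricted to the range is invertible, so its zero set is a $C^1$ graph over $K_{\phi_g}$ of dimension $\dim K_{\phi_g}$. This zero set contains $\mathcal{S}_\sigma$. PL forces every zero of the projected gradient near $\phi_g$ to be a true critical point, so $\mathcal{S}_\sigma$ equals this graph and $T\mathcal{S}_\sigma=K$.
\end{enumerate}
The delicate points are the infinite-dimensional ones: finite-dimensionality of the kernel via compactness, and the claim that projected-gradient zeros are genuine critical points. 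I would handle the latter by a Lyapunov–Schmidt reduction, in which the reduced function on $K$ satisfies PL with a vanishing Hessian and must therefore be constant.
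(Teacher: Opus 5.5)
Your proposal is correct and follows the paper's skeleton. The Polyak--\L ojasiewicz inequality is the hinge. Sufficiency is PL plus the retraction descent lemma. Necessity starts as in the paper: you test the energy rate at $n=0$ against the Taylor bound $E(\phi^0)-E(\phi^1)\le(\tau+\tfrac{\tau^2}{2}(L+\varepsilon))\|d_0\|^2_{\mathcal{P}_{\phi^0}}$, then invoke PL $\Rightarrow$ Morse--Bott. The differences lie in two sub-arguments.

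\emph{Sufficiency.} You get PL from the separate bounds $E(\phi)-E_{\mathcal{S}}\le\tfrac{L}{2}\|\phi-\phi_g'\|^2_{\mathcal{P}}+o(\cdot)$ and $\|\nabla^{\mathcal{R}}_{\mathcal{P}}E(\phi)\|_{\mathcal{P}}\ge\mu\|\phi-\phi_g'\|_{\mathcal{P}}-o(\cdot)$, which gives the constant $\mu^2/L$. \textbf{Lemma~\ref{P-L-In}} instead keeps the linear term and maximizes $(\nabla^{\mathcal{R}}_{\mathcal{P}}E(\phi),v)_{\mathcal{P}_\phi}-\tfrac{\mu-\varepsilon}{2}\|v\|^2_{\mathcal{P}_\phi}$ over $v$, which gives the sharper $\mu-\varepsilon$. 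Either suffices here. Note that your descent lemma with constant $L+\varepsilon$ on all of $T_\phi\mathcal{M}$ silently uses \textbf{Lemma~\ref{Equal-L}}.

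\emph{PL $\Rightarrow$ Morse--Bott.} Your spectral-gap and Fredholm arguments coincide with \textbf{Lemma~\ref{PL-Co}}, but the manifold construction differs. \textbf{Lemma~\ref{PL-MB}} applies the implicit function theorem to the full pulled-back gradient $G$ and asserts $G(v_1+g(v_1))=0$. However, $\partial_{v_2}G(0,0)$ is only onto $R_{\phi_g'}$, so the theorem really yields zeros of the projected gradient. The paper then proves $\dim K_{\phi_g'}=\dim K_{\phi_g}$ separately, by a Fredholm-index argument with the transport $T^{\phi_g'}_{\phi_g}$. Your Lyapunov--Schmidt step supplies the implication the paper leaves implicit (zeros of the projected gradient are genuine critical points). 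Repeating your construction at each $\phi''\in\mathcal{S}_\sigma$ then gives $T_{\phi''}\mathcal{S}_\sigma=K_{\phi''}$ and constancy of the dimension without any index computation.

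When you write this out, justify two points:
\begin{itemize}
\item Why a $C^2$ reduced function $h$ with $h\ge h(0)$, PL, and $D^2h(0)=0$ is locally constant. PL gives quadratic growth $h-h(0)\gtrsim\operatorname{dist}(\cdot,\{h=h(0)\})^2$. If $0$ were not interior to $\{h=h(0)\}$, nearest-point pairs converging to $0$ would produce a unit direction $u$ with $D^2h(0)(u,u)>0$, a contradiction.
\item Run the implicit function theorem with a $C^1$ gradient, such as the $H_0^1$ one. The critical set and PL are metric-independent up to constants, whereas \textbf{(A4)}-$(ii)$ only makes $\phi\mapsto\mathcal{P}_\phi$ Lipschitz.
\end{itemize}
The gradient-flow limit map you mention is never used and can be dropped.

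One step needs correcting. In sufficiency step 2, passing from $H^1$-orthogonality of $\phi-\phi_g'$ to $K_{\phi_g'}$ to $\mathcal{P}_{\phi_g'}$-orthogonality is an $O(\|\phi-\phi_g'\|_{H^1})$ change, not $o(\cdot)$. Continuity of $\mathcal{J}_\phi$ in the base point says nothing about a change of metric. There are two fixes:
\begin{itemize}
\item Define $\phi_g'$ as the nearest point in the $\mathcal{P}_{\phi_g}$-norm, as the paper does. Then only $\mathcal{P}_{\phi_g}$ versus $\mathcal{P}_{\phi_g'}$ must be compared, which is an $O(\sigma)$ error by \textbf{(A4)}-$(ii)$.
\item Alternatively, note that your step 3 only needs $\|\mathcal{J}_{\phi_g'}(\phi-\phi_g')\|_{\mathcal{P}_{\phi_g'}}\gtrsim\|\phi-\phi_g'\|_{H^1}$, so that the $o(\cdot)$ remainders are absorbed. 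This does hold for the $H^1$-nearest point, by norm equivalence and finite-dimensionality of $K_{\phi_g'}$.
\end{itemize}
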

	This naturally raises the question of whether the symmetry-generated critical manifold itself satisfies the Morse--Bott condition. This question is motivated by extensive numerical evidence in important models such as the BEC model, which consistently indicate that the non-uniqueness of ground states arises from the action of the symmetry group \( U(1) \times SO(2) \), i.e., global phase shifts and spatial rotations, with no further degeneracies observed. 
	
	The following result provides a theoretical explanation for this phenomenon. In particular, it shows that if the Morse--Bott condition holds along the symmetry-generated critical manifolds, then every continuous family of ground states is induced by these symmetries. Under assumptions \textbf{(A1)}--\textbf{(A4)}, this rules out additional bifurcation-type or accidental degeneracies. The following theorem makes this connection precise.

	\begin{thm}\label{MB-CL}
		The energy functional $ E $  satisfies the Morse--Bott condition along every symmetry orbit 
		\[
		\mathcal{S}_{\phi_g} \,:=\,\left\{\psi\in\mathcal{M}\mid
		\psi=e^{\ci\alpha}\phi_g(A_\beta(\bm{x})),\;
		\alpha,\beta\in[-\pi,\pi)
		\right\}, 
		\quad \phi_g\in\mathcal{S}_g.
		\]  
		\emph{if and only if}
		\begin{itemize}
			\item [] $(i)$ The set of global minimizers $\mathcal{S}_g$ admits a well-defined classification.
			\item [] $(ii)$ For every sufficiently small $\varepsilon > 0$, there exist constants  $ \rho \in (0,1) $ and $ \sigma > 0 $ such that for all  $ \phi^0 \in \mathcal{B}_\sigma(\mathcal{S}_g) $, the sequence  $ \{\phi^n\}_{n\in\mathbb{N}}$ generated by the P-RG converges linearly to a ground state $\phi_g^*$, which depends on $\phi^0$ and satisfies $\|\phi^*_g-\phi_g\|_{H^1}\le C\sigma$, i.e.,  
			\[
			\|\phi^n - \phi^*_g\|_{H^1} \,\le \,C_{\varepsilon} \, \|\phi^0 - \phi^*_g\|_{H^1} \, (\rho + \varepsilon)^n, \quad \forall\, \tau\in(0,2/(L+\varepsilon)),\; n \ge 0.
			\] 
		\end{itemize}
		
	\end{thm}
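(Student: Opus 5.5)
We prove the two implications separately. Both lean on Theorem~\ref{finite-classified-thm}, Theorem~\ref{Opt-convergence} and Theorem~\ref{MB-QE}, together with compactness of $\mathcal{S}_g$. Throughout, $\mathcal{S}_g$ is the set of global minimizers. By \textbf{(A1)}--\textbf{(A3)} it is compact in $H^1_0(\mathcal{D})$: a minimizing sequence is bounded in $H^1$, and the Rellich embedding together with convergence of energies yields strong $H^1$ convergence.

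\emph{($\Rightarrow$)} Assume the Morse--Bott condition along every orbit $\mathcal{S}_{\phi_g}$, $\phi_g\in\mathcal{S}_g$. Then item $(i)$ is exactly Theorem~\ref{finite-classified-thm}, so $\mathcal{S}_g=\bigsqcup_{\ell=1}^N\mathcal{S}_{\phi_{g,\ell}}$. For item $(ii)$ we use compactness of each orbit, as the continuous image of the torus $[-\pi,\pi)^2$.
\begin{itemize}
\item[$\bullet$] At each point $\psi\in\mathcal{S}_g$, Theorem~\ref{Opt-convergence} gives a radius $\sigma_\psi>0$ and a $Q$-linear contraction in the $\mathcal{P}_{\phi_g^*}$-norm, with factor $\max\{|1-\tau\mu|,|1-\tau L|\}+\varepsilon$.
\item[$\bullet$] Finitely many balls $\mathcal{B}_{\sigma_{\psi_j}/2}(\psi_j)$ cover $\mathcal{S}_g$. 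Set $\sigma$ to be the minimum of the halved radii, so every $\phi^0\in\mathcal{B}_\sigma(\mathcal{S}_g)$ lies in one of the full balls $\mathcal{B}_{\sigma_{\psi_j}}(\psi_j)$.
\item[$\bullet$] By Remark~\ref{Diff-Preconditioner}, the local constants $\mu,L$ can be taken uniform over this finite cover. This fixes $\rho=\max\{|1-\tau\mu|,|1-\tau L|\}<1$.
\item[$\bullet$] Iterating the contraction gives $\|\phi^n-\phi_g^*\|_{\mathcal{P}_{\phi_g^*}}\le(\rho+\varepsilon)^n\|\phi^0-\phi_g^*\|_{\mathcal{P}_{\phi_g^*}}$. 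We convert to $H^1$ norms using the equivalence in \textbf{(A4)}-(i); since $\|\phi_g^*\|_{H^1}$ is bounded near the compact set $\mathcal{S}_g$, $C_{\phi_g^*}$ is uniformly bounded and can be absorbed into $C_\varepsilon$.
\end{itemize}
The limit $\phi_g^*$ is a local minimizer at the global energy level, hence a ground state, and $\|\phi_g^*-\psi_j\|_{H^1}\le C\sigma$.

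\emph{($\Leftarrow$)} Assume $(i)$ and $(ii)$. Fix an orbit $\mathcal{S}_{\phi_g}$; we must verify the Morse--Bott condition near each $\phi_g'\in\mathcal{S}_{\phi_g}$.
\begin{itemize}
\item[$\bullet$] \emph{Submanifold structure.} By $(i)$ the orbits are finitely many disjoint compact sets, hence at positive mutual distance. For small $\sigma$ we therefore have $\mathcal{S}_g\cap\mathcal{B}_\sigma(\phi_g')=\mathcal{S}_{\phi_g}\cap\mathcal{B}_\sigma(\phi_g')$. Local minimizers near a global minimizer with the same energy are global minimizers, so $\mathcal{S}_\sigma(\phi_g')$ is a piece of the orbit of the smooth action of $U(1)\times SO(2)$. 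That action is smooth on $\phi_g'$ because $\mathcal{L}_z\phi_g'\in H^1_0$ (Appendix~\ref{Appendix_Lz_phi_g}). A compact group orbit is an embedded $C^1$ submanifold, and its tangent space is spanned by $\ci\phi_g'$ and $\ci\mathcal{L}_z\phi_g'$.
\item[$\bullet$] \emph{Kernel identity.} It remains to show $K_{\phi_g'}=T_{\phi_g'}\mathcal{S}_\sigma$. The inclusion $\supseteq$ holds because differentiating $E'(\psi)=\lambda_\psi\mathcal{I}\psi$ along the orbit shows the symmetry generators lie in the kernel.
\item[$\bullet$] \emph{Reverse inclusion, via Theorem~\ref{MB-QE}.} The linear rate in $(ii)$ gives the $Q$-linear energy decay. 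Indeed, $E(\phi^n)-E(\phi_g)\le C\,\mathrm{dist}(\phi^n,\mathcal{S}_g)^2\le C\|\phi^n-\phi_g^*\|_{H^1}^2$, and a sufficient-decrease estimate for P-RG then yields the one-step energy inequality required there. Theorem~\ref{MB-QE} then delivers the Morse--Bott condition.
\end{itemize}

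The main obstacle is this last step: converting $R$-linear convergence of iterates, with constant $C_\varepsilon$, into one-step $Q$-linear energy decay. We would first attempt it through a Polyak--\L ojasiewicz inequality extracted from $(ii)$, namely $E(\phi)-E_{\mathcal{S}}\le C\|\nabla^{\mathcal{R}}_{\mathcal{P}}E(\phi)\|^2$ on $\mathcal{B}_\sigma(\mathcal{S}_g)$. If this fails, the fallback is to argue by contradiction with a vector $v\in K_{\phi_g'}\setminus T\mathcal{S}$. Along such a $v$ the energy is at least quartic-flat, so iterates started at $\phi_g'+tv$ move by only $O(t^3)$ per step. This contradicts the bound $C_\varepsilon t(\rho+\varepsilon)^n$ once one checks that the limit must stay at distance about $t$ from $\phi_g'+tv$.
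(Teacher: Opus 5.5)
Your ($\Rightarrow$) direction is the paper's argument: Theorem~\ref{finite-classified-thm} gives $(i)$ and Theorem~\ref{Opt-convergence} gives $(ii)$, and your finite cover makes $\sigma$ uniform on $\mathcal{B}_\sigma(\mathcal{S}_g)$. That part is fine.

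The gap is in your main route for ($\Leftarrow$). The bound $E(\phi^n)-E(\phi_g)\le C\|\phi^n-\phi_g^*\|_{H^1}^2$ is only an upper bound. Sufficient decrease only gives $E(\phi^n)-E(\phi^{n+1})\ge C_\tau\|\nabla^{\mathcal{R}}_{\mathcal{P}}E(\phi^n)\|^2_{\mathcal{P}_{\phi^n}}$. To combine these into the one-step inequality of Theorem~\ref{MB-QE} (needed already at $n=0$, for every $\phi^0$ near $\phi_g$), you need $E(\phi)-E(\phi_g)\lesssim\|\nabla^{\mathcal{R}}_{\mathcal{P}}E(\phi)\|^2_{\mathcal{P}_\phi}$. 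That is exactly the Polyak--\L ojasiewicz inequality you are trying to reach, and you give no mechanism for extracting it from $(ii)$.

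The paper's mechanism is a kernel-component estimate. The linearized update $\mathrm{Proj}^{\mathcal{P}_{\phi_g^*}}_{\phi_g^*}\mathcal{P}^{-1}_{\phi_g^*}(E''(\phi_g^*)-\lambda_{\phi_g^*}\mathcal{I})$ has range $\mathcal{P}_{\phi_g^*}$-orthogonal to $K_{\phi_g^*}$, so $(I-\mathcal{J}_{\phi_g^*})(\phi^{k+1}-\phi^k)=o(\|\phi^k-\phi_g^*\|_{H^1})$. Summing this telescopically with the uniform $R$-linear bound of $(ii)$ gives $\phi^0-\phi_g^*=\mathcal{J}_{\phi_g^*}(\phi^0-\phi_g^*)+o(\|\phi^0-\phi_g^*\|_{H^1})$. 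By $(i)$, $\phi_g^*$ lies on the orbit, where equivariance gives uniform coercivity of $E''(\phi)-\lambda_{\phi}\mathcal{I}$ on $R_{\phi}$. The Taylor argument of Lemma~\ref{P-L-In} then gives PL, and Lemma~\ref{PL-MB} gives Morse--Bott.

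Your fallback, by contrast, is a sound and more direct route once it is stated correctly. The claim ``$O(t^3)$ per step'' is wrong. Under \textbf{(A3)} only $E\in C^2$ is available, and even for smooth $E$ the gradient at $\mathcal{R}_{\phi_g'}(tv)$ with $v\in K_{\phi_g'}$ is generically of order $t^2$, because the third-order term $E'''(\phi_g')[v,v,\cdot]$ need not vanish on $R_{\phi_g'}$ at a degenerate minimizer. (Start from $\mathcal{R}_{\phi_g'}(tv)$, not $\phi_g'+tv$, which is off $\mathcal{M}$.) What you need, and have, is $\|\nabla^{\mathcal{R}}_{\mathcal{P}}E(\mathcal{R}_{\phi_g'}(tv))\|_{H^1}=o(t)$, which follows from $v\in K_{\phi_g'}$, $E\in C^2$ and \textbf{(A4)}-(ii). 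The contradiction then runs as follows:
\begin{itemize}
\item Fix $N$ with $C_\varepsilon(\rho+\varepsilon)^N\le 1/2$; this uses that the constants in $(ii)$ are uniform in $\phi^0$.
\item Property~\ref{Property-P}-(ii) gives $\|\phi^N-\phi^0\|_{H^1}\le C_N\|\nabla^{\mathcal{R}}_{\mathcal{P}}E(\phi^0)\|_{H^1}=o(t)$.
\item $(ii)$ gives $\|\phi^0-\phi_g^*\|_{H^1}\le\|\phi^0-\phi^N\|_{H^1}+\tfrac12\|\phi^0-\phi_g^*\|_{H^1}$, hence $\|\phi^0-\phi_g^*\|_{H^1}=o(t)$.
\item But $\phi_g^*\in\mathcal{S}_g$, and by $(i)$ the set $\mathcal{S}_g$ near $\phi_g'$ is the orbit, a $C^1$ submanifold whose tangent space does not contain $v$. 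Hence $\mathrm{dist}(\phi^0,\mathcal{S}_g)\ge ct$, a contradiction.
\end{itemize}

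Compared with the paper, this verifies $K_{\phi_g'}\subseteq T_{\phi_g'}\mathcal{S}_\sigma(\phi_g')$ directly. It bypasses PL, the Fredholm-index argument of Lemma~\ref{PL-MB}, and the equivariance step, and it uses $(i)$ only for the submanifold structure and the distance lower bound. The paper's route, in exchange, yields an explicit PL constant. Both rest on the same fact: P-RG moves only negligibly in kernel directions, and the uniform linear rate makes those moves summable. I would present this as the argument rather than as a fallback.
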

	Again, the proof is postponed to Section \ref{proofs-main-results}.

	\subsubsection{Beyond the Morse--Bott condition}
	The previous results rely crucially on the Morse--Bott condition, which ensures that the critical set of the energy functional $E$ is a submanifold and that the Hessian of $E$ is non-degenerate in the normal directions. When the Morse--Bott condition fails, the local geometry of the energy functional \(E\) near local minimizers may exhibit higher-order degeneracies, leading to more intricate dynamical behavior of the P-RG. In particular, by {\bf Theorem~\ref{MB-CL}}, local linear convergence is no longer possible in this setting. Nevertheless, if the nonlinearity \(f\) in \(E\) is real-analytic, which is indeed the case for the Bose--Einstein condensation model since \(f(s) = \eta s\) is linear and hence analytic, then the P-RG iteration still enjoys a weaker but meaningful convergence guarantee: global convergence to a critical point with a sublinear asymptotic rate. This is made precise in the following result, to be proved in Section \ref{proofs-main-results}.
	
	\begin{thm}\label{sublinear-convergence}
		Suppose that the Morse--Bott condition fails for $E$, and that the nonlinearity \(f\) is real-analytic. Then, for every sufficiently small $\varepsilon>0$, there exists $\sigma > 0$ such that for any step size $\tau\in(0,2/(L+\varepsilon))$ and any initial iterate $\phi^0 \in \mathcal{B}_{\sigma}(\text{Crit}(E))$, the sequence $\{\phi^n\}_{n \in \mathbb{N}}$ generated by the P-RG converges to a critical point $\phi_s \in \text{Crit}(E)$. Moreover, the convergence rate is sublinear:
		\[
		\|\phi^n-\phi_s\|_{H^1} \le Cn^{-\frac{\nu}{1-2\nu}}, \quad \forall\; n \ge 0,
		\]
		where $\nu\in(0,1/2)$ is the Łojasiewicz exponent associated with $E$ at the limiting critical point $\phi_s$, independent of $\mathcal{P}_{\phi}$.
	\end{thm}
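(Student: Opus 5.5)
The plan is to reduce Theorem \ref{sublinear-convergence} to the Łojasiewicz gradient inequality together with the standard Absil--Mahony--Andrews / Attouch--Bolte argument for descent methods. The argument has three parts.

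\textbf{Step 1: a Łojasiewicz inequality on $\mathcal{M}$.} Since $f$ is real-analytic, the functional $E$ is real-analytic on $H_0^1(\mathcal{D})$, and so is the constraint map $N$. Near each critical point the Lagrangian $E-\frac{\lambda}{2}N$ has a Fredholm Hessian $E''(\phi_s)-\lambda_{\phi_s}\mathcal{I}$: it is a compact perturbation of the coercive operator $-\frac14\Delta + V - \Omega\mathcal{L}_z+C$. Assumption \textbf{(A2)} gives coercivity of the quadratic part. The Lyapunov--Schmidt reduction of Chill / Feehan--Maridakis then yields a Łojasiewicz--Simon inequality on $\mathcal{M}$. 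Concretely, for every $\phi_s\in\text{Crit}(E)$ there exist $\nu\in(0,1/2]$ and $\sigma_s,c>0$ such that
\begin{align*}
|E(\phi)-E(\phi_s)|^{1-\nu}\le c\,\big\|\nabla^{\mathcal{R}}_{\mathcal{P}}E(\phi)\big\|_{\mathcal{P}_\phi}\quad\forall\,\phi\in\mathcal{B}_{\sigma_s}(\phi_s).
\end{align*}
By \textbf{(A4)}-(i), the $\mathcal{P}_\phi$-norm and the $H^1$-norm of the Riemannian gradient are equivalent. Hence the exponent $\nu$ does not depend on the preconditioner; only $c$ does. The Morse--Bott case is exactly $\nu=1/2$, as in Theorem~\ref{MB-QE}. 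Since the condition fails here, that theorem forces $\nu<1/2$.

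\textbf{Step 2: sufficient decrease and a relative-error bound.} For $\tau<2/(L+\varepsilon)$, Property~\ref{Property-of-E''}(iii) combined with Property~\ref{Property-P}(iii) (the retraction error is $O(\tau^2\|d_n\|^2)$) gives
\[
E(\phi^n)-E(\phi^{n+1})\ge c_1\|d_n\|_{H^1}^2 \quad\text{and}\quad \|\phi^{n+1}-\phi^n\|_{H^1}\le c_2\|d_n\|_{H^1}
\]
in a neighborhood of the critical set. The constants are uniform there, by Remark~\ref{Diff-Preconditioner} and the Lipschitz bounds in Property~\ref{Property-P}(ii).

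\textbf{Step 3: convergence and rate.} Concavity of $s\mapsto s^\nu$ and the two estimates of Step 2 give the Kurdyka telescoping bound
\[
\|\phi^{n+1}-\phi^n\|_{H^1}\le C\big[(E(\phi^n)-E_s)^\nu-(E(\phi^{n+1})-E_s)^\nu\big].
\]
A trapping argument (choosing $\sigma$ small relative to $\sigma_s$) keeps the iterates inside $\mathcal{B}_{\sigma_s}(\phi_s)$. The sequence $\{\phi^n\}$ then has finite length and converges to some $\phi_s\in\text{Crit}(E)$. Next set $e_n=E(\phi^n)-E(\phi_s)$; combining Step 1 and Step 2 gives $e_n-e_{n+1}\ge c\,e_n^{2(1-\nu)}$. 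Since $2(1-\nu)>1$, the standard discrete comparison yields $e_n\le Cn^{-1/(1-2\nu)}$. Summing the telescoping bound from $n$ onward finally gives $\|\phi^n-\phi_s\|_{H^1}\le Ce_n^{\nu}\le Cn^{-\nu/(1-2\nu)}$.

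\textbf{Main obstacle.} I expect Step 1 to be the hardest part, in two respects. The first is verifying the Fredholm and analyticity hypotheses in the real-Hilbert, constrained setting; I would handle this by working in a chart of $\mathcal{M}$ given by the retraction. The second is uniformity in the initial point. The set $\text{Crit}(E)$ near the level is not a priori compact, so I would cover a compact piece of it (compactness coming from the compact embedding $H_0^1\hookrightarrow L^2$ and the elliptic bootstrap) by finitely many Łojasiewicz balls, and take the worst exponent and constant over that cover.
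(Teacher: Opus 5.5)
Your proposal follows the paper's route. It obtains a \L ojasiewicz--Simon inequality from real-analyticity together with the identity-plus-compact (Fredholm index zero) structure of the Hessian, and makes it preconditioner-independent through the norm equivalence in \textbf{(A4)}-$(i)$. This is combined with the sufficient-decrease and step-length estimates for the P-RG iteration, giving whole-sequence convergence and the rate $n^{-\nu/(1-2\nu)}$.

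The only real difference is that you carry out the Kurdyka telescoping and the discrete comparison $e_n-e_{n+1}\ge c\,e_n^{2(1-\nu)}$ explicitly. The paper instead packages the two estimates into a strong-descent inequality and cites an abstract \L ojasiewicz-type convergence theorem from the literature.
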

	Consequently, an important implication is that preconditioning cannot improve the asymptotic convergence order of the P-RG, since the Łojasiewicz exponent (and thus the sublinear rate) does not depend on the preconditioner $\mathcal{P}_{\phi}$. Moreover, this result implies that any P-RG method that is known to converge globally must in fact converge strongly along the entire sequence, rather than only along subsequences. In particular, this applies to the Bose--Einstein condensation model.
	
	\subsection{Technical lemmas}
	Before presenting the proofs of the main results, we introduce several key lemmas that will be instrumental in establishing various aspects of our results. 
	\begin{lemma}\label{Equal-L}
		Given a local minimizer $\phi_g$ of $E$, the following equality holds for any preconditioner $\mathcal{P}_{\phi_g}$:
		\[
		\sup_{v \in R_{\phi_g} \setminus \{0\}}
		\frac{\langle (E''(\phi_g) - \lambda_{\phi_g} \mathcal{I}) v, v \rangle}{\langle \mathcal{P}_{\phi_g} v, v \rangle}=\sup_{v \in T_{\phi_g}\mathcal{M} \setminus \{0\}}
		\frac{\langle (E''(\phi_g) - \lambda_{\phi_g} \mathcal{I}) v, v \rangle}{\langle \mathcal{P}_{\phi_g} v, v \rangle}.
		\]
	\end{lemma}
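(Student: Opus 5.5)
The plan is to decompose every tangent vector along the $\mathcal{P}_{\phi_g}$-orthogonal splitting $T_{\phi_g}\mathcal{M} = K_{\phi_g}\oplus R_{\phi_g}$ and show that the $K_{\phi_g}$-component contributes nothing to the numerator while only enlarging the denominator. Write $A:=E''(\phi_g)-\lambda_{\phi_g}\mathcal{I}$, viewed as the symmetric bilinear form $\langle A u,v\rangle$ on $T_{\phi_g}\mathcal{M}$.

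Since $R_{\phi_g}\subset T_{\phi_g}\mathcal{M}$, the inequality ``$\le$'' is trivial. For ``$\ge$'', take $v\in T_{\phi_g}\mathcal{M}\setminus\{0\}$. Because $K_{\phi_g}$ is a closed subspace and $(\cdot,\cdot)_{\mathcal{P}_{\phi_g}}$ is an equivalent inner product on $H_0^1(\mathcal{D})$ by \textbf{(A4)}, we can write $v = k + r$ with $k\in K_{\phi_g}$ and $r\in R_{\phi_g}$ (closedness of $K_{\phi_g}$ follows from continuity of $A$, Property~\ref{Property-of-E''}(i)). Then
\[
\langle \mathcal{P}_{\phi_g}v,v\rangle=\|k\|^2_{\mathcal{P}_{\phi_g}}+\|r\|^2_{\mathcal{P}_{\phi_g}} .
\]

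The one point to check is that membership in $K_{\phi_g}$, defined through the Riemannian Hessian with respect to the ambient metric $X$, gives $\langle A k,u\rangle=0$ for all $u\in T_{\phi_g}\mathcal{M}$. At a critical point the Riemannian Hessian, tested against tangent vectors, equals $E''(\phi_g)-\lambda_{\phi_g}\mathcal{I}$. The curvature correction is exactly the Lagrange-multiplier term, and it is independent of the metric because $\nabla^{\mathcal{R}}E(\phi_g)=0$. Hence $(\nabla^2 E(\phi_g)k,u)_X=\langle A k,u\rangle$, which vanishes by definition of $K_{\phi_g}$. This is the step I expect to require the most care, namely justifying the identification of the Hessian form on $T_{\phi_g}\mathcal{M}$. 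Given it, $\langle Av,v\rangle=\langle Ar,r\rangle$, since the cross terms and the $k$-term vanish.

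It remains to split into cases. If $r=0$, the quotient is $0$. This is at most the supremum over $R_{\phi_g}$, because that supremum is nonnegative: $A\ge 0$ on $T_{\phi_g}\mathcal{M}$ by the second-order necessary condition. If $r\neq 0$, set $s:=\sup_{R_{\phi_g}}\langle Aw,w\rangle/\|w\|^2_{\mathcal{P}_{\phi_g}}\ge 0$. Then
\[
\frac{\langle Av,v\rangle}{\langle\mathcal{P}_{\phi_g}v,v\rangle}=\frac{\langle Ar,r\rangle}{\|r\|^2_{\mathcal{P}_{\phi_g}}+\|k\|^2_{\mathcal{P}_{\phi_g}}}\le \frac{\langle Ar,r\rangle}{\|r\|^2_{\mathcal{P}_{\phi_g}}}\le s ,
\]
where the first inequality uses $\langle Ar,r\rangle\ge0$. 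Taking the supremum over $v$ gives ``$\ge$''. The degenerate case $R_{\phi_g}=\{0\}$ makes both sides equal to $0$ (with the convention $\sup\emptyset$ read accordingly), and this needs a brief remark.
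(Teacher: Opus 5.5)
Your proof is correct and follows the same route as the paper: you split $v=k+r$ along the $\mathcal{P}_{\phi_g}$-orthogonal decomposition $T_{\phi_g}\mathcal{M}=K_{\phi_g}\oplus R_{\phi_g}$, and the kernel component then drops out of the numerator by symmetry of $E''(\phi_g)-\lambda_{\phi_g}\mathcal{I}$ while adding $\|k\|^2_{\mathcal{P}_{\phi_g}}$ to the denominator. You also invoke the second-order necessary condition to get $\langle (E''(\phi_g)-\lambda_{\phi_g}\mathcal{I})r,r\rangle\ge 0$ and treat the case $r=0$ separately; the paper uses this step only tacitly, and it is needed because enlarging the denominator lowers the quotient only when the numerator is nonnegative.
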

	\begin{proof}
		For any $v \in T_{\phi_g}\mathcal{M}$, we have the decomposition $v = v_1 + v_2$ with $v_1 \in R_{\phi_g}$ and $v_2 \in K_{\phi_g}$. Noting that, by the self-adjointness of $E''(\phi_g) - \lambda_{\phi_g} \mathcal{I}$ with respect to the duality pairing,
		\[
		\frac{\langle (E''(\phi_g) - \lambda_{\phi_g} \mathcal{I}) v, v \rangle}{\langle \mathcal{P}_{\phi_g} v, v \rangle}
		= 
		\frac{\langle (E''(\phi_g) - \lambda_{\phi_g} \mathcal{I}) v_1, v_1 \rangle}{\langle \mathcal{P}_{\phi_g} v_1, v_1 \rangle + \langle \mathcal{P}_{\phi_g} v_2, v_2 \rangle},
		\]
		it follows that
		\[
		\sup_{v \in T_{\phi_g}\mathcal{M} \setminus \{0\}}
		\frac{\langle (E''(\phi_g) - \lambda_{\phi_g} \mathcal{I}) v, v \rangle}{\langle \mathcal{P}_{\phi_g} v, v \rangle}
		\le
		\sup_{v \in R_{\phi_{g}} \setminus \{0\}}
		\frac{\langle (E''(\phi_g) - \lambda_{\phi_g} \mathcal{I}) v, v \rangle}{\langle \mathcal{P}_{\phi_g} v, v \rangle}.
		\]
		Consequently, we obtain
		\[
		\sup_{v \in R_{\phi_{g}} \setminus \{0\}}
		\frac{\langle (E''(\phi_g) - \lambda_{\phi_g} \mathcal{I}) v, v \rangle}{\langle \mathcal{P}_{\phi_g} v, v \rangle}
		=
		\sup_{v \in T_{\phi_g}\mathcal{M} \setminus \{0\}}
		\frac{\langle (E''(\phi_g) - \lambda_{\phi_g} \mathcal{I}) v, v \rangle}{\langle \mathcal{P}_{\phi_g} v, v \rangle}.
		\] 
	\end{proof}
	\begin{lemma}\label{P-L-In}
		Let $E$ satisfy the Morse--Bott condition around $\phi_g\in\mathcal{S}$. Then, for every sufficiently small $\varepsilon>0$, there exists $\sigma>0$ such that for any $\phi\in\mathcal{B}_{\sigma}(\phi_g)$, the Polyak--\L ojasiewicz inequality holds
		\begin{align*}
			E(\phi)-E(\phi_g)\le \frac{1}{2(\mu-\varepsilon)}\left\|\nabla^{\mathcal{R}}_{\mathcal{P}}E(\phi)\right\|^2_{\mathcal{P}_{\phi}}.
		\end{align*}
		
	\end{lemma}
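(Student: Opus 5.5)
The plan is to prove the Polyak--\L ojasiewicz inequality by a local second-order expansion around the \emph{nearest} point of the critical manifold, combined with the coercivity on the normal space from Property~\ref{Prop2} and the spectral equivalence in Property~\ref{Property-P}(i).

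\textbf{Step 1: nearest-point decomposition.} For $\phi\in\mathcal{B}_{\sigma}(\phi_g)$, since $\mathcal{S}_{\sigma}(\phi_g)$ is a finite-dimensional $C^1$ embedded submanifold, a tubular-neighborhood (implicit function) argument gives a point $\phi_g'\in\mathcal{S}_{\sigma'}(\phi_g)$ minimizing $\|\phi-\psi\|_{\mathcal{P}_{\psi}}$ over nearby $\psi$. We then write $\phi=\phi_g'+v+w$ with $v\in R_{\phi_g'}$, $\|w\|_{H^1}\le C\|v\|_{H^1}^2$, where $w$ accounts for the curvature of $\mathcal{M}$ and $\mathcal{S}$. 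The first-order optimality of the projection makes the $K_{\phi_g'}=T_{\phi_g'}\mathcal{S}$ component vanish to first order. Choosing $\sigma$ small keeps $\|v\|_{H^1}\lesssim\sigma$.

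\textbf{Step 2: upper bound on the energy gap.} Since $E(\phi_g')=E(\phi_g)$ and $E'(\phi_g')=\lambda_{\phi_g'}\mathcal{I}\phi_g'$, we use Property~\ref{Property-of-E''}(iii) together with the normalization, noting that $\lambda(\phi,\phi)_{L^2}$ is constant, so the constraint terms produce $-\lambda_{\phi_g'}\mathcal{I}$. This gives
\[E(\phi)-E(\phi_g)\le\tfrac12\langle(E''(\phi_g')-\lambda_{\phi_g'}\mathcal{I})v,v\rangle+C\|v\|^3_{H^1}.\]

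\textbf{Step 3: lower bound on the gradient.} We linearize $\nabla^{\mathcal{R}}_{\mathcal{P}}E(\phi)=\mathcal{P}_{\phi_g'}^{-1}\mathrm{Proj}\,(E''(\phi_g')-\lambda_{\phi_g'}\mathcal{I})v+O(\|v\|^2_{H^1})$, using Property~\ref{Property-P}(ii), Property~\ref{Property-of-E''}(ii) and (A4)(ii). We abbreviate $A:=E''(\phi_g')-\lambda_{\phi_g'}\mathcal{I}$ restricted to the tangent space. The generalized eigenproblem $Au=\kappa\mathcal{P}_{\phi_g'}u$ on $R_{\phi_g'}$ has spectrum in $[\mu,L]$ by Property~\ref{Property-P}(i), and $A$ annihilates $K_{\phi_g'}$. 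Hence
\[\|\mathcal{P}^{-1}Av\|^2_{\mathcal{P}}=\langle Av,\mathcal{P}^{-1}Av\rangle\ge\mu\langle Av,v\rangle.\]
This is the operator inequality $A\mathcal{P}^{-1}A\ge\mu A$, proved via the $\mathcal{P}$-self-adjointness of $\mathcal{P}^{-1}A$ with spectrum in $\{0\}\cup[\mu,L]$. Finally, we replace $\|\cdot\|_{\mathcal{P}_{\phi_g'}}$ by $\|\cdot\|_{\mathcal{P}_{\phi}}$ at cost $O(\sigma)$ using (A4)(ii).

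\textbf{Step 4: absorbing the errors.} Coercivity (Property~\ref{Prop2}) gives $\langle Av,v\rangle\ge C\|v\|^2_{H^1}$. All cubic remainders are therefore bounded by $C\sigma\langle Av,v\rangle$, and for $\sigma$ small relative to $\varepsilon$ we obtain
\[E(\phi)-E(\phi_g)\le\tfrac12(1+c\sigma)\langle Av,v\rangle\le\frac{1}{2(\mu-\varepsilon)}\|\nabla^{\mathcal{R}}_{\mathcal{P}}E(\phi)\|^2_{\mathcal{P}_\phi}.\]
Remark~\ref{Diff-Preconditioner} lets us treat $\mu$ as uniform along $\mathcal{S}_\sigma$.

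\textbf{Main obstacle.} The hardest step is Step~1, together with the claim that the tangential component of $\phi-\phi_g'$ contributes only quadratically. It is precisely here that the Morse--Bott identity $K_{\phi_g'}=T_{\phi_g'}\mathcal{S}$ is needed: without it the kernel directions need not be realized by actual minimizers, and the gradient could be small while the energy gap is not. I would establish Step~1 first via the inverse function theorem applied to the map $(\psi,v)\mapsto\mathcal{R}_\psi(v)$ on the normal bundle $\{(\psi,v):\psi\in\mathcal{S},\,v\in R_\psi\}$, which is a local $C^1$ diffeomorphism onto a neighborhood of $\phi_g$ because its differential at $(\phi_g,0)$ is $(k,r)\mapsto k+r$ on $K_{\phi_g}\oplus R_{\phi_g}=T_{\phi_g}\mathcal{M}$.
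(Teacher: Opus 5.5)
Your argument is correct, but it follows a different route from the paper's.

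\textbf{The paper's route.} It projects $\phi$ onto $\mathcal{S}$ in the \emph{fixed} metric $\mathcal{P}_{\phi_g}$, with existence by compactness of a closed local piece of the finite-dimensional manifold. The first-order condition then gives $\phi-\phi_g'=\mathcal{J}_{\phi_g'}(\phi-\phi_g')+o(\|\phi-\phi_g'\|_{H^1})$. It Taylor-expands $E$ at the \emph{current} point $\phi$: with $h=\mathcal{J}_{\phi_g'}(\phi-\phi_g')$ this gives $E(\phi)-E(\phi_g)\le(\nabla^{\mathcal{R}}_{\mathcal{P}}E(\phi),h)_{\mathcal{P}_\phi}-\frac{\mu-\varepsilon}{2}\|h\|^2_{\mathcal{P}_\phi}$. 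Maximizing over $h$ (Young's inequality) yields the PL bound. This is the classical ``strong convexity in normal directions implies PL'' argument: it uses only the lower Rayleigh bound $\mu$ and never linearizes the gradient map.

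\textbf{Your route.} You expand at the foot point $\phi_g'$ and bound the gap above by $\frac12\langle Av,v\rangle$. You then linearize $\nabla^{\mathcal{R}}_{\mathcal{P}}E$ at $\phi_g'$ and bound it below via $H^2\ge\mu H$ for the $\mathcal{P}_{\phi_g'}$-self-adjoint Riemannian Hessian $H$. Apply this to the projected object $\mathrm{Proj}^{\mathcal{P}_{\phi_g'}}_{\phi_g'}\mathcal{P}^{-1}_{\phi_g'}Av$, not to $\mathcal{P}^{-1}_{\phi_g'}Av$, whose norm is larger and would not bound the Riemannian gradient from below.

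This costs an extra linearization and a little spectral calculus. The linearization goes through with Lipschitz bounds only, via $\mathcal{P}_\phi\nabla^{\mathcal{R}}_{\mathcal{P}}E(\phi)=E'(\phi)-\lambda_\phi\mathcal{I}\phi$ and \textbf{(A4)}-$(ii)$. In return it is more robust: since $A$ annihilates $K_{\phi_g'}$ exactly, the kernel component of $\phi-\phi_g'$ drops out of both the quadratic form and the linearized gradient. So you only need $\|\phi-\phi_g'\|_{H^1}\le C\|v\|_{H^1}$ to absorb the remainders, not your stronger bound $\|w\|_{H^1}\le C\|v\|_{H^1}^2$.

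\textbf{A caution on Step~1.} The inverse-function-theorem construction on the normal bundle is not justified as stated. $\mathcal{S}_\sigma(\phi_g)$ is only $C^1$ and $\psi\mapsto\mathcal{P}_\psi$ is only Lipschitz, so $\psi\mapsto R_\psi$ is merely continuous, and $(\psi,v)\mapsto\mathcal{R}_\psi(v)$ is not known to be $C^1$. Use the compactness-based nearest point instead, preferably in the fixed metric $\mathcal{P}_{\phi_g}$ so that the first-order condition is an honest equation. That bounds the kernel component by $C\sigma\|\phi-\phi_g'\|_{H^1}$, which already gives $\|\phi-\phi_g'\|_{H^1}\le 2\|v\|_{H^1}$, all your argument needs.
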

	\begin{proof}
		For some sufficiently small  $ \sigma > 0 $ , consider the projection of  $ \phi \in \mathcal{B}_{\sigma}(\phi_g) $ onto $ \mathcal{S} $, defined by
		\[
		\phi_g' := \argmin_{u \in \mathcal{S}} \frac{1}{2} \|\phi - u\|_{\mathcal{P}_{\phi_g}}^2.
		\]
		We first establish the existence of  $ \phi_g' $  for sufficiently small  $ \sigma > 0 $. 
		To this end, introduce the $ \mathcal{P}_{\phi_g} $-metric neighborhood
		\[
		\mathcal{B}_{\sigma}^{\mathcal{P}}(\phi_g) := \left\{ \psi \in \mathcal{M} \,\big|\, \|\psi - \phi_g\|_{\mathcal{P}_{\phi_g}} < \sigma \right\}\quad\text{and}\quad\mathcal{S}^{\mathcal{P}}_{\sigma}(\phi_g) := \mathcal{S} \cap \mathcal{B}_{\sigma}^{\mathcal{P}}(\phi_g).
		\]
		Since the energy functional  $ E $  satisfies the Morse--Bott condition around $\phi_g$, there exists $\sigma_0 > 0$  such that
		$ \mathcal{S}^{\mathcal{P}}_{\sigma_0}(\phi_g) $  is a smooth embedded submanifold of  $ \mathcal{M}$, and
		$ E(\phi) \geq E(\phi_g) $  for all  $ \phi \in \mathcal{B}^{\mathcal{P}}_{\sigma_0}(\phi_g)$. Consequently, for any  $ \phi \in \mathcal{B}^{\mathcal{P}}_{\sigma_0}(\phi_g) $, we have  $ E(\phi) = E(\phi_g) $ if and only if $ \phi \in \mathcal{S} $. Now define the local set
		\[
		\overline{\mathcal{S}}^{\mathcal{P}}_{\sigma_0/2}(\phi_g) := \mathcal{S} \cap \overline{\mathcal{B}}^{\mathcal{P}}_{\sigma_0/2}(\phi_g)=E^{-1}(\{ E(\phi_g)\})\cap \overline{\mathcal{B}}^{\mathcal{P}}_{\sigma_0/2}(\phi_g),
		\]    
		this set is closed as the intersection of two closed sets.
		Because  $ \mathcal{S}^{\mathcal{P}}_{\sigma_0/2}(\phi_g) $ is a finite-dimensional embedded $C^1$ submanifold,  
		$\overline{\mathcal{S}}^{\mathcal{P}}_{\sigma_0/2}(\phi_g) $ is compact. For any  $ \phi \in \mathcal{B}^{\mathcal{P}}_{\sigma_0/4}(\phi_g)  $  and any  $ u \in \mathcal{S} \setminus \overline{\mathcal{S}}^{\mathcal{P}}_{\sigma_0/2}(\phi_g)  $ , we estimate
		\[
		\|\phi - u\|_{\mathcal{P}_{\phi_g}} 
		\geq \|u - \phi_g\|_{\mathcal{P}_{\phi_g}} - \|\phi - \phi_g\|_{\mathcal{P}_{\phi_g}} 
		> \frac{\sigma_0}{2} - \frac{\sigma_0}{4} = \frac{\sigma_0}{4} 
		> \|\phi - \phi_g\|_{\mathcal{P}_{\phi_g}}.
		\]
		Hence, the local minimizer of  $ \|\phi - u\|_{\mathcal{P}_{\phi_g}} $  over  $ u \in \mathcal{S} $  must lie in  $\overline{\mathcal{S}}^{\mathcal{P}}_{\sigma_0/2}(\phi_g) $. 
		Since the norm is continuous and  $\overline{\mathcal{S}}^{\mathcal{P}}_{\sigma_0/2}(\phi_g) $ is compact, the minimum is attained. Therefore, by \textbf{(A4)}-$(i)$, there exists sufficiently small $\sigma>0$ such that for all  $ \phi \in \mathcal{B}_{\sigma}(\phi_g) $ , the projection $ \phi_g' $ exists.
		Moreover, since  $\phi_g' \in \mathcal{S}^{\mathcal{P}}_{\sigma_0}(\phi_g) $,
		it satisfies the first-order optimality condition:
		\[
		(\phi - \phi_g', v)_{\mathcal{P}_{\phi_g}} = 0, \quad \forall\, v \in T_{\phi_g'} \mathcal{S}^{\mathcal{P}}_{\sigma_0}(\phi_g).
		\]
		By assumption \textbf{(A4)}-$(ii)$ and the following inequality
		\[
		\|\phi_g' - \phi_g\|_{\mathcal{P}_{\phi_g}} 
		\leq \|\phi - \phi_g\|_{\mathcal{P}_{\phi_g}} + \|\phi - \phi_g'\|_{\mathcal{P}_{\phi_g}} 
		\leq 2 \|\phi - \phi_g\|_{\mathcal{P}_{\phi_g}},
		\]
		we deduce that as  $\sigma \to 0^+$,
		$
		(\phi - \phi_g', v)_{\mathcal{P}_{\phi_g'}} = o(\|\phi - \phi_g'\|_{H^1}), \forall\; v \in T_{\phi_g'} \mathcal{S}^{\mathcal{P}}_{\sigma_0}(\phi_g).
		$
		Combining the following decomposition
		\[
		\phi - \phi_g' = \operatorname{Proj}^{L^2}_{\phi_g'}(\phi - \phi_g') - \frac{1}{2} \|\phi - \phi_g'\|_{L^2}^2 \, \phi_g',
		\]
		we have
		\begin{align}\label{kernel-vanish}
			\phi - \phi_g' = \mathcal{J}_{\phi_g'}(\phi - \phi_g') + o(\|\phi - \phi_g'\|_{H^1}) \quad \text{as } \sigma \to 0^+,
		\end{align}
		where we recall $\mathcal{J}_{\phi_g'}$ as the $\mathcal{P}_{\phi_g'}$-orthogonal projection onto the normal space $R_{\phi_g'}$.
		The remainder of the proof follows verbatim from \cite[Lemma~4.2]{2025OnBest}. Specifically, according to $E(\phi_g')=E(\phi_g)$, Taylor's formula at $\phi$, and \eqref{kernel-vanish}, we have
		
		\begin{align*}
			E&(\phi)-E(\phi_g)=\left\langle E'(\phi),\phi-\phi_g'\right\rangle-\frac{1}{2}\left\langle E''(\phi)(\phi-\phi_g'),\phi-\phi_g'\right\rangle+o\big(\|\phi-\phi_g'\|^2_{H^1}\big)\\
			=&\big( \nabla^{\mathcal{R}}_{\mathcal{P}} E(\phi),\phi-\phi_g'\big)_{\mathcal{P}_{\phi}}\hspace{-0.32cm}-\frac{1}{2}\left\langle \big(E''(\phi)-\lambda_{\phi}\mathcal{I}\big)(\phi-\phi_g'),\phi-\phi_g'\right\rangle+o\big(\|\phi-\phi_g'\|^2_{H^1}\big)\\
			=&\big( \nabla^{\mathcal{R}}_{\mathcal{P}} E(\phi), \mathcal{J}_{\phi_g'}(\phi - \phi_g')\big)_{\mathcal{P}_{\phi}}\hspace{-0.32cm}-\frac{1}{2}\big\langle \big(E''(\phi)-\lambda_{\phi}\mathcal{I}\big) \mathcal{J}_{\phi_g'}(\phi - \phi_g'), \mathcal{J}_{\phi_g'}(\phi - \phi_g')\big\rangle+o\big(\|\phi-\phi_g'\|^2_{H^1}\big).
		\end{align*}
		
		Based on {\bf Property \ref{Property-of-E''}}-$(ii)$, {\bf Property \ref{Property-P}}-$(ii)$, and {\bf (A4)}-$(ii)$, the following estimates hold
		\begin{align*}
			\left\langle \big(E''(\phi)-E''(\phi_g')\big)\mathcal{J}_{\phi_g'}(\phi-\phi_g'),\mathcal{J}_{\phi_g'}(\phi-\phi_g')\right\rangle&=o\big(\|\phi-\phi_g'\|^2_{H^1}\big),\\
			\left\langle \big(\lambda_{\phi_g'}\mathcal{I}-\lambda_{\phi}\mathcal{I}\big)\mathcal{J}_{\phi_g'}(\phi-\phi_g'),\mathcal{J}_{\phi_g'}(\phi-\phi_g')\right\rangle&=o\big(\|\phi- \phi_g' \|^2_{H^1}\big), \\ 
			\left\langle\big(\mathcal{P}_{\phi}-\mathcal{P}_{\phi_g'}\big)\mathcal{J}_{\phi_g'}(\phi-\phi_g'),\mathcal{J}_{\phi_g'}(\phi-\phi_g')\right\rangle&=o\big(\|\phi-\phi_g'\|^2_{H^1}\big).
		\end{align*}
		According to {\bf Property \ref{Property-P}}-$(i)$, the following lower bound estimate holds
		\begin{align*}
			\frac{\left\langle\big( E''(\phi_g')-\lambda_{\phi_g'}\mathcal{I}\big)\mathcal{J}_{\phi_g'}(\phi-\phi_g'),\mathcal{J}_{\phi_g'}(\phi-\phi_g')\right\rangle}{\left\langle\mathcal{P}_{\phi_g'}\mathcal{J}_{\phi_g'}(\phi-\phi_g'),\mathcal{J}_{\phi_g'}(\phi-\phi_g')\right\rangle}\ge\mu.
		\end{align*}
		In summary, the estimate we want is derived
		\begin{align*}
			-\frac{1}{2}&\left\langle \big(E''(\phi)-\lambda_{\phi}\mathcal{I}\big)\mathcal{J}_{\phi_g'}(\phi-\phi_g'),\mathcal{J}_{\phi_g'}(\phi-\phi_g')\right\rangle\\
			&\qquad\qquad\qquad\le-\frac{\mu}{2}\left\langle\mathcal{P}_{\phi}\mathcal{J}_{\phi_g'}(\phi-\phi_g'),\mathcal{J}_{\phi_g'}(\phi-\phi_g')\right\rangle
			+o\big(\|\phi-\phi_g'\|^2_{H^1}\big).
		\end{align*}
		Then, combined with \eqref{kernel-vanish}, we further get
		\begin{align*}
			E(\phi)-E(\phi_g)\le\big(\nabla^{\mathcal{R}}_{\mathcal{P}} E(\phi),\mathcal{J}_{\phi_g'}(\phi-\phi_g')\big)_{\mathcal{P}_{\phi}}\hspace{-0.32cm}-\frac{\mu}{2}\big\|\mathcal{J}_{\phi_g'}(\phi-\phi_g')\big\|_{\mathcal{P}_{\phi}}+o\big(\|\mathcal{J}_{\phi_g'}(\phi-\phi_g')\|^2_{H^1}\big).
		\end{align*}
		By the minimization property of $\phi_g'$, we obtain
		\[
		\|\phi-\phi_g'\|_{H^1}\le C\|\phi-\phi_g\|_{H^1}.
		\]
		Consequently, for all sufficiently small $\varepsilon>0$, there exists $\sigma>0$ such that for any $\phi \in \mathcal{B}_{\sigma}(\phi_g)$, the Polyak--\L ojasiewicz inequality is deduced as follows
		\begin{align*}
			E(\phi)-E(\phi_g)&\le\left(\nabla^{\mathcal{R}}_{\mathcal{P}} E(\phi),\mathcal{J}_{\phi_g'}(\phi-\phi_g')\right)_{\mathcal{P}_{\phi}}-\frac{\mu-\varepsilon}{2}\left(\mathcal{J}_{\phi_g'}(\phi-\phi_g'),\mathcal{J}_{\phi_g'}(\phi-\phi_g')\right)_{\mathcal{P}_{\phi}}\\
			&\le\sup\limits_{v\in H_0^1(\mathcal{D})}\Bigg(\left( \nabla^{\mathcal{R}}_{\mathcal{P}} E(\phi),v\right)_{\mathcal{P}_{\phi}}-\frac{\mu-\varepsilon}{2}(v,v)_{\mathcal{P}_{\phi}}\Bigg)\\
			&=\frac{1}{2(\mu-\varepsilon)}\left\|\nabla^{\mathcal{R}}_{\mathcal{P}} E(\phi)\right\|^2_{\mathcal{P}_{\phi}}.
		\end{align*}
	\end{proof}
	\begin{lemma}\label{PL-Co}
		Let $E$ satisfy the Polyak--\L ojasiewicz inequality around $\phi_g\in\mathcal{S}$, i.e., there exists $\sigma>0$ and a constant $\mu_{PL}>0$ such that for any $\phi\in\mathcal{B}_{\sigma}(\phi_g)$, the following inequality holds
		\begin{align*}
			E(\phi)-E(\phi_g)\le \frac{1}{2\mu_{PL}}\left\|\nabla^{\mathcal{R}}_{\mathcal{P}}E(\phi)\right\|^2_{\mathcal{P}_{\phi}}.
		\end{align*}
		Then, for every  $ \phi_g' \in \mathcal{S}_{\sigma}(\phi_g) $ , the Riemannian Hessian  $ (\nabla_{\mathcal{P}}^{\mathcal{R}})^2 E(\phi_g') $  is uniformly coercive on $ R_{\phi_g'} $ , i.e.,
		\begin{align*}
			\left( (\nabla_{\mathcal{P}}^{\mathcal{R}})^2 E(\phi_g') v, v \right)_{\mathcal{P}_{\phi_g'}} \geq \mu_{\mathrm{PL}} \|v\|_{\mathcal{P}_{\phi_g'}}^2 \quad \text{for all } v \in R_{\phi_g'}.
		\end{align*}
	\end{lemma}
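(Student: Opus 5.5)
The plan is to test the Polyak--\L ojasiewicz inequality along curves leaving $\phi_g'$ in a normal direction $v\in R_{\phi_g'}$ and to compare both sides to second order in the curve parameter. Fix $\phi_g'\in\mathcal{S}_\sigma(\phi_g)$ and $v\in R_{\phi_g'}$ with $\|v\|_{\mathcal{P}_{\phi_g'}}=1$. Consider the retraction curve $\phi(t):=\mathcal{R}_{\phi_g'}(tv)$, which stays in $\mathcal{B}_\sigma(\phi_g)$ for small $|t|$, because $\phi_g'$ lies in the open set $\mathcal{B}_\sigma(\phi_g)$. Property~\ref{Property-P}-$(iii)$ gives $\phi(t)=\phi_g'+tv+O(t^2)$ in $H^1$.

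First, I expand the energy. Since $\phi_g'$ is critical with $E(\phi_g')=E(\phi_g)$, Property~\ref{Property-of-E''}-$(iii)$ together with the usual second-order correction from the constraint gives
\[
E(\phi(t))-E(\phi_g)=\tfrac{t^2}{2}\big\langle (E''(\phi_g')-\lambda_{\phi_g'}\mathcal{I})v,v\big\rangle+o(t^2)=\tfrac{t^2}{2}\big((\nabla^{\mathcal{R}}_{\mathcal{P}})^2E(\phi_g')v,v\big)_{\mathcal{P}_{\phi_g'}}+o(t^2).
\]
Here the Riemannian Hessian in the $\mathcal{P}$-metric at a critical point is $\mathcal{P}_{\phi_g'}^{-1}$ composed with the projected operator $E''-\lambda\mathcal{I}$. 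The term involving $\langle E'(\phi_g'),\cdot\rangle$ acting on the $O(t^2)$ normal correction $-\tfrac{t^2}{2}\|v\|_{L^2}^2\phi_g'$ is exactly what produces the $-\lambda_{\phi_g'}\mathcal{I}$ shift.

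Second, I expand the gradient. Using Property~\ref{Property-P}-$(ii)$ and the $C^2$ regularity of $E$ from Property~\ref{Property-of-E''}, the map $\psi\mapsto\nabla^{\mathcal{R}}_{\mathcal{P}}E(\psi)$ is differentiable along the curve at $\phi_g'$, where the gradient vanishes. Its derivative there is the Riemannian Hessian, so
\[
\nabla^{\mathcal{R}}_{\mathcal{P}}E(\phi(t))=t\,(\nabla^{\mathcal{R}}_{\mathcal{P}})^2E(\phi_g')v+o(t).
\]
By (A4)-$(ii)$, $\|\cdot\|_{\mathcal{P}_{\phi(t)}}^2=\|\cdot\|^2_{\mathcal{P}_{\phi_g'}}(1+O(t))$. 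Writing $H:=(\nabla^{\mathcal{R}}_{\mathcal{P}})^2E(\phi_g')$, which is self-adjoint in $(\cdot,\cdot)_{\mathcal{P}_{\phi_g'}}$ on $T_{\phi_g'}\mathcal{M}$, and inserting both expansions into the PL inequality, dividing by $t^2$ and letting $t\to0$ gives
\[
\mu_{PL}\,(Hv,v)_{\mathcal{P}_{\phi_g'}}\le \tfrac12\|Hv\|^2_{\mathcal{P}_{\phi_g'}}\cdot\tfrac{1}{1}\quad\text{i.e.}\quad 2\mu_{PL}\,(Hv,v)\le \|Hv\|^2 .
\]
This holds for all $v\in R_{\phi_g'}$.

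Third, I pass from this quadratic inequality to coercivity. $H$ annihilates $K_{\phi_g'}$ by definition and maps $R_{\phi_g'}$ into itself by self-adjointness. Since $\phi_g'$ is a local minimizer, $H\ge0$, and $H$ is bounded by Property~\ref{Property-of-E''}-$(i)$ and (A4). By the spectral theorem for the bounded self-adjoint nonnegative operator $H|_{R_{\phi_g'}}$, take its spectral measure $dE_\lambda$ for a unit vector $v$. The inequality becomes $\int(2\mu_{PL}\lambda-\lambda^2)\,d\langle E_\lambda v,v\rangle\le0$ for every $v$. Choosing $v$ in the spectral subspace of $(0,2\mu_{PL})$ would make the integrand strictly positive, so that subspace is trivial. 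The spectral subspace of $\{0\}$ is $\ker H\cap R_{\phi_g'}=K_{\phi_g'}\cap R_{\phi_g'}=\{0\}$. Hence $\sigma(H|_{R})\subset[2\mu_{PL},\infty)$, which is stronger than the claimed $(Hv,v)\ge\mu_{PL}\|v\|^2$.

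The main obstacle is the second step: justifying the first-order expansion of the preconditioned Riemannian gradient at a merely $C^2$ energy with a $\phi$-dependent preconditioner. I would handle it by writing $\nabla^{\mathcal{R}}_{\mathcal{P}}E(\psi)=\mathrm{Proj}^{\mathcal{P}_\psi}_\psi\mathcal{P}_\psi^{-1}E'(\psi)$ and differentiating term by term. Every term in which $\mathcal{P}$ or the projection is differentiated is multiplied by $E'(\phi_g')-\lambda_{\phi_g'}\mathcal{I}\phi_g'=0$, so it contributes only $o(t)$ thanks to the Lipschitz bounds in (A4)-$(ii)$ and Property~\ref{Property-P}-$(ii)$.

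If the spectral argument in the third step is deemed too heavy, I would instead apply the inequality to $v$ chosen as approximate minimizers of the Rayleigh quotient of $H$ on $R$.
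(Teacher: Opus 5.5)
Your proof is correct up to an arithmetic slip. Its first two steps match the paper, but the final step takes a different route. Dividing $\tfrac{t^2}{2}(Hv,v)_{\mathcal{P}_{\phi_g'}}+o(t^2)\le\tfrac{t^2}{2\mu_{PL}}\|Hv\|^2_{\mathcal{P}_{\phi_g'}}+o(t^2)$ by $t^2/2$ gives $\mu_{PL}(Hv,v)\le\|Hv\|^2$, not $2\mu_{PL}(Hv,v)\le\|Hv\|^2$. Your spectral argument therefore yields $\sigma(H|_{R_{\phi_g'}})\subset[\mu_{PL},\infty)$, which is exactly the claimed bound and nothing stronger. The bound $2\mu_{PL}$ is false in general: $E(x)=\tfrac a2 x^2$ satisfies the PL inequality with $\mu_{PL}=a$, and its Hessian is also $a$. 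A smaller point: Property~\ref{Property-of-E''}-$(iii)$ is a one-sided upper bound, whereas you need a lower bound on $E(\phi(t))-E(\phi_g)$. That lower bound comes from the two-sided $C^2$ Taylor expansion, which your equality sign already tacitly uses.

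Up to the inequality $\mu_{PL}(Hv,v)\le\|Hv\|^2$ you follow the paper: the retraction curve $\mathcal{R}_{\phi_g'}(tv)$, the second-order energy expansion, and the first-order gradient expansion. Your justification of the gradient expansion, using $E'(\phi_g')=\lambda_{\phi_g'}\mathcal{I}\phi_g'$ and the Lipschitz bound in \textbf{(A4)}-$(ii)$, is more careful than the paper's one-line assertion.

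The paper's final step works as follows:
\begin{itemize}
\item It first passes to the $H_0^1$-metric and writes the Hessian as $\tfrac12 I$ plus a compact operator, so it is Fredholm of index zero with finite-dimensional kernel.
\item From this it deduces an a priori bound $\mu_{\phi_g'}>0$ on $R_{\phi_g'}$.
\item It then combines $\mu_{PL}(Hv,v)\le\|Hv\|^2$ with $(Hv,v)\ge\mu_{\phi_g'}\|v\|^2$ and $\inf\operatorname{Spec}(H^2|_{R_{\phi_g'}})=\mu_{\phi_g'}^2$ to get $\mu_{PL}\le\mu_{\phi_g'}$.
\end{itemize}

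Your spectral-measure argument needs no compactness. The PL-derived inequality itself excludes spectral mass in $(0,\mu_{PL})$, and $K_{\phi_g'}\cap R_{\phi_g'}=\{0\}$ excludes the eigenvalue $0$. So you get positivity and the sharp constant in one stroke, for any bounded nonnegative self-adjoint Hessian. What the paper's route buys in addition is finite dimensionality of $K_{\phi_g'}$. That is not part of this statement, but it is invoked later in the proof of Lemma~\ref{PL-MB}.

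Keep the spectral-subspace argument rather than your fallback via approximate minimizers of the Rayleigh quotient. The fallback works only once the bottom $m$ of the spectrum of $H|_{R_{\phi_g'}}$ is known to be positive, since then approximate minimizers are approximate eigenvectors and give $\mu_{PL}m\le m^2$. If $m=0$ it yields only $0\le 0$, which is precisely why the paper needs its Fredholm step.
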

	
	\begin{proof}
		The Riemannian Hessian at  $ \phi_g'\in\mathcal{B}_{\sigma}(\phi_g) $  is given by
		\[
		(\nabla_{\mathcal{P}}^{\mathcal{R}})^2 E(\phi_g') 
		= \mathrm{Proj}_{\phi_g'}^{\mathcal{P}_{\phi_g'}} \, \mathcal{P}_{\phi_g'}^{-1} \big( E''(\phi_g') - \lambda_{\phi_g'} \mathcal{I} \big) \big|_{T_{\phi_g'} \mathcal{M}}.
		\]
		Observe that the kernel $ K_{\phi_g'} $  is independent of the choice of the Riemannian metric. Indeed, for any  $ v, u \in T_{\phi_g'} \mathcal{M} $,
		\[
		\big( (\nabla_{\mathcal{P}}^{\mathcal{R}})^2 E(\phi_g') v, u \big)_{\mathcal{P}_{\phi_g'}} 
		= \big\langle (E''(\phi_g') - \lambda_{\phi_g'} \mathcal{I}) v, u \big\rangle.
		\]
		Hence, to study the kernel $K_{\phi_g'}$, we may choose the convenient  $ H_0^1 $-metric, i.e.,  $ \mathcal{P}_{\phi_g'} = -\Delta $. Under this choice, the Riemannian Hessian becomes
		\begin{align*}
			&(\nabla_{H_0^1}^{\mathcal{R}})^2 E(\phi_g')
			= \mathrm{Proj}_{\phi_g'}^{H_0^1} \, (-\Delta)^{-1} \big( E''(\phi_g') - \lambda_{\phi_g'} \mathcal{I} \big) \big|_{T_{\phi_g'} \mathcal{M}} \\
			&= \mathrm{Proj}_{\phi_g'}^{H_0^1} \left( \tfrac{1}{2} I 
			+ (-\Delta)^{-1} \Bigl( V - \Omega \mathcal{L}_z + f(\rho_{\phi_g'}) 
			+ f'(\rho_{\phi_g'}) \bigl( |\phi_g'|^2 + (\phi_g')^2 \overline{\cdot} \bigr) 
			- \lambda_{\phi_g'} \mathcal{I} \Bigr) \right) \big|_{T_{\phi_g'} \mathcal{M}} \\
			&=: \tfrac{1}{2} I \big|_{T_{\phi_g'} \mathcal{M}} + \mathcal{A}_{\phi_g'} \big|_{T_{\phi_g'} \mathcal{M}}.
		\end{align*}
		The operator $\mathcal{A}_{\phi_g'}$ is compact due to 
		\begin{align*}
			\|\mathcal{A}_{\phi_g'}v\|^2_{H_0^1}&=\left\langle\Bigl( V - \Omega \mathcal{L}_z + f(\rho_\phi) + f'(\rho_\phi)\bigl(|\phi|^2 + \phi^2 \overline{\cdot}\,\bigr)-\lambda_{\phi_g'}\mathcal{I} \Bigr)v,\mathcal{A}_{\phi_g'}v\right\rangle\\
			&\le C\left(\|v\|_{L^2}+\|v\|_{L^{6/(4-\theta)}}\right)\|\mathcal{A}_{\phi_g'}v\|_{H^1_0},\quad \forall\; v\in T_{\phi_g'}\mathcal{M}.
		\end{align*}
		Therefore, the $H_0^1$-Riemannian Hessian is a compact perturbation of the identity operator and hence Fredholm of index zero. Its kernel $ K_{\phi_g'}$ is finite-dimensional.
		
		As in the proof of \cite[Proposition~2.2]{2025OnBest}, we now establish pointwise coercivity on  $ R_{\phi_g'}$
		\[
		\mu_{\phi_g'} := \inf_{v \in R_{\phi_g'} \setminus \{0\}} 
		\frac{\big( (\nabla_{\mathcal{P}}^{\mathcal{R}})^2 E(\phi_g') v, v \big)_{\mathcal{P}_{\phi_g'}}}
		{\|v\|_{\mathcal{P}_{\phi_g'}}^2}
		= \inf_{v \in R_{\phi_g'} \setminus \{0\}} 
		\frac{\big\langle (E''(\phi_g') - \lambda_{\phi_g'} \mathcal{I}) v, v \big\rangle}
		{\|v\|_{\mathcal{P}_{\phi_g'}}^2} > 0.
		\]
		For any  $ v \in R_{\phi_g'} $  with  $ \|v\|_{\mathcal{P}_{\phi_g'}} = 1 $  and sufficiently small  $ t \in \mathbb{R} $  such that  $ \phi = \mathcal{R}_{\phi_g'}(t v) \in \mathcal{B}_{\sigma}(\phi_g) $ , the Taylor expansion yields
		\[
		E(\phi) - E(\phi_g') 
		= \frac{t^2}{2} \big( (\nabla_{\mathcal{P}}^{\mathcal{R}})^2 E(\phi_g') v, v \big)_{\mathcal{P}_{\phi_g'}} + o(t^2).
		\]
		Since the Polyak--\L ojasiewicz inequality holds by assumption, we have
		\[
		E(\phi) - E(\phi_g') 
		= E(\phi) - E(\phi_g) 
		\le \frac{1}{2\mu_{PL}} \big\| \nabla^{\mathcal{R}}_{\mathcal{P}} E(\phi) \big\|_{\mathcal{P}_{\phi}}^2.
		\]
		Noting that  $ \nabla^{\mathcal{R}}_{\mathcal{P}} E(\phi) = t (\nabla_{\mathcal{P}}^{\mathcal{R}})^2 E(\phi_g') v + o(t) $, we obtain
		\[
		\frac{t^2}{2} \big( (\nabla_{\mathcal{P}}^{\mathcal{R}})^2 E(\phi_g') v, v \big)_{\mathcal{P}_{\phi_g'}} + o(t^2)
		\le \frac{t^2}{2\mu_{PL}} \left\| (\nabla_{\mathcal{P}}^{\mathcal{R}})^2 E(\phi_g') v \right\|_{\mathcal{P}_{\phi_g'}}^2 + o(t^2).
		\]
		Dividing both sides by  $ t^2/2 $, letting  $ t \to 0 $, and noting that  $ (\nabla_{\mathcal{P}}^{\mathcal{R}})^2 E(\phi_g') $  is a bounded self-adjoint operator, we obtain
		\begin{align*}
			\mu_{PL}&\le\frac{1}{\mu_{\phi_g'}}\inf_{v\in R_{\phi_g'}\backslash\{0\}}\frac{\big( (\nabla_{\mathcal{P}}^{\mathcal{R}})^2 E(\phi_g')v,(\nabla_{\mathcal{P}}^{\mathcal{R}})^2 E(\phi_g')v\big)_{\mathcal{P}_{\phi_g'}}}{(v,v)_{\mathcal{P}_{\phi_g'}}}\\
			&=\frac{1}{\mu_{\phi_g'}}\inf\;\operatorname{Spec}\left( \big((\nabla_{\mathcal{P}}^{\mathcal{R}})^2 E(\phi_g')\big)^2 \big|_{R_{\phi_g'}} \right)=\mu_{\phi_g'}.
		\end{align*}
		This completes the proof.
	\end{proof}
	\begin{lemma}\label{PL-MB}
		Let $E$ satisfy the Polyak--\L ojasiewicz inequality around $\phi_g\in\mathcal{S}$, i.e., there exists $\sigma>0$ and a constant $\mu_{PL}>0$ such that for any $\phi\in\mathcal{B}_{\sigma}(\phi_g)$, the following inequality holds
		\begin{align*}
			E(\phi)-E(\phi_g)\le \frac{1}{2\mu_{PL}}\left\|\nabla^{\mathcal{R}}_{\mathcal{P}}E(\phi)\right\|^2_{\mathcal{P}_{\phi}}.
		\end{align*}
		Then, $E$ satisfies the Morse--Bott condition around $\phi_g$.
	\end{lemma}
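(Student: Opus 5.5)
The plan is to show that the PL inequality forces the local critical/minimizing set near $\phi_g$ to be a finite-dimensional $C^1$ embedded submanifold whose tangent space equals the Hessian kernel. The first step is to identify $\mathcal{S}$ locally with the critical set. If $\phi\in\mathcal{B}_{\sigma}(\phi_g)$ is a critical point, then $\nabla^{\mathcal{R}}_{\mathcal{P}}E(\phi)=0$, so PL gives $E(\phi)\le E(\phi_g)$. Since $\phi_g$ is a local minimizer, after shrinking $\sigma$ we get $E(\phi)=E(\phi_g)$. Conversely, every point of $\mathcal{S}\cap\mathcal{B}_\sigma(\phi_g)$ is critical. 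Hence $\mathcal{S}_\sigma(\phi_g)=\mathrm{Crit}(E)\cap\mathcal{B}_\sigma(\phi_g)=E^{-1}(E(\phi_g))\cap\mathcal{B}_\sigma(\phi_g)$.

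Next I would use Lemma \ref{PL-Co}. It shows that at every $\phi_g'\in\mathcal{S}_\sigma(\phi_g)$ the Hessian is coercive on $R_{\phi_g'}$ with constant $\mu_{PL}$. The proof of that lemma also shows that the $H_0^1$-Riemannian Hessian is $\tfrac12 I+\text{compact}$, hence Fredholm of index zero. So $K_{\phi_g'}$ is finite-dimensional, the Hessian is an isomorphism from $R_{\phi_g'}$ onto itself (its $\mathcal{P}$-adjoint range), and $T_{\phi_g'}\mathcal{M}=K_{\phi_g'}\oplus R_{\phi_g'}$.

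To build the manifold structure I would apply a Lyapunov--Schmidt / implicit function argument at $\phi_g$. Parametrize a neighborhood in $\mathcal{M}$ by $(k,r)\in K_{\phi_g}\times R_{\phi_g}$ via the retraction $\mathcal{R}_{\phi_g}(k+r)$. Then consider the map
\[
G(k,r):=\mathcal{J}_{\phi_g}\,\mathcal{P}_{\phi_g}^{-1}\,\mathrm{Proj}\,E'(\mathcal{R}_{\phi_g}(k+r))\quad\text{(suitably transported to }T_{\phi_g}\mathcal{M}),
\]
that is, the $R_{\phi_g}$-component of the Riemannian gradient. Its partial derivative in $r$ at $(0,0)$ is the Hessian restricted to $R_{\phi_g}$, which is invertible by the coercivity above. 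The implicit function theorem (in $C^1$, using $E\in C^2$) then gives a $C^1$ map $r=h(k)$ with $h(0)=0$ and $Dh(0)=0$. Every critical point near $\phi_g$ lies on the graph $\Gamma=\{\mathcal{R}_{\phi_g}(k+h(k))\}$, which is a $C^1$ embedded submanifold of dimension $\dim K_{\phi_g}$ with tangent space $K_{\phi_g}$ at $\phi_g$.

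The main obstacle is to show that every point of $\Gamma$ is actually critical. This is exactly where PL is needed beyond the Hessian information, since a degenerate Hessian alone allows $\Gamma$ to contain non-critical points. I would argue by a gradient-flow/length argument: PL together with the local Lipschitz bound on the gradient (Property \ref{Property-P}-(ii)) implies, as in Łojasiewicz theory with exponent $1/2$, that the $\mathcal{P}$-gradient flow started near $\phi_g$ has finite length $\le C\sqrt{E(\phi)-E(\phi_g)}$ and converges to a point of $\mathcal{S}$. From this I would obtain the quadratic growth and error bound
\[
\operatorname{dist}(\phi,\mathcal{S})\le C\sqrt{E(\phi)-E(\phi_g)}\le C\|\nabla^{\mathcal{R}}_{\mathcal{P}}E(\phi)\|_{\mathcal{P}_\phi}.
\]
Restricted to $\Gamma$, the gradient has only a $K$-component, since the $R$-component vanishes by construction. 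On the other hand, the reduced function $e(k)=E(\mathcal{R}_{\phi_g}(k+h(k)))$ satisfies $\nabla e(k)=$ the $K$-component of the gradient, up to a small perturbation. So $e$ is a $C^1$ function on a finite-dimensional ball that satisfies PL, $e(k)-e_{\min}\le C|\nabla e(k)|^2$. Its minimum $e(0)$ is also a local minimum. I would then show $e\equiv e(0)$ by contradiction: if $e(k_0)>e(0)$, follow the finite-dimensional gradient flow of $e$ from $k_0$. It reaches, within distance $C\sqrt{e(k_0)-e(0)}$, a point $k_\ast$ with $e(k_\ast)=e(0)$. Together with the quadratic-growth bound on the full space, this gives that the $R$-direction Hessian and the $K$-direction degeneracy are compatible only if $\mathcal{S}\cap\mathcal{B}_\sigma$ has the full dimension $\dim K_{\phi_g}$. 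To make this rigorous I would prove $\dim T_{\phi_g}\mathcal{S}\ge\dim K_{\phi_g}$ by the counting used in the Bott/Feehan theory: the distance-to-$\mathcal{S}$ error bound forces each $k\in K_{\phi_g}$ to be a limit direction of $\mathcal{S}$, because $E(\mathcal{R}_{\phi_g}(tk))-E(\phi_g)=o(t^2)$ and hence $\operatorname{dist}=o(t)$.

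With $\mathcal{S}_\sigma(\phi_g)=\Gamma$ established, the tangent identity $T_{\phi_g'}\mathcal{S}_\sigma(\phi_g)=K_{\phi_g'}$ at every point follows. The inclusion $\subseteq$ is obtained by differentiating $\nabla^{\mathcal{R}}E=0$ along curves in $\mathcal{S}$. Equality then follows from the equal dimension, since $\dim K_{\phi_g'}$ is upper semicontinuous and coercivity on $R$ with the uniform constant $\mu_{PL}$ prevents the kernel from growing. This yields the Morse--Bott condition.
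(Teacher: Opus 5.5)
Your preliminary steps are sound. Identifying $\mathcal{S}_\sigma(\phi_g)$ with the nearby critical set is fine. So is getting $\dim K_{\phi_g'}<\infty$ and coercivity on $R_{\phi_g'}$ from Lemma~\ref{PL-Co}. Solving only the $R_{\phi_g}$-component by the implicit function theorem, so that all nearby critical points lie on the graph $\Gamma$, is also correct. You also correctly isolate the decisive point: every point of $\Gamma$ must be critical. Neither argument you give for it works, however.

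\emph{The flow argument.} The gradient flow of $e$ from some $k_0$ with $e(k_0)>e(0)$ reaching $k_\ast$ with $e(k_\ast)=e(0)$ is not a contradiction; it is exactly what PL predicts. The sentence about compatibility of the $R$-Hessian with the $K$-degeneracy is not an argument.

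\emph{The tangent-cone argument.} You run it only at the single point $\phi_g$. There the error bound and $E(\mathcal{R}_{\phi_g}(tk))-E(\phi_g)=o(t^2)$ do show that every $k\in K_{\phi_g}$ is a limit direction of $\mathcal{S}$. But this does not force $\mathcal{S}$ to fill $\Gamma$ near $\phi_g$. Inside a one-dimensional $\Gamma$, the closed set $\{0\}\cup\{\pm 1/n:\, n\in\mathbb{N}\}$ has both tangent directions at $0$ as limit directions. Moreover, ``$\dim T_{\phi_g}\mathcal{S}$'' presupposes the manifold structure you are trying to prove. Hence $\mathcal{S}_\sigma(\phi_g)=\Gamma$, on which your last paragraph rests, is not established.

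The step can be closed with your own ingredients, provided the local information is used at \emph{every} $s\in\mathcal{S}$ near $\phi_g$.
First, PL holds around each such $s$ with the same $\mu_{PL}$, so Lemma~\ref{PL-Co} gives coercivity on $R_s$ with a uniform constant. Transporting $R_s$ to $T_{\phi_g}\mathcal{M}$ shows that $K_{\phi_g}$ meets a subspace of codimension $\dim K_s$ only in $0$, so $\dim K_s\ge\dim K_{\phi_g}$. Note that it is this uniform constant that prevents the kernel from \emph{shrinking}; upper semicontinuity gives the reverse inequality. This constancy of $\dim K$ along $\mathcal{S}_\sigma(\phi_g)$ is what the paper proves with its Fredholm-index count.
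Second, your error bound at $s$ shows every element of $K_s$ is a limit direction of $\mathcal{S}\subseteq\Gamma$ at $s$. Hence $K_s\subseteq T_s\Gamma$, and by equality of dimensions the contingent cone of $\mathcal{S}$ at every $s$ is all of $T_s\Gamma$.
Third, a nearest-point argument in a chart of $\Gamma$ finishes. If $p\in\Gamma\setminus\mathcal{S}$ were close to $\phi_g$ and $s$ a nearest point of $\mathcal{S}$ to $p$ in chart coordinates, the direction from $s$ to $p$ would be a limit direction at $s$. That would produce points of $\mathcal{S}$ strictly closer to $p$, a contradiction.
Alternatively, the limit map of the PL gradient flow is a continuous retraction onto $\mathcal{S}$ moving points by at most $C\sqrt{E-E(\phi_g)}$. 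A Brouwer-degree argument on $\Gamma$ then shows its image contains a neighbourhood of $\phi_g$ in $\Gamma$.

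Be aware that the paper does not supply this step either. It applies the implicit function theorem to the full gradient map $G$ and asserts $G(v_1+g(v_1))=0$. But $\partial_{v_2}G(0,0)$ is an isomorphism onto $R_{\phi_g'}$ only, so the theorem controls just the $R_{\phi_g'}$-component, i.e.\ it produces exactly your graph $\Gamma$.
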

	\begin{proof}
		We first observe that, for $\sigma > 0$ sufficiently small, the Polyak--\L ojasiewicz inequality implies that any critical point in $\mathcal{B}_{\sigma}(\phi_g)$ has the same energy as $\phi_g$. Indeed, if  $ \phi_g' \in \mathcal{B}_{\sigma}(\phi_g) $  satisfies  $ \nabla_{\mathcal{P}}^{\mathcal{R}}E(\phi_g') = 0 $, then the Polyak--\L ojasiewicz inequality gives  $ E(\phi_g') - E(\phi_g) \le 0 $. Since  $ \phi_g $  is a local minimizer, we also have  $ E(\phi_g') \ge E(\phi_g) $, hence  $ E(\phi_g') = E(\phi_g) $. Therefore, for  $ \sigma > 0 $  sufficiently small, the set of critical points in  $ \mathcal{B}_{\sigma}(\phi_g) $  coincides with
		\begin{align*}
			\mathcal{S}_{\sigma}(\phi_g)=\left\{\phi\in\mathcal{B}_{\sigma}(\phi_g)\,\big|\,\nabla_{\mathcal{P}}^{\mathcal{R}}E(\phi)=0\right\}.
		\end{align*}
		Moreover, for any  $ \phi_g' \in \mathcal{S}_{\sigma}(\phi_g)$, since  $ \mathcal{B}_{\sigma}(\phi_g) $  is open, there exists  $ \sigma_{\phi_g'} > 0 $  such that 
		$
		\mathcal{B}_{\sigma_{\phi_g'}}(\phi_g') \subset \mathcal{B}_{\sigma}(\phi_g).
		$
		Consequently, for all  $ \phi \in \mathcal{B}_{\sigma_{\phi_g'}}(\phi_g') $, the Polyak--\L ojasiewicz inequality holds around  $ \phi_g' $ with the same constant $\mu_{PL}$.
		
		We now prove that, for  $ \sigma $  small enough,  $ \mathcal{S}_{\sigma}(\phi_g) $  is a finite-dimensional  $ C^1 $  embedded submanifold. For any  $ \phi_g' \in \mathcal{S}_{\sigma}(\phi_g)$, the kernel $K_{\phi_g^{'}}$ is finite-dimensional by Lemma \ref{PL-Co}. Let  $ \mathcal{R}_{\phi_g'}(v) $,  $ v \in T_{\phi_g'} \mathcal{M} $ , be a smooth local retraction satisfying  $ \mathcal{R}_{\phi_g'}(0) = \phi_g' $  and  $ \mathcal{R}'_{\phi_g'}(0) = I|_{T_{\phi_g'} \mathcal{M}} $. Then, by the inverse function theorem, there exists a sufficiently small neighborhood  $ \mathcal{B}_{\sigma_{\phi_g'}}(\phi_g') $  such that every  $ \phi \in \mathcal{B}_{\sigma_{\phi_g'}}(\phi_g') $  can be uniquely written as  $ \phi = \mathcal{R}_{\phi_g'}(v) $  for some small  $ v \in T_{\phi_g'} \mathcal{M} $. Define the pulled-back gradient map
		$
		G(v) := \nabla_{\mathcal{P}}^{\mathcal{R}} E(\mathcal{R}_{\phi_g'}(v)).
		$
		Then  $ G : T_{\phi_g'} \mathcal{M} \to T_{\mathcal{R}_{\phi_g'}(v)} \mathcal{M} $  is a  $ C^1 $  map with  $ G(0) = 0 $. Decompose  $ T_{\phi_g'} \mathcal{M} = K_{\phi_g'} \oplus R_{\phi_g'} $. Write  $ v = v_1 + v_2 $  with  $ v_1 \in K_{\phi_g'} $,  $ v_2 \in R_{\phi_g'} $, and define  $ G(v_1, v_2) := G(v_1 + v_2) $. Then
		$
		\partial_{v_2} G(0,0) = (\nabla_{\mathcal{P}}^{\mathcal{R}})^2 E(\phi_g')\big|_{R_{\phi_g'}}.
		$
		By Lemma \ref{PL-Co}, the Polyak--\L ojasiewicz inequality implies that this restriction is coercive
		\[
		\big( (\nabla_{\mathcal{P}}^{\mathcal{R}})^2 E(\phi_g') v_2, v_2 \big)_{\mathcal{P}_{\phi_g'}} \ge \mu_{PL} \|v_2\|_{\mathcal{P}_{\phi_g'}}^2, \quad \forall\; v_2 \in R_{\phi_g'},
		\]
		hence  $ \partial_{v_2} G(0,0) $  is an isomorphism from  $ R_{\phi_g'} $  onto $ R_{\phi_g'} $.
		By the implicit function theorem, there exists a  $ C^1 $  map  $ g $ from a neighborhood of  $  0  $  in  $  K_{\phi_g'}  $  into a neighborhood of $0$ in $R_{\phi_g'}$ such that $\forall\, v_1 \in \mathcal{U}_{\sigma}(0):=\{v\in K_{\phi_g'}\;|\;\|v\|_{H^1}<\sigma\}$,
		\[
		G(v_1 + g(v_1)) = 0\quad\text{and}\quad g'(0)=\left(\partial_{v_2} G(0,0)\right)^{-1}\partial_{v_1} G(0,0)=0,
		\]
		and every solution of  $ G(v) = 0 $  in a neighborhood of  $ 0 $  is of this form. Thus, we have constructed a local  $  C^1  $  chart for the set  $  \mathcal{S}_{\sigma}(\phi_g)  $  around any point  $  \phi_g'  $, given by 
		\begin{align*}
			\Phi(v_1) = \mathcal{R}_{\phi_g'}(v_1 + g(v_1))\quad\text{with}\quad \Phi'(0) = I|_{K_{\phi_g'}}.
		\end{align*}
		By the inverse function theorem, this is a local diffeomorphism from a neighborhood of  $  0  $  in  $  K_{\phi_g'}  $  onto a neighborhood of  $  \phi_g'  $  in  $  \mathcal{S}_{\sigma}(\phi_g)  $.
		
		It remains to show that 
		$
		\dim K_{\phi_g'} \equiv \dim K_{\phi_g}, \quad \forall\, \phi_g' \in \mathcal{S}_{\sigma}(\phi_g).
		$
		Using \eqref{Uniform-co} in the appendix together with \textbf{(A4)}-$(i)$, we can deduce that for any  $  \phi_g' \in \mathcal{B}_{\sigma}(\phi_g)  $  with  $  \sigma > 0  $  sufficiently small, the Riemannian Hessian  $  (\nabla_{\mathcal{P}}^{\mathcal{R}})^2 E(\phi_g')  $  is coercive on the subspace  $  T_{\phi_g}^{\phi_g'}(R_{\phi_g}) \subset T_{\phi_g'}\mathcal{M}$, i.e.,
		\begin{align*}
			\Big( (\nabla_{\mathcal{P}}^{\mathcal{R}})^2 E(\phi_g') \, T_{\phi_g}^{\phi_g'} v,\, T_{\phi_g}^{\phi_g'} v \Big)_{\mathcal{P}_{\phi_g'}}
			&=\Big\langle (E''(\phi_g')-\lambda_{\phi_g'}\mathcal{I}) \, T_{\phi_g}^{\phi_g'} v,\, T_{\phi_g}^{\phi_g'} v \Big\rangle\\
			&\ge C \, \|T_{\phi_g}^{\phi_g'} v\|_{H^1}^2\ge C \, \|T_{\phi_g}^{\phi_g'} v\|_{\mathcal{P}_{\phi_g'}}^2, \quad \forall\, v \in R_{\phi_g}.
		\end{align*}
		Consequently,
		$
		K_{\phi_g'} \cap T_{\phi_g}^{\phi_g'}(R_{\phi_g}) = \{0\}.
		$
		This implies that if  $  \mathcal{J}_{\phi_g'} v_1 = \mathcal{J}_{\phi_g'} v_2  $  for  $  v_1, v_2 \in T_{\phi_g}^{\phi_g'}(R_{\phi_g})  $, then  $  v_1 - v_2 \in K_{\phi_g'} \cap T_{\phi_g}^{\phi_g'}(R_{\phi_g}) = \{0\}  $, and hence  $  v_1 = v_2  $. In other words,  $  \mathcal{J}_{\phi_g'}  $  is injective on  $  T_{\phi_g}^{\phi_g'}(R_{\phi_g})  $. 
		
		We consider the restriction of the projection $\mathcal{J}_{\phi_g'}$ to the tangent space $T_{\phi_g'}\mathcal{M}$, denoted by $\mathcal{J}_{\phi_g'}|_{T_{\phi_g'}\mathcal{M}} : T_{\phi_g'}\mathcal{M} \to R_{\phi_g'} \subset T_{\phi_g'}\mathcal{M}$. Then $\mathcal{J}_{\phi_g'}|_{T_{\phi_g'}\mathcal{M}}$ is the $\mathcal P_{\phi_g'}$-orthogonal projection onto $R_{\phi_g'}$ with kernel $K_{\phi_g'}$, hence
		\[
		\mathcal{J}_{\phi_g'}|_{T_{\phi_g'}\mathcal{M}} = I - \Pi_{K_{\phi_g'}} : T_{\phi_g'}\mathcal{M} \to T_{\phi_g'}\mathcal{M},
		\]
		where $\Pi_{K_{\phi_g'}}$ denotes the $\mathcal{P}_{\phi_g'}$-orthogonal projection onto $K_{\phi_g'}$. Since $\dim K_{\phi_g'}<\infty$, $\Pi_{K_{\phi_g'}}$ is a finite-rank operator, and therefore $\mathcal{J}_{\phi_g'}|_{T_{\phi_g'}\mathcal{M}}$ is a Fredholm operator of index zero.
		Consider the inclusion map
		$
		\iota_{\phi_g'}: T_{\phi_g}^{\phi_g'}(R_{\phi_g}) \hookrightarrow T_{\phi_g'}\mathcal{M}.
		$
		Since $T_{\phi_g}^{\phi_g'}(R_{\phi_g})$ is a closed subspace of finite codimension in $T_{\phi_g'}\mathcal{M}$, the map $\iota_{\phi_g'}$ is a Fredholm operator with
		$
		\operatorname{ind} \iota_{\phi_g'} = -\operatorname{codim}_{T_{\phi_g'}\mathcal{M}} T_{\phi_g}^{\phi_g'}(R_{\phi_g}).
		$
		Therefore, the restricted operator $\mathcal{J}_{\phi_g'}|_{T_{\phi_g}^{\phi_g'}(R_{\phi_g})} = \mathcal{J}_{\phi_g'}|_{T_{\phi_g'}\mathcal{M}}\circ \iota_{\phi_g'}$ is a composition of Fredholm operators and hence itself Fredholm. By the index formula for compositions (see \cite[Theorem~2.8 in Chapter~XVII, Section~2]{Lang1993}),
		\[
		\operatorname{ind}\;\mathcal{J}_{\phi_g'}|_{T_{\phi_g}^{\phi_g'}(R_{\phi_g})}
		= \operatorname{ind} \iota_{\phi_g'} + \operatorname{ind} \mathcal{J}_{\phi_g'}|_{T_{\phi_g'}\mathcal{M}}
		= -\operatorname{codim}_{T_{\phi_g'}\mathcal{M}} T_{\phi_g}^{\phi_g'}(R_{\phi_g}) + 0.
		\]
		On the other hand, by definition of the Fredholm index,
		\[
		\operatorname{ind}\;\mathcal{J}_{\phi_g'}|_{T_{\phi_g}^{\phi_g'}(R_{\phi_g})}
		= \dim \ker\;\mathcal{J}_{\phi_g'}|_{T_{\phi_g}^{\phi_g'}(R_{\phi_g})}
		- \operatorname{codim}_{T_{\phi_g'}\mathcal{M}} \mathcal{J}_{\phi_g'}\bigl(T_{\phi_g}^{\phi_g'}(R_{\phi_g})\bigr).
		\]
		Since $\mathcal{J}_{\phi_g'}$ is injective on $T_{\phi_g}^{\phi_g'}(R_{\phi_g})$, the kernel vanishes. Hence,
		\[
		\operatorname{ind}\;\mathcal{J}_{\phi_g'}|_{T_{\phi_g}^{\phi_g'}(R_{\phi_g})}
		= -\operatorname{codim}_{T_{\phi_g'}\mathcal{M}} \mathcal{J}_{\phi_g'}\bigl(T_{\phi_g}^{\phi_g'}(R_{\phi_g})\bigr).
		\]
		Comparing the two expressions for the index yields
		\[
		\operatorname{codim}_{T_{\phi_g'}\mathcal{M}} \mathcal{J}_{\phi_g'}\bigl(T_{\phi_g}^{\phi_g'}(R_{\phi_g})\bigr)
		= \operatorname{codim}_{T_{\phi_g'}\mathcal{M}} T_{\phi_g}^{\phi_g'}(R_{\phi_g}).
		\]
		Moreover, since the map $T_{\phi_g}^{\phi_g'}$ is an isomorphism, we have
		\[
		\operatorname{codim}_{T_{\phi_g'}\mathcal{M}} T_{\phi_g}^{\phi_g'}(R_{\phi_g})
		= \operatorname{codim}_{T_{\phi_g}\mathcal{M}} R_{\phi_g}
		= \dim K_{\phi_g}.
		\]
		Finally, because $R_{\phi_g'} \supset \mathcal{J}_{\phi_g'}\bigl(T_{\phi_g}^{\phi_g'}(R_{\phi_g})\bigr)$, it follows that
		\[
		\dim K_{\phi_g'} 
		= \operatorname{codim}_{T_{\phi_g'}\mathcal{M}} R_{\phi_g'}
		\leq \operatorname{codim}_{T_{\phi_g'}\mathcal{M}} \mathcal{J}_{\phi_g'}\bigl(T_{\phi_g}^{\phi_g'}(R_{\phi_g})\bigr)
		= \dim K_{\phi_g}.
		\]
		Reversing the roles of  $ \phi_g $  and  $ \phi_g' $  yields the opposite inequality  $ \dim K_{\phi_g} \le \dim K_{\phi_g'} $. Therefore,
		$
		\dim K_{\phi_g'} = \dim K_{\phi_g}, \forall\, \phi_g' \in \mathcal{S}_{\sigma}(\phi_g).
		$
		This shows that the set $\mathcal{S}_{\sigma}(\phi_g)$ is a $ C^1 $  embedded submanifold, and that the kernel of the Riemannian Hessian coincides with the tangent space to this submanifold at each point, i.e., the Morse--Bott condition holds.
	\end{proof}
	To sum up, for the Gross--Pitaevskii energy funcational, we establish that the Morse--Bott condition and the Polyak--\L ojasiewicz inequality are equivalent on Hilbert manifold $ \mathcal{M}$. To the best of our knowledge, \cite{2025Fast} is the first work to prove this equivalence in the setting of finite-dimensional manifolds.
	
	To prove {\bf Theorem \ref{Opt-convergence}}, we introduce the operator  \(\mathcal{G}_{\tau}(\phi^*_g): R_{\phi^*_g} \to R_{\phi^*_g}\) defined by
	\begin{align*}
		\mathcal{G}_{\tau}(\phi_g^*) &:=\mathcal{J}_{\phi_g^*} \left(I - \tau\mathcal{P}^{-1}_{\phi_g^*} \left(E''(\phi_g^*) - \lambda_{\phi_g^*} \mathcal{I}\right)\right)\Big|_{R_{\phi^*_g}}.
	\end{align*}
	The operator norm characterization of $\mathcal{G}_{\tau}(\phi_g^*)$ is given as follows.
	
	\begin{lemma}\label{G-spectrum}
		Let $E$ satisfy the Morse--Bott condition around $\phi_g$. Endow the normal space $R_{\phi_g^*}$ with the inner product $(\cdot,\cdot)_{\mathcal{P}_{\phi_g^*}}$. 
		Then, there exist $\sigma>0$ such that for all $\phi_g^*\in\mathcal{S}_{\sigma}(\phi_g)$,the operator norm of $\mathcal{G}_{\tau}(\phi_g^*)$ satisfies
		\[
		\|\mathcal{G}_{\tau}(\phi_g^*)\| = \max\{\,|1 - \tau \mu|,\; |1 - \tau L|\,\}.
		\]
	\end{lemma}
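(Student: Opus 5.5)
The plan is to show that $\mathcal{G}_{\tau}(\phi_g^*)$ is a self-adjoint operator on the Hilbert space $\bigl(R_{\phi_g^*},(\cdot,\cdot)_{\mathcal{P}_{\phi_g^*}}\bigr)$ of the form $I-\tau\mathcal{A}$. Here $\mathcal{A}$ is the restriction to $R_{\phi_g^*}$ of the Riemannian Hessian, and its spectrum lies in $[\mu,L]$ with both endpoints in the spectrum. The norm of a self-adjoint operator equals its spectral radius. Applying the spectral mapping theorem to $t\mapsto 1-\tau t$, which is affine so its maximum modulus on $[\mu,L]$ is attained at an endpoint, then gives $\|\mathcal{G}_\tau\|=\max\{|1-\tau\mu|,|1-\tau L|\}$.

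First, fix $\sigma$ small enough that Property \ref{Prop2} and Property \ref{Property-P}-$(i)$ hold at every $\phi_g^*\in\mathcal{S}_\sigma(\phi_g)$, and write $\mathcal{A}:=\mathcal{J}_{\phi_g^*}\mathcal{P}_{\phi_g^*}^{-1}(E''(\phi_g^*)-\lambda_{\phi_g^*}\mathcal{I})|_{R_{\phi_g^*}}$. Then $\mathcal{G}_\tau(\phi_g^*)=I-\tau\mathcal{A}$, since $\mathcal{J}_{\phi_g^*}$ is the identity on $R_{\phi_g^*}$. For $u,v\in R_{\phi_g^*}$ the projection $\mathcal{J}_{\phi_g^*}$ is $\mathcal{P}_{\phi_g^*}$-orthogonal, so
\[
(\mathcal{A}u,v)_{\mathcal{P}_{\phi_g^*}}=(\mathcal{P}_{\phi_g^*}^{-1}(E''(\phi_g^*)-\lambda_{\phi_g^*}\mathcal{I})u,v)_{\mathcal{P}_{\phi_g^*}}=\langle (E''(\phi_g^*)-\lambda_{\phi_g^*}\mathcal{I})u,v\rangle .
\]
This bilinear form is symmetric, so $\mathcal{A}$ is bounded and self-adjoint on $R_{\phi_g^*}$; boundedness follows from Property \ref{Property-of-E''}-$(i)$ and the coercivity in \textbf{(A4)}-$(i)$. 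Note that $\mathcal{P}_{\phi_g^*}^{-1}(E''-\lambda\mathcal{I})u$ need not lie in $T_{\phi_g^*}\mathcal{M}$. The identity still holds because the $\mathcal{P}$-orthogonal projection is taken in $H_0^1$ and tested against $v\in R_{\phi_g^*}$. I will check that $\mathcal{J}_{\phi_g^*}$, as defined on all of $H_0^1(\mathcal{D})$, is genuinely $\mathcal{P}_{\phi_g^*}$-orthogonal onto $R_{\phi_g^*}$.

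By Property \ref{Property-P}-$(i)$, the Rayleigh quotient $(\mathcal{A}v,v)_{\mathcal{P}}/(v,v)_{\mathcal{P}}$ has infimum $\mu$ and supremum $L$ over $R_{\phi_g^*}\setminus\{0\}$. For a bounded self-adjoint operator, the numerical range has closure equal to the convex hull of the spectrum. Hence $\sigma(\mathcal{A})\subset[\mu,L]$ with $\mu,L\in\sigma(\mathcal{A})$, so $\sigma(I-\tau\mathcal{A})=1-\tau\sigma(\mathcal{A})$ contains $1-\tau\mu$ and $1-\tau L$ and lies between them. It follows that $\|I-\tau\mathcal{A}\|=\sup_{t\in\sigma(\mathcal{A})}|1-\tau t|=\max\{|1-\tau\mu|,|1-\tau L|\}$.

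The step I expect to be the main obstacle is bookkeeping rather than analysis. The constants $\mu,L$ in Property \ref{Property-P}-$(i)$ are a priori pointwise at $\phi_g^*$, and Remark \ref{Diff-Preconditioner} absorbs their variation along $\mathcal{S}_\sigma(\phi_g)$ into the notation. I will therefore read the lemma with $\mu=\mu(\phi_g^*)$ and $L=L(\phi_g^*)$ defined exactly as the infimum and supremum at $\phi_g^*$, which makes the equality exact, and note that by the continuity discussed in that remark they are uniformly close to the values at $\phi_g$. Lemma \ref{Equal-L} additionally shows that $L$ may equivalently be computed over all of $T_{\phi_g^*}\mathcal{M}$.
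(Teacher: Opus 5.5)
Your proposal is correct and follows the paper's proof. Both arguments show that $\mathcal{G}_{\tau}(\phi_g^*)=I-\tau\mathcal{A}$ is self-adjoint in $(\cdot,\cdot)_{\mathcal{P}_{\phi_g^*}}$ via $(\mathcal{A}u,v)_{\mathcal{P}_{\phi_g^*}}=\langle (E''(\phi_g^*)-\lambda_{\phi_g^*}\mathcal{I})u,v\rangle$, and then reduce to the extreme Rayleigh quotients $\mu$ and $L$ of Property~\ref{Property-P}-$(i)$. The only difference is that the paper skips the spectrum and uses directly that a self-adjoint operator's norm is $\sup_{v\neq 0}|(\mathcal{G}_{\tau}(\phi_g^*)v,v)_{\mathcal{P}_{\phi_g^*}}|/(v,v)_{\mathcal{P}_{\phi_g^*}}=\sup_{v\neq 0}|1-\tau R(v)|$, where $R(v)$ is the Rayleigh quotient; this avoids your spectral-mapping and convex-hull steps and any complexification of the real space $R_{\phi_g^*}$.
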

	\begin{proof}
		The operator $\mathcal{G}_{\tau}(\phi_g^*)$ is clearly bounded and linear on the Hilbert space $R_{\phi^*_g}$ endowed with the inner product $(\cdot,\cdot)_{\mathcal{P}_{\phi_g^*}}$. 
		Moreover, it is self-adjoint. Indeed, for any $u, v \in R_{\phi^*_g}$, we have
		\begin{eqnarray*}
			\lefteqn{ (\mathcal{G}_{\tau}(\phi_g^*) u, v)_{\mathcal{P}_{\phi_g^*}}
				\,\,=\,\, \big(\mathcal{J}_{\phi_g^*} \big( I - \tau \mathcal{P}_{\phi_g^*}^{-1} (E''(\phi_g^*) - \lambda_{\phi_g^*} \mathcal{I})  u, \, v \big)_{\mathcal{P}_{\phi_g^*}} } \\
			&=&\big(\big( I - \tau \mathcal{P}_{\phi_g^*}^{-1} (E''(\phi_g^*) - \lambda_{\phi_g^*} \mathcal{I})\big)  u, \, v \big)_{\mathcal{P}_{\phi_g^*}} 
			\,\, = \,\, \langle \mathcal{P}_{\phi_g^*}u, v \rangle - \tau \langle (E''(\phi_g^*) - \lambda_{\phi_g^*} \mathcal{I}) u, v \rangle \\
			&=&\langle \mathcal{P}_{\phi_g^*}v, u \rangle - \tau \langle (E''(\phi_g^*) - \lambda_{\phi_g^*} \mathcal{I}) v, u \rangle
			\,\,=\,\,(\mathcal{G}_{\tau}(\phi_g^*) v, u)_{\mathcal{P}_{\phi_g^*}}.
		\end{eqnarray*}
		Since $\mathcal{G}_{\tau}(\phi_g^*)$ is self-adjoint, it follows from the polarization identity that its operator norm admits the variational characterization 
		\[
		\|\mathcal{G}_{\tau}(\phi_g^*)\| 
		= \sup_{v \in R_{\phi^*_g} \setminus \{0\}} 
		\frac{|(\mathcal{G}_{\tau}(\phi_g^*) v, v)_{\mathcal{P}_{\phi_g^*}}|}{(v, v)_{\mathcal{P}_{\phi_g^*}}}.
		\]
		Using the definition of $\mathcal{G}_{\tau}(\phi_g^*)$ and the fact that $\mathcal{J}_{\phi_g^*}$ acts as the identity on $R_{\phi^*_g}$, we obtain
		\[
		(\mathcal{G}_{\tau}(\phi_g^*) v, v)_{\mathcal{P}_{\phi_g^*}} 
		= (v, v)_{\mathcal{P}_{\phi_g^*}} - \tau \langle (E''(\phi_g^*) - \lambda_{\phi_g^*} \mathcal{I}) v, v \rangle
		= (v, v)_{\mathcal{P}_{\phi_g^*}} \bigl( 1 - \tau R(v) \bigr),
		\]
		where $R(v) = \dfrac{\langle (E''(\phi_g^*) - \lambda_{\phi_g^*} \mathcal{I}) v, v \rangle}{\langle \mathcal{P}_{\phi_g^*} v, v \rangle}$.
		Hence,
		$
		\|\mathcal{G}_{\tau}(\phi_g^*)\| = \sup_{v \in R_{\phi^*_g} \setminus \{0\}} |1 - \tau R(v)|.
		$
		Then, by Property \ref{Property-P}-$(i)$,
		$
		\|\mathcal{G}_{\tau}(\phi_g^*)\| = \max\{ |1 - \tau \mu|, |1 - \tau L| \},
		$
		as claimed.
	\end{proof}
	
	\begin{lemma}\label{L-S}
		If the nonlinearity $f$ is real-analytic and $\phi_s \in \mathcal{M}$ is a critical point of $E$ (i.e., $\mathcal{H}_{\phi_s}\phi_s=\lambda_s\phi_s$, $\lambda_s=\langle\mathcal{H}_{\phi_s}\mathcal{I}\phi_s,\phi_s$), then the energy functional $E$ satisfies a refined \L ojasiewicz--Simon Riemannian gradient inequality in a neighborhood of $\phi_s$, i.e., there exist constants $\nu \in (0, \tfrac{1}{2}]$ and $\sigma>0$, with $\nu$ independent of $\mathcal{P}_{\phi}$, such that for any $\phi \in \mathcal{B}_{\sigma}(\phi_s)$, we have
		\[
		|E(\phi) - E(\phi_s)|^{1 - \nu} \leq C \, \left\|\nabla_{\mathcal{P}}^{\mathcal{R}}E(\phi)\right\|_{\mathcal{P}_{\phi}}.
		\]
	\end{lemma}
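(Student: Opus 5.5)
The plan is to reduce the statement to the classical Łojasiewicz--Simon inequality for analytic functionals on Hilbert spaces (Simon, Chill, Feehan--Maridakis). We then transfer it to the Riemannian setting with a general preconditioner, using the uniform equivalence of the $\mathcal{P}_\phi$-norms and the $H^1$-norm from \textbf{(A4)}-$(i)$.

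\emph{Step 1: local chart.} Fix $\phi_s$ and parametrize a neighborhood of $\phi_s$ in $\mathcal{M}$ by $v\in T_{\phi_s}\mathcal{M}$ through the analytic retraction $\mathcal{R}_{\phi_s}(v)=(\phi_s+v)/\|\phi_s+v\|_{L^2}$. Set $\widetilde E(v):=E(\mathcal{R}_{\phi_s}(v))$ on a ball in the real Hilbert space $T_{\phi_s}\mathcal{M}\subset H_0^1(\mathcal{D})$.

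\emph{Step 2: analyticity and Fredholm structure.} Since $f$ is real-analytic, $E$ is a sum of a quadratic form, the rotation term, and $\tfrac12\int F(|\phi|^2)$. The growth condition \textbf{(A3)} with $\theta<3$ together with Sobolev embedding then gives real-analyticity of $E$ on $H_0^1$, and hence of $\widetilde E$. This step needs care, and it is the point I expect to be the main obstacle. Analyticity of the Nemytskii operator $\phi\mapsto f(|\phi|^2)\phi$ into $H^{-1}$ is not automatic from pointwise analyticity. My first attempt would use the $H^2$/$L^\infty$ regularity of critical points (elliptic regularity on the $C^{1,1}$ domain) and apply the version of the Łojasiewicz--Simon theorem of Feehan--Maridakis that only requires analyticity in a neighborhood in a stronger space. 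Failing that, I would verify the needed analyticity directly for the physically relevant polynomial case $f(s)=\eta s$. For the Fredholm part, the Hessian $\widetilde E''(0)=(E''(\phi_s)-\lambda_s\mathcal{I})|_{T_{\phi_s}\mathcal{M}}$, expressed in the $H_0^1$ Riesz representation, equals $\tfrac12 I+\mathcal{A}_{\phi_s}$ with $\mathcal{A}_{\phi_s}$ compact. This is exactly the computation in the proof of Lemma \ref{PL-Co}, which did not use minimality. Thus $\widetilde E''(0)$ is Fredholm of index zero.

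\emph{Step 3: apply Łojasiewicz--Simon.} The abstract theorem then yields $\nu\in(0,\tfrac12]$ and $\sigma>0$ such that
\[
|\widetilde E(v)-\widetilde E(0)|^{1-\nu}\le C\|\widetilde E'(v)\|_{(T_{\phi_s}\mathcal{M})^*}
\]
for all sufficiently small $v$. The exponent $\nu$ depends only on $E$ and $\phi_s$, since no preconditioner enters this step.

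\emph{Step 4: back to the manifold.} Relate $\|\widetilde E'(v)\|_*$ to the Riemannian gradient at $\phi=\mathcal{R}_{\phi_s}(v)$. The differential $D\mathcal{R}_{\phi_s}(v)$ is a uniformly bounded isomorphism from $T_{\phi_s}\mathcal{M}$ onto $T_\phi\mathcal{M}$ for small $v$, so $\|\widetilde E'(v)\|_*\le C\,\sup_{w\in T_\phi\mathcal{M}}\langle E'(\phi),w\rangle/\|w\|_{H^1}$. Moreover $\langle E'(\phi),w\rangle=(\nabla^{\mathcal{R}}_{\mathcal{P}}E(\phi),w)_{\mathcal{P}_\phi}$ for $w\in T_\phi\mathcal{M}$. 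Combining this with $\|w\|_{\mathcal{P}_\phi}\le C_\phi\|w\|_{H^1}$ from \textbf{(A4)}-$(i)$, where $C_\phi$ is uniformly bounded on $\mathcal{B}_\sigma(\phi_s)$, gives $\|\widetilde E'(v)\|_*\le C\|\nabla^{\mathcal{R}}_{\mathcal{P}}E(\phi)\|_{\mathcal{P}_\phi}$. Every $\phi\in\mathcal{B}_\sigma(\phi_s)$ with $\sigma$ small is of the form $\mathcal{R}_{\phi_s}(v)$ with small $v$; one can take $v$ to be the $L^2$-projection onto $T_{\phi_s}\mathcal{M}$ rescaled. This concludes the proof, with only the constant $C$ depending on the preconditioner.
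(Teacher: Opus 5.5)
Your argument is correct. It uses the same three ingredients as the paper: analyticity of $E$, the Fredholm property of the Hessian as $\tfrac12 I$ plus a compact operator, and \textbf{(A4)}-$(i)$ to change metrics. It organizes them differently, though.

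The paper does not work in a chart. It invokes a \L ojasiewicz--Simon theorem for analytic functionals restricted to an analytic submanifold (its cited Corollary~5.2 with $V=H=H_0^1$), and checks only that the \emph{unconstrained} Hessian $(-\Delta)^{-1}E''(\phi)=\tfrac12 I+A_\phi$ is Fredholm of index zero. This gives the inequality for the $H_0^1$-Riemannian gradient. The paper then appeals to norm equivalence with $\nabla^{\mathcal R}_{\mathcal P}E(\phi)$, but it writes out only $\|\nabla^{\mathcal R}_{\mathcal P}E(\phi)\|_{\mathcal P_\phi}\lesssim\|\nabla^{\mathcal R}_{H_0^1}E(\phi)\|_{H_0^1}$, which is not the direction it needs.

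You pull back by the normalization retraction and use only the flat Hilbert-space theorem. The price is identifying the chart Hessian. Here the term $\mathcal R''_{\phi_s}(0)[v,v]=-\|v\|_{L^2}^2\phi_s$, paired with $E'(\phi_s)=\lambda_s\mathcal I\phi_s$, produces exactly the shift $-\lambda_s\mathcal I$. The gain is a self-contained argument. Your Step~4 proves precisely the needed bound $\|\widetilde E'(v)\|_*\le C\|\nabla^{\mathcal R}_{\mathcal P}E(\phi)\|_{\mathcal P_\phi}$ from the continuity half of \textbf{(A4)}-$(i)$ alone. The constant there is $C_\phi^{1/2}$ rather than $C_\phi$, which is immaterial.

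Your hesitation about analyticity is well founded; the paper settles it in one unproved sentence. The first sentence of your Step~2 is not true as stated: the growth bound in \textbf{(A3)} plus Sobolev embedding does not give analyticity on $H_0^1(\mathcal D)$. For example, $f(s)=s/(1+s)$ is real-analytic and satisfies \textbf{(A3)} with $\theta=0$. Yet for $d=3$, the seventh derivative of $\phi\mapsto\int_{\mathcal D}F(|\phi|^2)$ is, in suitable directions, a nonzero multiple of $\int_{\mathcal D}h^7$. Since $H_0^1(\mathcal D)\not\subset L^7(\mathcal D)$, this means $E\notin C^7(H_0^1(\mathcal D))$.

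Your fallback $f(s)=\eta s$ gives a bounded quartic form on $H_0^1$ for $d\le 3$. It is therefore exactly the case in which both your proof and the paper's are complete.

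To reach general analytic $f$, your first attempt must go through a hybrid version of the theorem in the spirit of Chill. There, analyticity is needed only for the gradient map from $H^2\cap H_0^1$ into $L^2$. This holds because $H^2\hookrightarrow L^\infty$, so $u\mapsto f(|u|^2)u$ is analytic into $L^2$. That analyticity is used only in the finite-dimensional Lyapunov--Schmidt reduction, with $\phi_s$ and the Hessian kernel lying in $H^2$ by elliptic regularity on the $C^{1,1}$ domain. The inequality itself is then obtained on an $H^1$-ball with the dual norm of $\widetilde E'$. A version that also measures closeness in the stronger norm would only give the inequality on an $H^2$-neighbourhood of $\phi_s$, which is not $\mathcal B_\sigma(\phi_s)$.
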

	\begin{proof}
		We first establish the \L ojasiewicz--Simon gradient inequality for the Riemannian gradient associated with the $H_0^1$-metric.  
		Since the nonlinearity $f$ is real-analytic, the energy functional $E$ is real-analytic on $H_0^1(\mathcal{D})$, and the constraint manifold $\mathcal{M}$ is also real-analytic.  
		Therefore, we may apply \cite[Corollary~5.2]{2020JFA} with $V = H = H_0^1(\mathcal{D})$, equipped with the $H_0^1$ inner product, provided that the Hessian operator 
		$\nabla_{H_0^1}^2 E(\phi) : H_0^1(\mathcal{D}) \to H_0^1(\mathcal{D})$ is Fredholm of index zero. A direct computation yields
		\begin{align*}
			\nabla_{H_0^1}^2 E(\phi) 
			&= \bigl(-\Delta\bigr)^{-1} E''(\phi) = \bigl(-\Delta\bigr)^{-1} \Bigl( -\tfrac{1}{2}\Delta + V - \Omega \mathcal{L}_z + f(\rho_\phi) + f'(\rho_\phi)\bigl(|\phi|^2 + \phi^2 \overline{\cdot}\,\bigr) \Bigr) \\
			&= \frac{1}{2}I + \bigl(-\Delta\bigr)^{-1} \Bigl( V - \Omega \mathcal{L}_z + f(\rho_\phi) + f'(\rho_\phi)\bigl(|\phi|^2 + \phi^2 \overline{\cdot}\,\bigr) \Bigr) =: \frac{1}{2}I + A_\phi.
		\end{align*}
		The operator $A_\phi$ is compact due to 
		\begin{align*}
			\|A_{\phi}v\|^2_{H_0^1}&=\left\langle\Bigl( V - \Omega \mathcal{L}_z + f(\rho_\phi) + f'(\rho_\phi)\bigl(|\phi|^2 + \phi^2 \overline{\cdot}\,\bigr) \Bigr)v,A_{\phi}v\right\rangle\\
			&\le C_{\phi}\left(\|v\|_{L^2}+\|v\|_{L^{6/(4-\theta)}}\right)\|A_{\phi}v\|_{H^1_0},\quad \forall\; v\in H_0^1(\mathcal{D}).
		\end{align*}
		Consequently, $\nabla_{H_0^1}^2 E(\phi)$ is a compact perturbation of the (scaled) identity operator and hence Fredholm of index zero. By \cite[Corollary~5.2]{2020JFA}, there exist constants $\nu \in (0, \tfrac{1}{2}]$ and $\sigma > 0$ such that for all $\phi \in \mathcal{B}_\sigma(\phi_s)$,
		\begin{align}\label{L-S-H}
			|E(\phi) - E(\phi_s)|^{1 - \nu} \leq C \, \bigl\| \nabla_{H_0^1}^{\mathcal{R}} E(\phi) \bigr\|_{H_0^1}.
		\end{align}
		
		Next, we show that the Riemannian gradient with respect to an arbitrary preconditioner $\mathcal{P}_\phi$ is equivalent to the one induced by the scaled $H_0^1$-metric (cf. \textbf{(A4)}-$(i)$). Using the relation between the two metrics, we compute
		\begin{align*}
			\bigl\| \nabla_{\mathcal{P}_\phi}^{\mathcal{R}} E(\phi) \bigr\|_{\mathcal{P}_\phi}^2 
			&= \bigl\langle \mathcal{H}_\phi \phi - \lambda_\phi \phi, \nabla_{\mathcal{P}_\phi}^{\mathcal{R}} E(\phi) \bigr\rangle = \bigl\langle (-\Delta) \nabla_{H_0^1}^{\mathcal{R}} E(\phi), \nabla_{\mathcal{P}_\phi}^{\mathcal{R}} E(\phi) \bigr\rangle \\
			&\leq C_{\phi} \, \bigl\| \nabla_{H_0^1}^{\mathcal{R}} E(\phi) \bigr\|_{H_0^1} \, \bigl\| \nabla_{\mathcal{P}_\phi}^{\mathcal{R}} E(\phi) \bigr\|_{\mathcal{P}_\phi},
		\end{align*}
		which implies
		$
		\bigl\| \nabla_{\mathcal{P}_\phi}^{\mathcal{R}} E(\phi) \bigr\|_{\mathcal{P}_\phi} \leq C_{\phi} \, \bigl\| \nabla_{H_0^1}^{\mathcal{R}} E(\phi) \bigr\|_{H_0^1}.
		$
		Conversely, this follows from the equivalence between the preconditioned metric $\mathcal{P}_\phi$ and the $H_0^1$-metric. Hence, the two gradients are norm-equivalent:
		$
		\bigl\| \nabla_{H_0^1}^{\mathcal{R}} E(\phi) \bigr\|_{H_0^1} \simeq \bigl\| \nabla_{\mathcal{P}_\phi}^{\mathcal{R}} E(\phi) \bigr\|_{\mathcal{P}_\phi},
		$
		with constants independent of $\phi$ in a neighborhood of $\phi_s$.
		Combining this equivalence with \eqref{L-S-H}, we obtain
		\[
		|E(\phi) - E(\phi_s)|^{1 - \nu} \leq C \, \bigl\| \nabla_{\mathcal{P}_\phi}^{\mathcal{R}} E(\phi) \bigr\|_{\mathcal{P}_\phi}.
		\]
		
		Moreover, since the above equivalence holds for any preconditioner $\mathcal{P}_\phi$, and the exponent $\nu$ arises solely from the analytic structure of $E$ and $\mathcal{M}$ (via the Fredholm property of the Hessian), it follows that $\nu$ is independent of the choice of preconditioner $\mathcal{P}_\phi$.
	\end{proof}
	
	With this, we are ready to prove the theorems.
	\subsection{Proof of main results}
	\label{proofs-main-results}
	
	\begin{proof}[Proof of {\bf Theorem \ref{finite-classified-thm}}] We first show that under the Morse--Bott condition, each symmetry orbit $\mathcal{S}_{\phi_g} \subset \mathcal{S}_g$ is separable. That is, there exists $\sigma > 0$ such that for any $\phi \in \mathcal{B}_{\sigma}(\mathcal{S}_{\phi_g})$, $E(\phi) = E_{\mathcal{S}_g}$, implies $\phi \in \mathcal{S}_{\phi_g}$, or equivalently $\mathcal{B}_{\sigma}(\mathcal{S}_{\phi_g}) \cap \mathcal{S}_g = \mathcal{S}_{\phi_g}$.
		
		By applying the local projection construction from \cite[Lemma 4.1]{2025OnBest}, there exists $\sigma>0$ such that for every $\phi\in \mathcal{B}_\sigma(\phi_g)\subset\mathcal{M}$, there exists $\phi_g' \in \mathcal{S}_{\sigma}(\phi_g)$ satisfying
		\begin{align*}
			(\phi - \phi_g',\, \ci \phi_g')_{L^2} = 0,\quad (\phi - \phi_g',\, \ci \mathcal{L}_z \phi_g')_{L^2} = 0,\quad\text{and}\quad \|\phi - \phi_g'\|_{H^1} \le C \|\phi - \phi_g\|_{H^1}.
		\end{align*}
		Moreover, we have the decomposition
		\[
		\phi - \phi_g' = \operatorname{Proj}^{L^2}_{\phi_g'}(\phi - \phi_g') - \frac{1}{2} \|\phi - \phi_g'\|_{L^2}^2 \, \phi_g'.
		\]
		Now, choose $\sigma > 0$ sufficiently small. For any $\phi \in \mathcal{B}_{\sigma}(\mathcal{S}_{\phi_g})$ and all sufficiently small $\varepsilon > 0$, a Taylor expansion of $E$ at $\phi_g'$ yields
		\begin{align*}
			E(\phi) - E(\phi_g)
			&= E(\phi) - E(\phi_g') \\
			&= \langle E'(\phi_g'),\, \phi - \phi_g' \rangle 
			+ \frac{1}{2} \langle E''(\phi_g')(\phi - \phi_g'),\, \phi - \phi_g' \rangle 
			+ o\big( \|\phi - \phi_g'\|_{H^1}^2 \big) \\
			&= \frac{1}{2} \big\langle \big(E''(\phi_g') - \lambda_{\phi_g'} \mathcal{I} \big)(\phi - \phi_g'),\, \phi - \phi_g' \big\rangle 
			+ o\big( \|\phi - \phi_g'\|_{H^1}^2 \big) \\
			&\ge \frac{\mu - \varepsilon}{2} \, (\phi - \phi_g',\, \phi - \phi_g')_{E''(\phi_g')},
		\end{align*}
		where $E''(\phi_g')$ is coercive by \textbf{Property \ref{Prop2}} together with the fact that $\lambda_{\phi_g'}>0$. Consequently, if $E(\phi) = E(\phi_g)$, the above inequality forces $\phi = \phi_g' \in \mathcal{S}_{\phi_g}$. This proves that each orbit $\mathcal{S}_{\phi_g}$ is separable in $\mathcal{S}_g$. 
		
		It remains to show that $\mathcal{S}_g$ is compact in $H_0^1(\mathcal{D})$. Clearly, $\mathcal{S}_g$ is bounded in $H_0^1(\mathcal{D})$. To prove compactness, it suffices to verify sequential compactness. Let $\{v^n\}_{n \in \mathbb{N}} \subset \mathcal{S}_g$ be an arbitrary sequence. By the boundedness of $\mathcal{S}_g$ and the Rellich--Kondrachov compact embedding $H_0^1(\mathcal{D}) \subset\subset L^p(\mathcal{D})$ for $1 \le p < 6$, there exist a subsequence (still denoted by $\{v^{n}\}$) and some $v^* \in H_0^1(\mathcal{D})$ such that
		\[
		v^{n} \rightharpoonup v^* \quad \text{weakly in } H_0^1(\mathcal{D}), \qquad
		v^{n} \to v^* \quad \text{strongly in } L^p(\mathcal{D}) \quad\text{ for } 1 \le p < 6.
		\]
		Since the nonlinearity $F(\rho_v)$ satisfies suitable growth conditions \textbf{(A3)} (i.e., $|F(|z|^2)| \le C(1 + |z|^{\theta+3})$ with $\theta < 3$), the sequence $\{F(\rho_{v^n})\}_{n\in\mathbb{N}}$ is uniformly integrable in $L^1$, and hence we obtain
		\[
		\lim_{n \to \infty} \int_{\mathcal{D}} F(\rho_{v^n}) \, \text{d}\bm{x} 
		= \int_{\mathcal{D}} F(\rho_{v^*}) \, \text{d}\bm{x}.
		\]
		Together with the fact that each $v^{n}\in \mathcal{S}_g$, it follows that
		\[
		\|v^{n}\|_{\mathcal{H}_0} \to \|v^*\|_{\mathcal{H}_0}\quad\text{with}\quad \mathcal{H}_0=\mathcal{H}_{\phi}-f(\rho_{\phi}),
		\]
		which implies strong convergence $v^{n} \to v^*$ in $H_0^1(\mathcal{D})$. Moreover, $v^*\in\mathcal{M}$ and $E(v^*)=E_{\mathcal{S}_g}$. Hence, $\mathcal{S}_g$ is sequentially compact and therefore compact in $H_0^1(\mathcal{D})$. 
		
		By the separable property of $\mathcal{S}_{\phi_{g}}$, there exists $\sigma_{\phi_{g}}>0$ such that $\mathcal{B}_{\sigma_{\phi_{g}}}(\mathcal{S}_{\phi_{g}}) \cap \mathcal{S}_g = \mathcal{S}_{\phi_g}$. The collection $\big\{\mathcal{B}_{\sigma_{\phi_{g}}}(\mathcal{S}_{\phi_{g}})\big\}_{\phi_{g} \in \mathcal{S}_g}$ forms an open cover of $\mathcal{S}_g$. Since $\mathcal{S}_g$ is compact, there exist finitely many orbits $\mathcal{S}_{\phi_{g,1}}, \dots, \mathcal{S}_{\phi_{g,N}}$ such that
		\[
		\mathcal{S}_g = \bigsqcup_{\ell=1}^N \mathcal{S}_{\phi_{g,\ell}},
		\]
		which establishes a well-defined classification of the global minimizers. 
	\end{proof}
	\begin{proof}[Proof of {\bf Theorem \ref{Opt-convergence}}] 
		By \textbf{Lemma~\ref{Equal-L}}, \textbf{Lemma~\ref{P-L-In}}, and~\cite[Theorem~4.2]{2025OnBest}, we have that for arbitrary $\varepsilon>0$, there exists $\sigma>0$, such that the sequence $\{\phi^n\}_{n\in\mathbb{N}}$ generated by the P-RG converges linearly for any initial point $\phi^0 \in \mathcal{B}_\sigma(\phi_g)$ and any step size $\tau \in \bigl(0,\, 2/(L+\varepsilon)\bigr)$. Moreover, it holds $\phi^*_g := \lim\limits_{n\to\infty} \phi^n \in \mathcal{S}_{\phi_{g}}$. The optimal local $Q$-linear convergence rate is established below.
		
		For all $u \in T_{\phi^*_g}\mathcal{M}$ and $v\in K_{\phi_g^*}$, the following orthogonality relations hold:
		\begin{align*}
			\bigl(\mathrm{Proj}_{\phi_g^*}^{\mathcal{P}_{\phi_g^*}} \mathcal{P}_{\phi_g^*}^{-1}(E''(\phi_g^*)-\lambda_{\phi_g^*}\mathcal{I})u,\; v\bigr)_{\mathcal{P}_{\phi_g^*}}
			&= \bigl(\mathrm{Proj}_{\phi_g^*}^{\mathcal{P}_{\phi_g^*}} \mathcal{P}_{\phi_g^*}^{-1}(E''(\phi_g^*)-\lambda_{\phi_g^*}\mathcal{I})v,\; u\bigr)_{\mathcal{P}_{\phi_g^*}} = 0.
		\end{align*}
		A local linearization of the P-RG iteration at $\phi_g^*$ yields
		\begin{align*}
			\phi^{n+1} - \phi_g^*
			&= \phi^n - \phi_g^* - \tau\, \mathrm{Proj}^{\mathcal{P}_{\phi_g^*}}_{\phi_g^*} \mathcal{P}_{\phi_g^*}^{-1}(E''(\phi_g^*)-\lambda_{\phi_g^*}\mathcal{I})(\phi^n - \phi_g^*) + o(\|\phi^n - \phi_g^*\|_{H^1}) \\
			&= (I - \mathcal{J}_{\phi_g^*})(\phi^n - \phi_g^*) + \mathcal{G}_\tau(\phi_g^*)\mathcal{J}_{\phi_g^*}(\phi^n - \phi_g^*) + o(\|\phi^n - \phi_g^*\|_{H^1}).
		\end{align*}
		From this expansion, the following decoupled asymptotic relations follow:
		\begin{align*}\left\{
			\begin{aligned}
				&(I - \mathcal{J}_{\phi_g^*})(\phi^{n+1} - \phi^n) = o(\|\phi^n - \phi_g^*\|_{H^1})\\
				&\mathcal{J}_{\phi_g^*}(\phi^{n+1} - \phi_g^*) = \mathcal{G}_\tau(\phi_g^*)\mathcal{J}_{\phi_g^*}(\phi^n - \phi_g^*) + o(\|\phi^n - \phi_g^*\|_{H^1})
			\end{aligned}\;.
			\right.
		\end{align*}
		Telescopic summation of the first relation gives
		\begin{align}
			\label{telescoping-identity}
			(I - \mathcal{J}_{\phi_g^*})(\phi^n - \phi_g^*)
			&= \sum_{k=n}^\infty (I - \mathcal{J}_{\phi_g^*})(\phi^k - \phi^{k+1}),
		\end{align}
		with $\| (I - \mathcal{J}_{\phi_g^*})(\phi^k - \phi^{k+1}) \|_{\mathcal P_{\phi_g^*}}
		\to 0$ as $\| \phi^k - \phi_g^* \|_{H^1}\to0$. In particular, for every $\sigma>0$ there exists $\delta_\sigma\to0$ as
		$\sigma\to0$ such that
		\begin{align*}
			\| (I - \mathcal{J}_{\phi_g^*})(\phi^k - \phi^{k+1}) \|_{\mathcal P_{\phi_g^*}} 
			\le \delta_\sigma \| \phi^k - \phi_g^* \|_{\mathcal P_{\phi_g^*}}
			\qquad\text{whenever }\| \phi^k - \phi_g^* \|_{H^1}\le\sigma.
		\end{align*}
		Hence, from the telescoping identity \eqref{telescoping-identity} and for $n$ large enough so that $\| \phi^k - \phi_g^* \|_{H^1}\le\sigma$ for all $k\ge n$, it follows that
		\begin{align}
			\label{estimate-telescoping-identity}
			\bigl\|(I-\mathcal J_{\phi_g^*})(\phi^n-\phi_g^*)\bigr\|_{\mathcal P_{\phi_g^*}}
			\le \delta_\sigma \sum_{k=n}^\infty \|\phi^k-\phi_g^*\|_{\mathcal P_{\phi_g^*}}.
		\end{align}
		
		Given the linear convergence of $\{\phi^n\}_{n\in\mathbb{N}}$, there exists $\rho \in (0,1)$ such that
		\[
		\|\phi^k - \phi_g^*\|_{\mathcal{P}_{\phi_g^*}} \le C \rho^{k-n}\|\phi^n - \phi_g^*\|_{\mathcal{P}_{\phi_g^*}}\quad\text{ for all}\;\; k\ge n.
		\]
		Consequently,
		\[
		{
			\bigl\|(I-\mathcal J_{\phi_g^*})(\phi^n-\phi_g^*)\bigr\|_{\mathcal P_{\phi_g^*}}
			\overset{\eqref{estimate-telescoping-identity}}{\le}  }
		\delta_{\sigma} \sum_{k=n}^\infty \|\phi^k - \phi_g^*\|_{\mathcal{P}_{\phi_g^*}}
		\le C \delta_{\sigma} \|\phi^n - \phi_g^*\|_{\mathcal{P}_{\phi_g^*}},
		\]
		where $ \delta_{\sigma}\to 0^+$ as $\sigma\to0^+$. It follows that
		\[
		(I - \mathcal{J}_{\phi_g^*})(\phi^n - \phi_g^*) = o(\|\phi^n - \phi_g^*\|_{H^1}),
		\]
		and hence
		\begin{align*}
			\phi^n - \phi_g^* &= \mathcal{J}_{\phi_g^*}(\phi^n - \phi_g^*) + o(\|\phi^n - \phi_g^*\|_{H^1})\\
			\mathcal{J}_{\phi_g^*}(\phi^{n+1} - \phi_g^*) &= \mathcal{G}_\tau(\phi_g^*)\mathcal{J}_{\phi_g^*}(\phi^n - \phi_g^*) + o\left(\|\mathcal{J}_{\phi_g^*}(\phi^n - \phi_g^*)\|_{H^1}\right)
		\end{align*}
		Therefore, the asymptotic convergence behavior of $\phi^n - \phi_g^*$ is entirely determined by its projected component $\mathcal{J}_{\phi_g^*}(\phi^n - \phi_g^*)$. By \textbf{Lemma~\ref{G-spectrum}}, the operator norm of $\mathcal{G}_\tau(\phi_g^*)$ restricted to $R_{\phi_g^*}$ is equal to $\max\{|1 - \tau \mu|,\, |1 - \tau L|\}$. For all $\phi^0 \in \mathcal{B}_\sigma(\phi_g)$ and $\tau \in (0, 2/(L+\varepsilon))$, the local $Q$-linear estimate holds
		\[
		\bigl\| \mathcal{J}_{\phi_g^*}(\phi^n - \phi_g^*) \bigr\|_{\mathcal{P}_{\phi_g^*}}
		\le  \bigl\| \mathcal{J}_{\phi_g^*}(\phi^{n-1} - \phi_g^*) \bigr\|_{\mathcal{P}_{\phi_g^*}}\bigl( \max\{ |1 - \tau \mu|,\, |1 - \tau L| \} + \varepsilon \bigr),
		\quad \forall\; n \ge 1.
		\]
		Combined with the decomposition $\phi^n - \phi_g^* = \mathcal{J}_{\phi_g^*}(\phi^n - \phi_g^*) + o(\|\phi^n - \phi_g^*\|_{H^1})$, this implies that
		\[
		\| \phi^n - \phi_g^* \|_{\mathcal{P}_{\phi_g^*}}
		\le \| \phi^{n-1} - \phi_g^* \|_{\mathcal{P}_{\phi_g^*}} \left( \max\{ |1 - \tau \mu|,\, |1 - \tau L| \} + \varepsilon \right),
		\quad \forall\; n \ge 1,
		\]
		for $\sigma$ sufficiently small (so the kernel component is negligible uniformly), possibly enlarging $\varepsilon$. In particular, the optimal contraction factor is attained when $\tau = 2/(L + \mu)$. In this case,
		\[
		\max\{ |1 - \tau \mu|,\, |1 - \tau L| \} = \frac{L - \mu}{L + \mu},
		\]
		and the optimal local $Q$-linear convergence rate is given by
		\[
		\| \phi^n - \phi_g^* \|_{\mathcal{P}_{\phi_g^*}}
		\le \| \phi^{n-1} - \phi_g^* \|_{\mathcal{P}_{\phi_g^*}} \left( \frac{L - \mu}{L + \mu} + \varepsilon \right),\quad\forall\;n\ge1.
		\]
	\end{proof}
	
	\begin{proof}[Proof of {\bf Theorem \ref{MB-QE}}]
		By \textbf{Lemma~\ref{Equal-L}}, \textbf{Lemma~\ref{P-L-In}}, and~\cite[Lemma~4.3]{2025OnBest}, the sufficiency is immediate. We now prove the necessity. By Taylor expansion, we have
		\begin{align*}
			E(\phi^{1}) 
			= E(\phi^0) - \tau \|d_0\|_{\mathcal{P}_{\phi^0}}^2 
			+ \frac{\tau^2}{2} \big\langle (E''(\phi^0) - \lambda_{\phi^0} \mathcal{I}) d_0, d_0 \big\rangle 
			+ o(\tau^2 \| d_0\|_{H^1}^2).
		\end{align*}
		Note that since $d_0=\nabla_{\mathcal{P}}^{\mathcal{R}}E(\phi^0)$, there exists a constant $C_d>0$ such that $\|d_0\|_{H^1} \le C_d\,\|\phi^0-\phi_g\|_{H^1}$. In particular, for $\phi^0\in\mathcal B_\sigma(\phi_g)$, we have $\|d_0\|_{H^1}\le C\,\sigma$. We obtain
		\begin{align*}
			E(\phi^0) 
			= E(\phi^{1}) + \tau \| d_0 \|_{\mathcal{P}_{\phi^0}}^2 
			- \frac{\tau^2}{2} \big\langle (E''(\phi^0) - \lambda_{\phi^0} \mathcal{I}) d_0, d_0 \big\rangle 
			+ o(\|d_0\|_{H^1}^2).
		\end{align*}
		By the continuity of  $ E''(\phi) - \lambda_{\phi} \mathcal{I} $ ,  $ \mathcal{P}_{\phi} $, and  $ \operatorname{Proj}^{\mathcal{P}_{\phi}}_{\phi} $  with respect to  $ \phi $, together with \textbf{Lemma~\ref{Equal-L}}, we get for all $d \in T_{\phi}\mathcal{M}$ (with small $\|d\|_{H^1}$) that
		\[
		\big| \big\langle (E''(\phi) - \lambda_{\phi} \mathcal{I}) d, d \big\rangle \big| 
		\le L\|d\|_{\mathcal{P}_{\phi}}^2+ o(\|d\|_{H^1}^2).
		\]
		Consequently, for every sufficiently small $\varepsilon>0$, there exists sufficiently small $\sigma > 0$ such that for all  $ \phi\in\mathcal{B}_{\sigma}(\phi_g) $,
		\begin{align*}
			E(\phi^0) - E(\phi^1) 
			&\le \left( \tau + \frac{\tau^2}{2}(L + \varepsilon) \right) \|d_0\|_{\mathcal{P}_{\phi^0}}^2.
		\end{align*}
		On the other hand, by the assumed  $Q$-linear convergence of the energy sequence, 
		we have
		\begin{align*}
			E(\phi^0) - E(\phi^1) 
			&= \big( E(\phi^0) - E(\phi_g) \big) - \big( E(\phi^1) - E(\phi_g) \big) \\
			&\ge \big( 1 - \rho- \varepsilon \big) \big( E(\phi^0) - E(\phi_g) \big).
		\end{align*}
		Combining the two estimates yields
		\begin{align*}
			E(\phi^0) - E(\phi_g) 
			&\le \frac{ \tau + \frac{\tau^2}{2}(L + \varepsilon) }{ 1 - \rho -\varepsilon} \, \|d_0\|_{\mathcal{P}_{\phi^0}}^2,
		\end{align*}
		which establishes the Polyak--\L ojasiewicz inequality. This, together with \textbf{Lemma~\ref{PL-MB}}, completes the proof of necessity. 
	\end{proof}
	\begin{proof}[Proof of {\bf Theorem \ref{MB-CL}}]
		The sufficiency follows immediately from \textbf{Theorem~\ref{finite-classified-thm}} and \textbf{Theorem~\ref{Opt-convergence}}. We now prove the necessity. 
		
		Assume that the well-defined classification holds and that the iterates generated by the P-RG  converge linearly to the ground state manifold $\mathcal{S}_g$.
		Then, for any ground state $\phi_g \in \mathcal{S}_g$, the sequence converges linearly in a neighborhood of $\phi_g$. 
		As in the beginning of the proof of \textbf{Theorem~\ref{Opt-convergence}}, one obtains
		\begin{align*}
			\phi^0 - \phi_g^* 
			= \mathcal{J}_{\phi_g^*}(\phi^0 - \phi_g^*) + o(\|\phi^0 - \phi_g^*\|_{H^1}) \quad \text{as } \sigma \to 0^+,
		\end{align*}
		here $\phi^0 \in \mathcal{B}_\sigma(\phi_g)$ and the limit point $\phi_g^* \in \mathcal{S}_{\phi_g}$ satisfies $\|\phi^0 - \phi_g'\|_{H^1} \le C\sigma$. 
		We remark that this expansion is identical to the one in \eqref{kernel-vanish}.
		Let $I_{\alpha}^{\beta}$ denote the linear group action corresponding to phase shifts and rotations. By the invariance of the energy functional $E$ under the action of the linear group, for all $\phi\in\mathcal{S}_{\phi_g}$, we have the equivariance of the tangent bundle and the second variation
		\begin{align*}
			I_{\alpha}^{\beta} \big( T_{\phi}\mathcal{M} \big) = T_{I_{\alpha}^{\beta}\phi}\mathcal{M}\quad\text{and}\quad 
			\big\langle (E''(I_{\alpha}^{\beta}\phi) - \lambda_{I_{\alpha}^{\beta}\phi}\mathcal{I}) I_{\alpha}^{\beta}v,\, I_{\alpha}^{\beta}v \big\rangle 
			= \big\langle (E''(\phi) - \lambda_{\phi}\mathcal{I}) v,\, v \big\rangle.
		\end{align*}
		Moreover, by \textbf{(A4)} and the fact that $\mathcal{S}_{\phi_g}$ is bounded in $H^1$, the norms
		$\|\cdot\|_{\mathcal{P}_\phi}$ and $\|\cdot\|_{H^1}$ are uniformly equivalent for $\phi\in\mathcal{S}_{\phi_g}$
		(and hence also in a sufficiently small neighborhood of the orbit). Therefore, coercivity of the
		quadratic form in $H^1$ transfers to a uniform coercivity bound in the $\mathcal{P}_\phi$-norm.
		Consequently, the Riemannian Hessian of $E$ is uniformly coercive on the subspace $R_{\phi}$ over the entire orbit $\mathcal{S}_{\phi_g}$. 
		By repeating the Taylor-expansion argument used in \textbf{Lemma~\ref{P-L-In}},
		together with the assumed linear convergence of $\{\phi^n\}_{n\in \mathbb{N}}$,
		we obtain the Polyak--\L{}ojasiewicz inequality in a neighborhood of $\phi_g$.
		Since the Polyak--\L{}ojasiewicz inequality is equivalent to the Morse--Bott
		condition, the necessity is established.
	\end{proof}
	
	\begin{proof}[Proof of {\bf Theorem \ref{sublinear-convergence}}]
		By \cite[Lemma~4.3]{2025OnBest}, which provides the upper bound $E''(\phi_g) - \lambda_{\phi_g} \mathcal{I}$ in the preconditioner-norm and does not require the Morse--Bott assumption for this estimate, together with \textbf{Lemma~\ref{Equal-L}}, we obtain for any critical point $\phi_s'$ and every sufficiently small $\varepsilon > 0$, there exists $\sigma > 0$ such that for all $\phi \in \mathcal{B}_{\sigma}(\phi_s')$, the local energy dissipation satisfies
		\begin{equation}\label{eq:descent}
			E(\phi^{n+1}) - E(\phi^n) \le - C_{\tau} \left\|\nabla_{\mathcal{P}}^{\mathcal{R}} E(\phi^n)\right\|^2_{\mathcal{P}_{\phi^n}},
		\end{equation}
		where $C_{\tau} = \tau - \frac{\tau^2}{2}(L + \varepsilon) > 0$ for all $\tau \in \bigl(0,\, 2/(L + \varepsilon)\bigr)$.
		
		Moreover, by \textbf{Proposition~\ref{Property-P}}-$(iv)$, we have
		\begin{align*}
			\|\phi^{n+1} - \phi^n\|_{\mathcal{P}_{\phi^n}}
			= \left\|\nabla_{\mathcal{P}}^{\mathcal{R}} E(\phi^n)\right\|_{\mathcal{P}_{\phi^n}} 
			+ o\!\left(\left\|\nabla_{\mathcal{P}}^{\mathcal{R}} E(\phi^n)\right\|_{\mathcal{P}_{\phi^n}}\right).
		\end{align*}
		Combining this with the local energy dissipation estimate \eqref{eq:descent}, we obtain
		\begin{align*}
			E(\phi^n) - E(\phi^{n+1})
			\ge C_{\tau} \left\|\nabla_{\mathcal{P}}^{\mathcal{R}} E(\phi^n)\right\|_{\mathcal{P}_{\phi^n}} 
			\|\phi^{n+1} - \phi^n\|_{\mathcal{P}_{\phi^n}}.
		\end{align*}
		Hence, all the conditions of \cite[Theorem~2.1]{2022Exp} are satisfied, and in the absence of the Morse--Bott condition on the energy functional $E$, the P-RG admits the local sublinear convergence rate
		\[
		\|\phi^n - \phi_s\|_{H^1} \le C n^{-\frac{\nu}{1 - 2\nu}}, \quad  \nu \in(0,1/2) ,\quad\forall\;n\ge0,
		\]
		for some \(\phi_s \in \mathrm{Crit}\,(E)\).
	\end{proof}

	\section{Conclusion}\label{Sec6}
In this work, we established a geometric characterization of the ground-state manifold of the Gross–Pitaevskii energy functional based on the Morse–Bott condition. This structure provides a precise link between symmetry-induced degeneracy and the local convergence behavior of preconditioned Riemannian gradient methods. In particular, we identified the Morse–Bott property as the exact criterion separating linear from sublinear convergence and showed that it leads to a finite classification of symmetry-generated minimizers. These results highlight that the convergence behavior of optimization methods is governed by the intrinsic geometry of the critical set rather than by algorithmic details.

	 \bibliographystyle{siamplain}

	\appendix
	\setcounter{equation}{0}
	\section{Proof of $\mathcal{L}_z\phi\in H_0^1(\mathcal{D})$}\label{Appendix_Lz_phi_g}
	In this appendix, for the local minimizer $\phi_g$, we prove that $\mathcal{L}_z \phi_g \in H_0^1(\mathcal{D})$. To this end, we establish the following lemma.
	\begin{lemma}
		Let $\mathcal{D}\subset\mathbb{R}^d$ ($d=2,3$) be a bounded $C^{1,1}$-domain that is rotationally symmetric about the
		$z$-axis, i.e., $A_\beta \mathcal{D}=\mathcal{D}$ for all $\beta\in\mathbb{R}$. Then for every
		$u\in H^2(\mathcal{D})\cap H^1_0(\mathcal{D})$ we have
		\begin{align*}
			\mathcal{L}_z u \in H^1_0(\mathcal{D}).
		\end{align*}
		In particular, assumption {\bf (A3)} holds whenever $\phi_g\in H^2(\mathcal{D})\cap H^1_0(\mathcal{D})$.
	\end{lemma}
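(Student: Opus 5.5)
The plan has two parts. First I will show $\mathcal{L}_z u\in H^1(\mathcal{D})$. Then I will show its trace vanishes on $\partial\mathcal{D}$. For the first part, write $\mathcal{L}_z u=-\ci(x\partial_y u-y\partial_x u)$. Since $u\in H^2(\mathcal{D})$, the derivatives $\partial_x u,\partial_y u$ lie in $H^1(\mathcal{D})$. The coefficients $x,y$ are smooth and bounded on the bounded domain $\mathcal{D}$. The product rule then gives
\[
\partial_j(x\partial_y u-y\partial_x u)=\delta_{j1}\partial_y u-\delta_{j2}\partial_x u+x\partial_j\partial_y u-y\partial_j\partial_x u\in L^2(\mathcal{D}),
\]
so $\mathcal{L}_z u\in H^1(\mathcal{D})$ with $\|\mathcal{L}_z u\|_{H^1}\le C\|u\|_{H^2}$.

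The main obstacle is the boundary condition. Since $\partial\mathcal{D}$ is $C^{1,1}$, $H^1_0(\mathcal{D})$ is exactly the kernel of the trace operator, so it suffices to show $\gamma(\mathcal{L}_z u)=0$. The key geometric observation is that the rotation vector field $X:=(-y,x,0)^\top$ (or $(-y,x)^\top$ for $d=2$) is tangent to $\partial\mathcal{D}$. Indeed, $A_\beta\mathcal{D}=\mathcal{D}$ for all $\beta$, so the flow $\beta\mapsto A_\beta\bm{x}$ of $X$ preserves $\partial\mathcal{D}$. Differentiating at $\beta=0$ gives $X(\bm{x})\cdot\bm{n}(\bm{x})=0$ for a.e. $\bm{x}\in\partial\mathcal{D}$, where $\bm{n}$ is the outward normal, which exists a.e. and is Lipschitz for a $C^{1,1}$ boundary. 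Also $\mathcal{L}_z u=-\ci\,X\cdot\nabla u$. For $u\in H^2\cap H^1_0$, the trace of $\nabla u$ on $\partial\mathcal{D}$ is purely normal, $\gamma(\nabla u)=(\partial_{\bm n}u)\,\bm{n}$, because the tangential derivatives of $\gamma u=0$ vanish. Hence
\[
\gamma(\mathcal{L}_z u)=-\ci\,X\cdot\gamma(\nabla u)=-\ci\,(\partial_{\bm n}u)\,X\cdot\bm{n}=0.
\]

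To make the step $\gamma(\nabla u)=(\partial_{\bm n}u)\bm n$ rigorous, I would use density. For $C^{1,1}$ domains, $C^{2}(\overline{\mathcal{D}})$ functions vanishing on $\partial\mathcal{D}$ are dense in $H^2\cap H^1_0$ in the $H^2$ norm. One way to see this is to localize with boundary-flattening $C^{1,1}$ charts, which preserve $H^2$, and mollify tangentially in the flattened coordinates. For such smooth $u$ the identity is classical: the tangential derivatives of $u|_{\partial\mathcal{D}}\equiv0$ vanish. The map $u\mapsto\gamma(X\cdot\nabla u)$ is continuous $H^2\to L^2(\partial\mathcal{D})$, so the identity passes to the limit. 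Alternatively, one can avoid the density argument. Observe directly that $u\circ A_\beta\in H^1_0(\mathcal{D})$ for each $\beta$ by rotational invariance of $\mathcal{D}$. The difference quotients $(u\circ A_\beta-u)/\beta$ converge to $X\cdot\nabla u=\ci\mathcal{L}_z u$ in $H^1$ as $\beta\to0$, which uses $u\in H^2$. Since $H^1_0(\mathcal{D})$ is closed, the limit lies in it. I would present this second argument as the main proof, since it uses only the symmetry $A_\beta\mathcal{D}=\mathcal{D}$ and standard $H^1$-continuity of difference quotients for $H^2$ functions.

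For the final claim, I would use the fact that the local minimizer satisfies the Euler--Lagrange equation $\mathcal{H}_{\phi_g}\phi_g=\lambda_{\phi_g}\phi_g$. Under \textbf{(A2)}--\textbf{(A3)} and the $C^{1,1}$ boundary, elliptic $H^2$-regularity gives $\phi_g\in H^2(\mathcal{D})\cap H^1_0(\mathcal{D})$. The first-order term $\Omega\mathcal{L}_z\phi_g$ can be handled in a bootstrap, since it lies in $L^2$ once $\phi_g\in H^1$. Applying the lemma then yields $\mathcal{L}_z\phi_g\in H^1_0(\mathcal{D})$.
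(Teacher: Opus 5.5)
Your proposal is correct, and your main argument takes a genuinely different route from the paper. The paper argues exactly as in your first (fallback) route. Invariance $A_\beta(\partial\mathcal{D})=\partial\mathcal{D}$ makes $X=(-y,x,0)^\top$ tangent to $\partial\mathcal{D}$, so $(x\partial_y-y\partial_x)u$ is a tangential derivative. Since $u|_{\partial\mathcal{D}}=0$ with trace in $H^{3/2}(\partial\mathcal{D})$, its trace vanishes. The paper leaves implicit that the trace of $X\cdot\nabla u$ equals the tangential derivative of the trace of $u$; your density step is what makes this rigorous.

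Your preferred argument is different: $u\circ A_\beta\in H^1_0(\mathcal{D})$, the rotational difference quotients converge in $H^1$ to $X\cdot\nabla u=\ci\mathcal{L}_z u$, and $H^1_0(\mathcal{D})$ is closed. This uses neither trace theory, nor the normal field, nor the $C^{1,1}$ regularity of $\partial\mathcal{D}$; only $A_\beta\mathcal{D}=\mathcal{D}$ enters. It also proves more. For $u\in H^1(\mathcal{D})$ one has $\beta^{-1}(u\circ A_\beta-u)=\beta^{-1}\int_0^\beta (X\cdot\nabla u)\circ A_s\,\mathrm{d}s$. This holds pointwise for smooth $u$ because the orbit $s\mapsto A_s\bm{x}$ stays in $\mathcal{D}$, and extends to $H^1$ by density. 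Combine it with strong continuity of $s\mapsto g\circ A_s$ in $H^1(\mathcal{D})$, which holds because composition with $A_s$ is an $H^1$-isometry. This is the one-line justification you should supply for the convergence step. It yields $\mathcal{L}_z u\in H^1_0(\mathcal{D})$ for every $u\in H^1_0(\mathcal{D})$ with $\mathcal{L}_z u\in H^1(\mathcal{D})$, on any bounded rotation-invariant open set. What the paper's route offers instead is the geometric picture: tangential derivatives of zero boundary data vanish.

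One small inaccuracy in your fallback route concerns a boundary that is only $C^{1,1}$, which can occur for solids of revolution when $d=3$. There, $C^2(\overline{\mathcal{D}})$ functions vanishing on $\partial\mathcal{D}$ need not be dense in $H^2(\mathcal{D})\cap H^1_0(\mathcal{D})$: the implicit function theorem forces their gradients to vanish wherever $\partial\mathcal{D}$ fails to be $C^2$. However, $C^1(\overline{\mathcal{D}})$ approximants vanishing on $\partial\mathcal{D}$ are what your chart-and-mollify construction actually produces, and they already give $\gamma(\nabla u)=(\partial_n u)\,n$. You read the reference to \textbf{(A3)} in the final claim as the inclusion $\mathcal{L}_z\phi_g\in H^1_0(\mathcal{D})$ and invoke elliptic $H^2$-regularity; this matches what the paper does after the lemma.
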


	\begin{proof}
		Since $u\in H^2(\mathcal{D})$ and $\mathcal{L}_z u = -\ci (x\partial_y u - y\partial_x u)$, we immediately have $\mathcal{L}_z u \in H^1(\mathcal{D})$.
		
		Because $A_\beta \mathcal{D}=\mathcal{D}$ for all $\beta$ and $A_\beta$ is a homeomorphism of $\mathbb{R}^d$, we have
		$A_\beta(\partial\mathcal{D})=\partial\mathcal{D}$. Hence, for every
		$\boldsymbol{x}\in\partial\mathcal{D}$ the curve $\beta\mapsto A_\beta\boldsymbol{x}$ lies in
		$\partial\mathcal{D}$. Differentiating with respect to $\beta$ at $\beta=0$ yields
		\begin{align*}
			0
			= \left.\frac{d}{d\beta} A_\beta \boldsymbol{x} \right|_{\beta=0}\cdot n(\boldsymbol{x})
			= (-y,\,x,\,0)\cdot n(\boldsymbol{x})
			\qquad \text{for a.e. } \boldsymbol{x}=(x,y,z)\in\partial\mathcal{D}.
		\end{align*}
		Consequently, the vector field $(-y,\,x,\,0)$ is tangential to $\partial\mathcal{D}$. Since
		\begin{align*}
			(x\partial_y - y\partial_x)u = (-y,\,x,\,0)\cdot\nabla u,
		\end{align*}
		this operator represents differentiation in a tangential direction along $\partial\mathcal{D}$.
		Because $\mathcal{D}$ is a $C^{1,1}$-domain and $u\in H^2(\mathcal{D})$, the trace
		$u|_{\partial\mathcal{D}}$ belongs to $H^{3/2}(\partial\mathcal{D})$, and tangential derivatives admit traces in
		$H^{1/2}(\partial\mathcal{D})$. Since $u\in H^1_0(\mathcal{D})$, we have
		$u|_{\partial\mathcal{D}}=0$, and therefore all tangential derivatives vanish on the boundary. In particular, $\big((x\partial_y - y\partial_x)u\big)\big|_{\partial\mathcal{D}} = 0$ in the sense of traces.
		Hence $\mathcal{L}_z u \in H^1(\mathcal{D})$ has vanishing trace on $\partial\mathcal{D}$, which shows that $\mathcal{L}_z u \in H^1_0(\mathcal{D})$.
	\end{proof}
	
	Finally, since the local minimizer $\phi_g$ satisfies the Euler--Lagrange equation
	\[
	\mathcal{H}_{\phi_g} \phi_g = \lambda_{\phi_g} \mathcal{I} \phi_g,
	\]
	standard elliptic regularity theory implies that $\phi_g \in H^2(\mathcal{D}) \cap H_0^1(\mathcal{D})$, provided $\mathcal{D}$ is $C^{1,1}$ and the nonlinearity satisfies the regularity assumptions in {\bf (A3)}. The precise regularity argument is e.g. ellobrated in \cite[Lem.~2.5]{2025Regularity}. Hence, by the lemma above, we conclude that
	$\mathcal{L}_z \phi_g \in H_0^1(\mathcal{D})$.

	\section{Proof of {\bf Property } \ref{Prop2}}\label{Appendix_Pro}
	\begin{proof}
		The coercivity at  $ \phi_g $  is identical to that established in \cite[Proposition~2.2]{2025OnBest}. We now prove uniform coercivity in a neighborhood of  $ \phi_g $. 
		
		Consider the restriction of the $L^2$-orthogonal projection operator (also known as vector transport):
		\begin{align}\label{Vec-Trans}
			T_{\phi_g}^{\phi_g'} := \operatorname{Proj}^{L^2}_{\phi_g'}\big|_{T_{\phi_g}\mathcal{M}}.
		\end{align}
		Since $\phi_g' \in \mathcal{B}_{\sigma}(\phi_g)$, we have
		\begin{align*}
			|(\phi_g',\phi_g)_{L^2}| = \left|1 - \frac{1}{2}\|\phi_g' - \phi_g\|_{L^2}^2\right| \ge C > 0
		\end{align*}
		for some constant $C$ independent of $\phi_g'$, provided $\sigma > 0$ is sufficiently small. To show that $T_{\phi_g}^{\phi_g'}$ is bijective, consider the equation
		\begin{align*}
			T_{\phi_g}^{\phi_g'}(v) = w, \quad v \in T_{\phi_g}\mathcal{M},\; w \in T_{\phi_g'}\mathcal{M}.
		\end{align*}
		This is equivalent to finding a scalar $x \in \mathbb{R}$ such that
		$
			v = w + x \phi_g' \in T_{\phi_g}\mathcal{M}.
		$
		Imposing the condition $(v, \phi_g)_{L^2} = 0$ yields
		$
			(w, \phi_g)_{L^2} + x (\phi_g', \phi_g)_{L^2} = 0,
		$
		which has a solution
		\begin{align*}
			x = -\frac{(w, \phi_g)_{L^2}}{(\phi_g', \phi_g)_{L^2}}.
		\end{align*}
		To prove uniqueness, suppose $v = w + x \phi_g'$ and $v_1 = w + x_1 \phi_g'$ both belong to $T_{\phi_g}\mathcal{M}$. Then $v - v_1 = (x - x_1)\phi_g' \in T_{\phi_g}\mathcal{M}$, so
		$
			(x - x_1)(\phi_g', \phi_g)_{L^2} = 0.
		$
		Since $(\phi_g', \phi_g)_{L^2} \neq 0$, it follows that $x = x_1$, and thus $v = v_1$. Therefore, $T_{\phi_g}^{\phi_g'}$ is bijective, and its inverse is given explicitly by
		\begin{align}\label{Invers-VT}
			(T_{\phi_g}^{\phi_g'})^{-1}(w) = w - \frac{(w, \phi_g)_{L^2}}{(\phi_g', \phi_g)_{L^2}}\, \phi_g', \quad \forall\, w \in T_{\phi_g'}\mathcal{M}.
		\end{align}
		Combined with the boundedness of $T_{\phi_g}^{\phi_g'}$ (its inverse is also bounded), it is a linear homeomorphism between $T_{\phi_g}\mathcal{M}$ and $T_{\phi_g'}\mathcal{M}$ for all $\phi_g' \in \mathcal{B}_\sigma(\phi_g)$ with $\sigma > 0$ sufficiently small.
		
		Using the continuity of $ E''(\phi)-\lambda_{\phi}\mathcal{I} $ and the  $L^2 $-projection  $ \operatorname{Proj}^{L^2}_{\phi} $  with respect to  $ \phi $, we deduce that  $  E''(\phi)-\lambda_{\phi}\mathcal{I}  $  is coercive on the closed subspace  $ T_{\phi_g}^{\phi_g'}(R_{\phi_g}) \subset T_{\phi_g'}\mathcal{M} $. More precisely, for some constant $\delta_{\sigma}$ with $\delta_{\sigma}\to 0^+$ for $\sigma\to 0^+$, we have 
		\begin{align}\label{Uniform-co}
			\nonumber\Big\langle (E''(\phi_g')-\lambda_{\phi_g'}\mathcal{I}) \, T_{\phi_g}^{\phi_g'} v,\, T_{\phi_g}^{\phi_g'} v \Big\rangle
			&\ge \Big\langle (E''(\phi_g)-\lambda_{\phi_g}\mathcal{I}) v, v \Big\rangle - \delta_{\sigma}\big\|T_{\phi_g}^{\phi_g'} v\big\|_{H^1}^2 \\
			&\ge C \, \|v\|_{H^1}^2 - \delta_{\sigma}\big\|T_{\phi_g}^{\phi_g'} v\big\|_{H^1}^2 \ge C \, \big\|T_{\phi_g}^{\phi_g'} v\big\|_{H^1}^2, \quad\forall\;   v \in R_{\phi_g},
		\end{align}
		for $\sigma$ sufficiently small. 
		This implies that
		$
		K_{\phi_g'} \cap T_{\phi_g}^{\phi_g'}(R_{\phi_g}) = \{0\}.
		$
		
		We now prove that the projection operator $\mathcal{J}_{\phi_g'}$ is a bijection from $ T_{\phi_g}^{\phi_g'}(R_{\phi_g})$ to $R_{\phi_g'}$. Since $T_{\phi_g}^{\phi_g'}$ is a linear homeomorphism and  $R_{\phi_g} \subset T_{\phi_g}\mathcal{M} $ is closed, it follows that $ T_{\phi_g}^{\phi_g'}(R_{\phi_g}) $  is closed in $T_{\phi_g'}\mathcal{M} $  and 
		\[
		\dim K_{\phi_g}=\operatorname{codim}_{T_{\phi_g}\mathcal{M}} R_{\phi_g}
		= \operatorname{codim}_{T_{\phi_g'}\mathcal{M}} T_{\phi_g}^{\phi_g'}(R_{\phi_g})=\dim(T_{\phi_g}^{\phi_g'}(R_{\phi_g}))^{\perp}_{\mathcal{P}_{\phi_g'}} .
		\]
		Because  $ K_{\phi_g'}$  is finite-dimensional and  $T_{\phi_g}^{\phi_g'}(R_{\phi_g})$ is closed,  $ T_{\phi_g}^{\phi_g'}(R_{\phi_g})+K_{\phi_g'} $  is a closed subspace of  $T_{\phi_g'}\mathcal{M}$.  
		Its orthogonal complement is
		\[
		(T_{\phi_g}^{\phi_g'}(R_{\phi_g})+K_{\phi_g'} )^{\perp}_{\mathcal{P}_{\phi_g'}} = 	(T_{\phi_g}^{\phi_g'}(R_{\phi_g}))^{\perp}_{\mathcal{P}_{\phi_g'}} \cap R_{\phi_g'}.
		\]
		We compute dimensions
		\begin{align*}
			\dim (T_{\phi_g}^{\phi_g'}(R_{\phi_g}))^{\perp}_{\mathcal{P}_{\phi_g'}} &= \dim((T_{\phi_g}^{\phi_g'}(R_{\phi_g}))^{\perp}_{\mathcal{P}_{\phi_g'}} \cap R_{\phi_g'}) + \dim((T_{\phi_g}^{\phi_g'}(R_{\phi_g}))^{\perp}_{\mathcal{P}_{\phi_g'}} \cap K_{\phi_g'}),\\
			\dim K_{\phi_g'} &= \dim(K_{\phi_g'} \cap T_{\phi_g}^{\phi_g'}(R_{\phi_g})) + \dim(K_{\phi_g'} \cap (T_{\phi_g}^{\phi_g'}(R_{\phi_g}))^{\perp}_{\mathcal{P}_{\phi_g'}}).
		\end{align*}
		By $K_{\phi_g'} \cap T_{\phi_g}^{\phi_g'}(R_{\phi_g}) = \{0\}$, we have $ \dim K_{\phi_g'} = \dim\big(K_{\phi_g'} \cap (T_{\phi_g}^{\phi_g'}(R_{\phi_g}))^{\perp}_{\mathcal{P}_{\phi_g'}}\big)$.  
		Substituting into the first equation and using  $\dim K_{\phi_g'} = \dim K_{\phi_g}=\dim(T_{\phi_g}^{\phi_g'}(R_{\phi_g}))^{\perp}_{\mathcal{P}_{\phi_g'}}$, we obtain
		\[
		\dim((T_{\phi_g}^{\phi_g'}(R_{\phi_g}))^{\perp}_{\mathcal{P}_{\phi_g'}} \cap R_{\phi_g'}) = 0.
		\]
		Therefore,  
		\[
		\overline{T_{\phi_g}^{\phi_g'}(R_{\phi_g}) + K_{\phi_g'}} = T_{\phi_g}^{\phi_g'}(R_{\phi_g}) + K_{\phi_g'}=T_{\phi_g'}\mathcal{M}.
		\]
		Then, for all $u\in R_{\phi_g'}$, there exist unique  $ v \in T_{\phi_g}^{\phi_g'}(R_{\phi_g})$  and  $ w \in K_{\phi_g'}$  such that $u=v+w$.
		Applying the orthogonal projection  $ \mathcal{J}_{\phi_g'} $ yields
		$
		u = \mathcal{J}_{\phi_g'}u = \mathcal{J}_{\phi_g'}v + \mathcal{J}_{\phi_g'}w =  \mathcal{J}_{\phi_g'}v.
		$
		Thus, $ \mathcal{J}_{\phi_g'} $ is a linear isomorphism from  $ T_{\phi_g}^{\phi_g'}(R_{\phi_g}) $  to  $ R_{\phi_g'} $, and consequently the composition operator $  \mathcal{J}_{\phi_g'} T_{\phi_g}^{\phi_g'}  $  is a linear isomorphism from  $  R_{\phi_g}  $  to  $  R_{\phi_g'}  $.
		Finally, the coercivity estimate transfers to  $ R_{\phi_g'} $: there exists a constant  $ C > 0 $, independent of  $ \phi_g' \in \mathcal{B}_\sigma(\phi_g)$, such that
		\begin{align*}
			\Big\langle (E''(\phi_g')-\lambda_{\phi_g'}\mathcal{I}) \mathcal{J}_{\phi_g'} T_{\phi_g}^{\phi_g'} v,\, \mathcal{J}_{\phi_g'}T_{\phi_g}^{\phi_g'} v \Big\rangle&=\Big\langle (E''(\phi_g')-\lambda_{\phi_g'}\mathcal{I}) \, T_{\phi_g}^{\phi_g'} v,\, T_{\phi_g}^{\phi_g'} v \Big\rangle\\
			&\ge C \, \big\|T_{\phi_g}^{\phi_g'} v\big\|_{\mathcal{P}_{\phi_g'}}^2\ge C \, \big\|\mathcal{J}_{\phi_g'}T_{\phi_g}^{\phi_g'} v\big\|_{H^1}^2 , \quad\forall\;   v \in R_{\phi_g}.
		\end{align*}
	\end{proof}

\end{document}